\newcommand\hide[1]{\commented{gray}{Hidden:}{#1}}
\renewcommand\hide[1]\empty
\newcommand\blfootnote[1]{%
  \begingroup
  \renewcommand\thefootnote{}\footnote{#1}%
  \addtocounter{footnote}{-1}%
  \endgroup
}
\begin{document}

\author{Denis~I.~Saveliev%
\blfootnote{
This research was partially supported 
by the MSHE project No.~FSMG-2024-0048.
}
\blfootnote{
\textit{Mathematics Subject Classification~2010}: 
Primary: 
14A22, 
17A01, 
54D80, 
54H13; 
Secondary: 
03C05, 
03C15, 
03C60, 
05D10, 
05E15, 
22A22, 
22A30, 
54D35, 
54F45, 
54H11. 
}
}
\title{Hindman's finite sums theorem and \\
its application to topologizations of algebras}
\date{}

\maketitle

\theoremstyle{plain}
\newtheorem{thm}{Theorem}[subsection]
\newtheorem*{tm}{Theorem}

\newtheorem{lmm}[thm]{Lemma}
\newtheorem*{lm}{Lemma}

\newtheorem{coro}[thm]{Corollary}
\newtheorem*{cor}{Corollary}

\theoremstyle{definition}
\newtheorem{dfn}[thm]{Definition}
\newtheorem*{df}{Definition}

\newtheorem{exmp}[thm]{Example}
\newtheorem*{exm}{Example}

\newtheorem{exmps}[thm]{Examples}
\newtheorem*{exms}{Examples}

\newtheorem{ntt}[thm]{Notation}
\newtheorem*{nt}{Notation}

\newtheorem{fact}[thm]{Fact}
\newtheorem*{fct}{Fact}

\newtheorem{facts}[thm]{Facts}
\newtheorem*{fcts}{Facts}

\newtheorem{prbl}[thm]{Problem}
\newtheorem*{prb}{Problem}

\newtheorem{rmrk}[thm]{Remark}
\newtheorem*{rmk}{Remark}

\newtheorem{rmrks}[thm]{Remarks}
\newtheorem*{rmks}{Remarks}

\newtheorem{qn}[thm]{Question}
\newtheorem*{q}{Question}

\newtheorem{qns}[thm]{Questions}
\newtheorem*{qs}{Questions}

\newcommand{\Fr}{\mathop{\mathrm {Fr}}\nolimits}
\newcommand{\ind}{\mathop{\mathrm {ind}}\nolimits}
\newcommand{\Ind}{\mathop{\mathrm {Ind}}\nolimits}

\newcommand{\spec}{\mathop{\mathrm {spec}}\nolimits}
\newcommand{\specm}{\mathop{\mathrm {specm}}\nolimits}
\newcommand{\fix}{\mathop{\mathrm {fix}}\nolimits}
\newcommand{\pcf}{\mathop{\mathrm {pcf}}\nolimits}
\newcommand{\crit}{\mathop{\mathrm {crit}}\nolimits}

\newcommand{\dom}{ {\mathop{\mathrm {dom\,}}\nolimits} }
\newcommand{\ran}{ {\mathop{\mathrm{ran\,}}\nolimits} }
\newcommand{\cf}{ {\mathop{\mathrm {cf\,}}\nolimits} }
\newcommand{\club}{ {\mathop{\mathrm {club}}\nolimits} }
\newcommand{\cl}{ {\mathop{\mathrm {cl\,}}\nolimits} }
\newcommand{\cof}{ {\mathop{\mathrm {cof\,}}\nolimits} }
\newcommand{\add}{ {\mathop{\mathrm {add\,}}\nolimits} }
\newcommand{\sat}{ {\mathop{\mathrm {sat\,}}\nolimits} }
\newcommand{\tc}{ {\mathop{\mathrm {tc\,}}\nolimits} }
\newcommand{\unif}{ {\mathop{\mathrm {unif\,}}\nolimits} }
\newcommand{\pr}{ {\mathop{\mathrm {pr\/}}\nolimits} }

\newcommand{\uhr}{\!\upharpoonright\!}
\newcommand{\lra}{ {\leftrightarrow} }
\newcommand{\ot}{ {\mathop{\mathrm {ot\,}}\nolimits} }
\newcommand{\ol}{\overline}
\newcommand{\cnc}{ {^\frown} }
\newcommand{\image}{\/``\,}
\renewcommand{\bm}{\boldsymbol}
\newcommand{\scc}{\bm\upbeta}

\newcommand{\FS}{ {\mathop{\mathrm {FS\,}}\nolimits} }
\newcommand{\FP}{ {\mathop{\mathrm {FP\,}}\nolimits} }
\newcommand{\Lim}{\mathrm{Lim}}
\newcommand{\Reg}{\mathrm{Reg}}
\newcommand{\RK}{ {\mathrm {RK}} }

\newcommand{\ACA}{ {\mathrm {ACA}} }
\newcommand{\RCA}{ {\mathrm {RCA}} }

\newcommand{\CA}{ {\mathrm {CA}} }
\newcommand{\PC}{ {\mathrm {PC}} }
\newcommand{\PRA}{ {\mathrm {PRA}} }
\newcommand{\T}{ {\mathrm T} }
\newcommand{\TA}{ {\mathrm {TA}} }
\newcommand{\PA}{ {\mathrm {PA}} }
\newcommand{\KP}{ {\mathrm {KP}} }
\newcommand{\Z}{ {\mathrm Z} }
\newcommand{\ZF}{ {\mathrm {ZF}} }
\newcommand{\ZFA}{ {\mathrm {ZFA}} }
\newcommand{\ZFC}{ {\mathrm {ZFC}} }
\newcommand{\AC}{ {\mathrm {AC}} }
\newcommand{\GC}{ {\mathrm {GC}} }
\newcommand{\DC}{ {\mathrm {DC}} }
\newcommand{\AD}{ {\mathrm {AD}} }
\newcommand{\CH}{ {\mathrm {CH}} }
\newcommand{\GCH}{ {\mathrm {GCH}} }
\newcommand{\APr}{ {\mathrm {APr}} }
\newcommand{\ASp}{ {\mathrm {ASp}} }

\begin{abstract}
The first part of the paper is a~brief overview of 
Hindman's finite sums theorem, its prehistory and a~few 
of its further generalizations, and a~modern technique 
used in proving these and similar results, which is based 
on idempotent ultrafilters in ultrafilter extensions 
of semigroups. The second, main part of the paper is 
devoted to the topologizability problem of a~wide class 
of algebraic structures called polyrings; this class 
includes Abelian groups, rings, modules, algebras over 
a~ring, differential rings, and others. We show that 
the Zariski topology of such an algebra is always 
non-discrete. Actually, a~much stronger fact holds: if 
$K$~is an infinite polyring, $n$~a~natural number, and 
a~map~$F$ of $K^n$ into~$K$ is defined by a~term in 
$n$~variables, then $F$~is a~closed nowhere dense subset 
of the space~$K^{n+1}$ with its Zariski topology. 
In particular, $K^n$~is a~closed nowhere dense subset 
of~$K^{n+1}$. The proof essentially uses a~multidimensional 
version of Hindman's finite sums theorem.  
The third part of the paper lists some problems 
concerning topologization of various algebraic structures, 
their Zariski topologies, and related questions. 

\hide{
This paper is an extended version of the lecture at 
{\it Journ{\'e}es sur les Arithm{\'e}tiques Faibles~36: 
{\`a}~l'occasion du 70{\`e}me anniversaire de Yuri 
Matiyasevich\/}, delivered on 7th July, 2017, 
in Saint Petersburg.
}

\end{abstract}


\section{Ramsey theory of finite sums}

In this section we shortly recall some basic things
related to the famous Hindman finite sums theorem, 
including some historic information, a~modern technique 
used in proving results of such kind, which is based 
on idempotent ultrafilters in ultrafilter extensions 
of semigroups. Then we formulate a~couple of 
generalizations of the theorem, one of which will 
be essential for the proof of our main result 
in the next section of this paper.

\subsection{Algebraic Ramsey theory}

As well-known, {\it Ramsey theory\/} is a~vast area 
having various aspects including purely combinatorial 
and set-theoretic; for general information we refer 
the reader to the classical textbooks 
\cite{Graham et al} and (from a~more set-theoretic 
perspective)~\cite{Erdos et al}. Informally speaking, 
a~statement can be considered as Ramsey-theoretic iff
it~has the form ``Any small partition of a~large 
structure has a~large part.''

In this paper, we shall be interested in {\it infinite\/} 
Ramsey theory, where the weakest meaning of ``small'' 
naturally is ``finite''. An easy observation (see 
e.g.~\cite{Hindman Strauss}, Theorem~5.7) is that 
in this case, ``large'' can be always understood as 
``ultrafilter large'':

\begin{thm}\label{Ramsey}
Let $X$ be a~set and $\mathcal A$ a~family of its subsets. 
The following are equivalent:
\begin{itemize}
\setlength\itemsep{-0.39em}
\item[(i)] 
any finite partition of~$X$ has a~part~$S$ such~that 
there is $A\in\mathcal A$ with $A\subseteq S$;
\item[(ii)] 
there exists an ultrafilter~$u$ over~$X$ such that 
for any $S\in u$ there is $A\in\mathcal A$ with $A\subseteq S$. 
\end{itemize}
\end{thm}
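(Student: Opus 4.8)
The plan is to prove (ii)$\Rightarrow$(i) directly and (i)$\Rightarrow$(ii) by a compactness/Zorn argument on the space of ultrafilters. For the easy direction, assume $u$ is an ultrafilter as in~(ii) and let $X = S_1 \cup \dots \cup S_k$ be a finite partition. Since $u$ is an ultrafilter, exactly one $S_i$ lies in~$u$ (the $S_i$ are pairwise disjoint and their union is~$X\in u$, so by finite additivity of the associated $\{0,1\}$-measure one of them is in~$u$). Apply the hypothesis to this $S_i\in u$ to get $A\in\mathcal A$ with $A\subseteq S_i$; this $S_i$ is the desired part. This direction uses nothing about~$\mathcal A$ beyond the statement itself.

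For the hard direction (i)$\Rightarrow$(ii), the natural approach is to find~$u$ as a maximal element, under inclusion, of the family
\[
\mathcal F=\bigl\{\,\mathcal G : \mathcal G \text{ is a filter over } X \text{ and every } S\in\mathcal G \text{ contains some } A\in\mathcal A\,\bigr\},
\]
or, more cleanly, to work with the closed subset
\[
C=\bigl\{\,u\in\beta X : (\forall S\in u)(\exists A\in\mathcal A)\,A\subseteq S\,\bigr\}
\]
of the Stone--\v Cech compactification~$\beta X$ (the space of ultrafilters over~$X$), and show $C\neq\varnothing$. One checks that $C$ is closed in~$\beta X$: its complement consists of those~$u$ that contain some $S$ with $S\supseteq A$ for no $A\in\mathcal A$, i.e.\ $C^{\,c}=\bigcup\{\,\widehat S : S\subseteq X,\ (\forall A\in\mathcal A)\,A\not\subseteq S\,\}$, a union of basic open sets. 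So it suffices to produce one point of~$C$, equivalently a proper filter all of whose members contain a member of~$\mathcal A$; any ultrafilter extending it then lies in~$C$.

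To build such a filter, consider the family $\mathcal B=\{\,X\setminus S : S\subseteq X,\ (\forall A\in\mathcal A)\,A\not\subseteq S\,\}$ of "small complements"; I claim $\mathcal B$ has the finite intersection property, whence it generates a proper filter, and any member of the generated filter — being a superset of a finite intersection $(X\setminus S_1)\cap\dots\cap(X\setminus S_k)=X\setminus(S_1\cup\dots\cup S_k)$ — contains some $A\in\mathcal A$. Indeed, if $(X\setminus S_1)\cap\dots\cap(X\setminus S_k)=\varnothing$ then $S_1,\dots,S_k$ cover~$X$; refining to a genuine partition and invoking hypothesis~(i) yields a part, hence one of the $S_j$, containing some $A\in\mathcal A$, contradicting the defining property of~$\mathcal B$. (If $\mathcal A=\varnothing$ or $\varnothing\in\mathcal A$ the statement degenerates and both sides are handled trivially; if $X\notin\mathcal A$-coverable by a single trivial class one still gets the FIP from the empty intersection case, since $\varnothing$ covering~$X$ forces $X$ itself to be that unique part.) Extend this filter to an ultrafilter~$u$ by Zorn's lemma (or the Boolean prime ideal theorem); then every $S\in u$ fails to lie in the "small" collection, i.e.\ contains some $A\in\mathcal A$, giving~(ii).

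The main obstacle is purely bookkeeping rather than conceptual: verifying the finite intersection property of~$\mathcal B$ requires passing from a finite \emph{cover} $\{S_1,\dots,S_k\}$ of~$X$ to a finite \emph{partition} (replace $S_j$ by $S_j\setminus(S_1\cup\dots\cup S_{j-1})$) and observing that the part delivered by~(i) is contained in the original $S_j$ it came from, so that $A\subseteq S_j$ as needed — together with care about the edge cases (empty $\mathcal A$, whether one insists the filter be proper). No deep topology is needed; the Stone-space language above is only a convenient packaging of the Zorn's-lemma extension, and the whole argument is finitary plus one application of the ultrafilter lemma.
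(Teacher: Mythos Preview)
The paper does not supply its own proof of this theorem; it simply labels it ``an easy observation'' and cites \cite{Hindman Strauss}, Theorem~5.7. Your argument is correct and is essentially the standard one: the family of ``$\mathcal A$-large'' sets (those containing some $A\in\mathcal A$) is, by~(i), partition regular and contains~$X$, and such a family always supports an ultrafilter---which you produce by showing that the complements of the non-large sets have the finite intersection property and then extending via the ultrafilter lemma.

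One minor expositional wobble: the sentence asserting that ``any member of the generated filter \ldots\ contains some $A\in\mathcal A$'' is stronger than what your FIP paragraph actually proves (you only argue that the finite intersections are nonempty, not that they contain a member of~$\mathcal A$). The stronger claim is true---augment the cover $S_1,\ldots,S_k$ by the extra cell $X\setminus(S_1\cup\cdots\cup S_k)$ and apply~(i) to that partition---but in any case it is unnecessary: your final step (if $S\in u$ contained no $A\in\mathcal A$ then $X\setminus S\in\mathcal B\subseteq u$, contradiction) is correct and does not depend on it.
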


Furthermore, we shall discuss here an {\it algebraic\/} 
aspect of infinite Ramsey theory, where (again informally
speaking) ``large'' means ``having a~rich algebraic 
structure''. More specifically, for our purposes, ``large'' 
will mean ``having many finite sums (or products)''. 
The answer to what is an ultrafilter counterpart in this 
case, will be given a~bit later, in Section~1.3.


\subsection{Finite sums}

As usually, $\mathbb N$~is the set of natural numbers,
which are understood as finite ordinals, and 
$\omega=\mathbb N$ is the first infinite ordinal.

\begin{ntt}\label{fs-fp}
1. 
Given a~(finite or infinite) sequence 
$x_0,x_1,\ldots,x_i,\ldots$ in~$\mathbb N$, let 
$$\FS(x_i)_i$$ denote the set of finite sums 
$x_{i_0}+x_{i_1}+\ldots+x_{i_n}$ 
for all $n$ and $i_0<i_1<\ldots<i_n$ in~$\mathbb N$.

2. 
If $X$~is a~(multiplicatively written) semigroup, 
for a~sequence $x_0,x_1,\ldots,x_i,\ldots$ in~$X$, 
let $$\FP(x_i)_i$$ denote the set of finite products 
$x_{i_0}\cdot x_{i_1}\cdot\ldots\cdot x_{i_n}$ 
for all $n$ in~$\mathbb N$ and $i_0<i_1<\ldots<i_n$ in~$X$.
\end{ntt}

Let us point out that if the sequence~$(x_i)_i$ was 
injective, then all summands in $\FS(x_i)_i$ (respectively, 
factors in $\FP(x_i)_i$) involve distinct elements. Also 
notice that if the semigroup~$X$ is non-commutative, then 
the increasing ordering of $i_0,i_1,\ldots,i_n$ is essential.

\begin{exmp}\label{fs}
$\FS(1,3,5)=\{1,3,4,5,6,8,9\}$. 
\end{exmp}

The historically first Ramsey-theoretic result 
was due to Hilbert~\cite{Hilbert} (much before 
Ramsey's~\cite{Ramsey}, which gave the name 
of the theory) and related to finite sums:

\begin{thm}[Hilbert, 1892]\label{hilbert}
For any finite partition of~$\mathbb N$ and 
any $n\in\mathbb N$ there exist a~part~$A$, 
a~sequence $(x_i)_{i<n}$ in~$\mathbb N$, and 
an infinite $B\subseteq\mathbb N$ such that 
$$
\bigcup_{b\in B}\bigl(b+\FS(x_i)_{i<n}\bigr)
\subseteq A.
$$
\end{thm}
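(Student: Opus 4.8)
The plan is to reduce the assertion, by a pigeonhole argument over consecutive blocks of integers, to the \emph{finite Hilbert cube lemma}, and to prove the latter by induction on~$n$ by means of Hilbert's ``shift'' trick. (I produce \emph{positive} generators $x_i$; for $x_i=0$ everything is trivial.)

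\emph{Step 1: the finite cube lemma.} I would first show that for all $n,r\in\mathbb N$ there is $N=N(n,r)$ such that whenever $\{1,\ldots,N\}$ is coloured in $r$ colours there are $a\ge 1$ and positive integers $x_0,\ldots,x_{n-1}$ with $a+\sum_{i\in I}x_i\le N$ for every $I\subseteq\{0,\ldots,n-1\}$ and the colouring constant on the $2^n$ numbers $a+\sum_{i\in I}x_i$. This goes by induction on~$n$, with $n=0$ trivial ($a=1$). For the step from $n$ to $n+1$ set $M:=N(n,r)$ and look at the ``windows'' $w_m:=\bigl(c(m+1),\ldots,c(m+M)\bigr)$, $m=0,1,\ldots,r^M$; since at most $r^M$ windows are possible, $w_p=w_q$ for some $p<q$, and we put $t:=q-p\ge 1$. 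Applying the inductive hypothesis to the colouring $y\mapsto c(p+y)$ of $\{1,\ldots,M\}$ gives $a$ and $x_0,\ldots,x_{n-1}$ on which $c$ is constant, say with value~$\gamma$, at the points $p+a+\sum_{i\in I}x_i$; since $w_p=w_q$, the colour~$\gamma$ is also taken at $q+a+\sum_{i\in I}x_i=(p+a)+\sum_{i\in I}x_i+t$. Hence the $2^{n+1}$ numbers $(p+a)+\sum_{i\in I}x_i+\varepsilon t$, $I\subseteq\{0,\ldots,n-1\}$, $\varepsilon\in\{0,1\}$, all have colour~$\gamma$: a monochromatic affine $(n+1)$-cube with base $p+a$ and positive generators $x_0,\ldots,x_{n-1},t$. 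A short estimate of the largest of these numbers gives $N(n+1,r)=r^{N(n,r)}+N(n,r)$.

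\emph{Step 2: from finite to infinite.} Now fix an $r$-colouring $c$ of~$\mathbb N$ and a number~$n$, and put $M:=N(n,r)$. Partition the positive integers into the consecutive blocks $\{jM+1,\ldots,(j+1)M\}$, $j\in\mathbb N$, and colour block~$j$ by its \emph{pattern} $\bigl(c(jM+1),\ldots,c(jM+M)\bigr)$. Only finitely many patterns occur, so one pattern~$P$ is shared by an infinite set $J$ of blocks. Apply Step~1 to~$P$, viewed as a colouring of $\{1,\ldots,M\}$: this yields $a$, positive generators $x_0,\ldots,x_{n-1}$ and a colour~$\gamma$ with $P$ constant (value~$\gamma$) on the numbers $a+\sum_{i\in I}x_i$. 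Since every block $j\in J$ has pattern~$P$, we get $c\bigl(jM+a+\sum_{i\in I}x_i\bigr)=\gamma$ for all $j\in J$ and all $I\subseteq\{0,\ldots,n-1\}$. Consequently the part $A:=c^{-1}(\gamma)$, the generators $x_0,\ldots,x_{n-1}$, and the infinite set $B:=\{jM+a:j\in J\}$ satisfy $b+\sum_{i\in I}x_i\in A$ for every $b\in B$ and every non-empty $I\subseteq\{0,\ldots,n-1\}$ (and even $B\subseteq A$); since $\bigcup_{b\in B}\bigl(b+\FS(x_i)_{i<n}\bigr)=\{\,b+\sum_{i\in I}x_i : b\in B,\ \emptyset\ne I\subseteq\{0,\ldots,n-1\}\,\}$, this is exactly the claim.

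The one genuine difficulty is the inductive step of Step~1: a fresh generator must be manufactured, and the only leverage available is that a sufficiently long stretch of integers is forced to repeat a colouring pattern, which produces the colour-preserving translation~$t$; Step~2 and the base cases are plain pigeonhole. Finally, I would note that once the machinery of Section~1.3 is at hand the theorem is an immediate corollary: an additively idempotent ultrafilter~$u$ on~$\mathbb N$ has, for each $A\in u$, a sequence $(y_i)_i$ with $\FS(y_i)_i\subseteq A$, and then $x_i:=y_i$ ($i<n$) and $B:=\FS(y_i)_{i\ge n}$ work; equivalently, by Theorem~\ref{Ramsey} it is enough to exhibit such an ultrafilter~$u$.
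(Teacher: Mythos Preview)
Your proof is correct, but there is nothing to compare it against: the paper does not prove Theorem~\ref{hilbert}. It is stated purely as historical background (with a citation to Hilbert's 1892 paper) on the way to Schur's, Folkman's and finally Hindman's theorems; no argument is given or sketched.

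That said, your two-step argument is the standard one and is carried out cleanly. The inductive shift trick in Step~1 (pigeonhole on colour-patterns of windows of length $N(n,r)$ to manufacture the extra generator~$t$) is exactly Hilbert's original idea, and the bound $N(n+1,r)=r^{N(n,r)}+N(n,r)$ is right. Step~2, reducing the infinite statement to the finite cube lemma by colouring consecutive $M$-blocks by their pattern, is a routine compactness/pigeonhole argument; your identification $B=\{jM+a:j\in J\}$ is correct and even yields the extra $B\subseteq A$.

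Your closing remark is well placed and is the only point of contact with the paper's methodology: once Section~1.3 is available, Hilbert's theorem is an immediate corollary of Hindman's theorem (Theorem~\ref{hindman}) via Lemma~\ref{fs in idempotents}, exactly as you indicate---take an idempotent $u\in\scc\mathbb N$, pick $A\in u$, get $(y_i)_{i<\omega}$ with $\FS(y_i)_{i<\omega}\subseteq A$, and set $x_i:=y_i$ for $i<n$ and $B:=\FS(y_i)_{i\ge n}$. This is in the spirit of the paper, which advertises precisely that derivation route for all the classical results listed, but the paper itself does not spell it out for Theorem~\ref{hilbert}.
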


Here $b+C$ denotes $\{b+c:c\in C\}$, the shift 
of the set~$C$ by~$b$.

Another early result, Schur's theorem~\cite{Schur}, 
involved only sums with two summands, but without shifts:

\begin{thm}[Schur, 1916]\label{schur}
For any finite partition of~$\mathbb N$ 
there exist a~part~$A$ and $x,y\in\mathbb N$ such that 
$$
\{x,y,x+y\}\subseteq A.
$$
\end{thm}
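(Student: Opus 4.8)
The plan is to deduce Schur's theorem from the finite Ramsey theorem for pairs (the two‑dimensional case of the theorem that gave the subject its name). Recall that this asserts: for every $r\in\mathbb N$ there is an $N\in\mathbb N$ so large that whenever the edges of the complete graph on $N$ vertices are colored with $r$ colors, some triangle is monochromatic. I would take this as known; it is the only substantial ingredient.

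So suppose $\mathbb N=A_0\cup\dots\cup A_{r-1}$, and let $c\colon\mathbb N\to\{0,\dots,r-1\}$ be the map sending each number to the index of the part containing it. Fix $N$ as above for this $r$, view $\{1,2,\dots,N\}$ as the vertex set of the complete graph, and color the edge $\{i,j\}$ with $i<j$ by $c(j-i)$. By the finite Ramsey theorem there are $i<j<k$ in $\{1,\dots,N\}$ spanning a monochromatic triangle. Putting $x=j-i$ and $y=k-j$, we have $x+y=k-i$, and the statement that the three edges $\{i,j\}$, $\{j,k\}$, $\{i,k\}$ carry the same color is precisely that $c(x)=c(y)=c(x+y)$; hence $x,y,x+y$ all lie in the single part $A=A_{c(x)}$, with $x,y\ge 1$ since $i<j<k$.

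I do not expect a genuine obstacle here: the argument is a direct transcription once the finite Ramsey theorem is in hand, and the statement as phrased does not insist that $x,y$ be distinct, so no further combinatorial input is needed. (Should one wish to avoid citing Ramsey's theorem, one can run the usual pigeonhole induction on~$r$, but this merely reproves the triangle case of Ramsey's theorem inline. Alternatively, $x,x_1,x+x_1$ can be read off from a finite‑sums set $\FS(x_i)_i$ contained in a single part, though that invokes the considerably stronger Hindman theorem, which has not yet been stated at this point in the paper.)
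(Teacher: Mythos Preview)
Your argument is correct and is the classical derivation of Schur's theorem from the finite Ramsey theorem for pairs; nothing is missing. Note, however, that in the paper this theorem is merely \emph{stated} as part of the historical overview in Section~1.2 and is not given a proof there, so there is no ``paper's own proof'' to compare against. Your approach is entirely appropriate in the context of the paper (which explicitly places Schur's result before Ramsey's only chronologically, not logically), and your parenthetical remark that one could also read off $x,y,x+y$ from a Hindman $\FS$-set anticipates exactly the ultrafilter-based viewpoint the paper develops afterwards.
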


\begin{rmrk}
Interesting enough, Schur used this result to prove 
that $x^n+y^n\equiv z^n\pmod p$ has solutions 
for all sufficiently large prime~$p$.
\end{rmrk}

A~natural question is whether two summands $x,y$ 
can be improved to three summands $x,y,z$, or more. 
The answer is affirmative for any finite number; 
this theorem, though was explicitly written by Folkman 
and by Sanders in the late 1960s (and sometimes 
refered as to Folkman's theorem, 
see, e.g.,~\cite{Graham et al}), is actually 
an immediate consequence of Rado's much earlier 
theorem on systems of linear equations:

\begin{thm}[essentially Rado, 1933; 
Folkman, Sanders, 1969]
\label{folkman}
For any finite partition of~$\mathbb N$ and 
any $n\in\mathbb N$ there exist a~part~$A$ and 
an $n$-tuple $x_0,\dots,x_{n-1}$ such that 
$$
\FS(x_i)_{i<n}\subseteq A.
$$
\end{thm}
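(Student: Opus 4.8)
The statement $\FS(x_i)_{i<n}\subseteq A$ just says that the initial segment of an infinite finite‑sums set lies in a single part of the partition, so the plan is to derive Folkman's theorem from the (stronger) \emph{Hindman finite sums theorem}, which produces such an \emph{infinite} sequence and which is in any case the central result this paper is organized around. To prove Hindman's theorem I would use the ultrafilter method to be developed in Section~1.3: one passes to the ultrafilter extension $(\beta\mathbb{N},+)$ of the semigroup $(\mathbb{N},+)$, observes that it is a compact right‑topological semigroup and hence (Ellis's lemma) contains an idempotent ultrafilter $u=u+u$, and then runs the Galvin--Glazer recursion, which extracts from any member of such a $u$ a sequence all of whose finite sums stay inside that member.

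Granting Hindman's theorem, the deduction is immediate. Given a finite partition $\mathbb{N}=A_0\cup\dots\cup A_{r-1}$ and $n\in\mathbb{N}$, Hindman's theorem supplies some $j<r$ and an infinite sequence $(y_i)_{i\in\mathbb{N}}$ with $\FS(y_i)_i\subseteq A_j$; setting $x_i:=y_i$ for $i<n$, every element of $\FS(x_i)_{i<n}$ has the form $y_{i_0}+\dots+y_{i_k}$ with $i_0<\dots<i_k<n$ and so lies in $\FS(y_i)_i$, whence $\FS(x_i)_{i<n}\subseteq\FS(y_i)_i\subseteq A_j=:A$. Thus the reduction contributes nothing: the whole difficulty is concentrated in Hindman's theorem, and concretely in the existence of an idempotent ultrafilter together with the recursive construction of the witnessing sequence. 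That construction is the step I expect to be the real obstacle.

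For completeness I note that Folkman's theorem also admits proofs not routed through the \emph{infinite} Hindman theorem, as in the original arguments of Sanders and of Rado. One natural such route: writing $c\colon\mathbb{N}\to\{0,\dots,r-1\}$ for the given coloring, color $\mathbb{N}^n$ by a map under which homothetic copies of the unit cube $\{0,1\}^n$ correspond to affine cubes $\bigl\{a+\sum_{i\in I}x_i:I\subseteq\{0,\dots,n-1\}\bigr\}$, and apply the Gallai--Witt (multidimensional van der Waerden) theorem to obtain a monochromatic such cube. A single application already delivers a monochromatic \emph{affine} cube — no more than what Hilbert's theorem, Theorem~\ref{hilbert}, provides, in fact along an infinite set of base vertices — and the substantive point in this route, handled by Sanders and by Rado, is to remove the base vertex $a$, i.e.\ to pass from a monochromatic affine cube to a monochromatic set $\FS(x_i)_{i<n}$ with no shift at all. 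I would, however, prefer the ultrafilter route, which is the one in keeping with the methods of this paper.
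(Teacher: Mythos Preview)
Your proposal is correct and matches the paper's treatment. The paper does not give a standalone proof of Folkman's theorem: it is stated purely as a historical milestone on the way to Hindman's theorem, and the paper then establishes Hindman's theorem in Section~1.3 via Ellis' theorem and idempotent ultrafilters in $(\scc\mathbb N,+)$, from which Folkman follows by the trivial truncation you describe. Your side remark about the Sanders/Rado route through affine cubes is accurate historical context but is not an argument the paper pursues.
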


Clearly, then some part~$A$ should include 
even sets $\FS(x_i)_{i<n}$ for some (distinct) 
sequences $(x_i)_{i<n}$ for all $n\in\mathbb N$. 
Finally, one can ask whether it is possible to 
have a~{\it single\/} infinite sequence whose 
finite sums lie in some part. This was known as the 
Graham--Rothschild conjecture until it was proved by 
Hindman~\cite{Hindman 1974} (for the history behind 
its proof see Section~1 of~\cite{Hindman 2005}):

\begin{thm}[The Finite Sums Theorem, Hindman, 1974]%
\label{hindman}
For any finite partition of~$\mathbb N$ there exist 
a~part~$A$ and an infinite sequence $x_0,\dots,x_i,\ldots$ 
such that 
$$
\FS(x_i)_{i<\omega}\subseteq A.
$$
\end{thm}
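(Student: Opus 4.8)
The plan is to prove Hindman's theorem via the algebraic machinery hinted at in the excerpt: idempotent ultrafilters in the \v{C}ech--Stone compactification $\beta\mathbb{N}$ viewed as a right-topological semigroup under the extension of addition. First I would recall (or construct) the semigroup structure on $\beta\mathbb{N}$: for ultrafilters $u,v$ one sets $A\in u+v$ iff $\{n : A-n\in v\}\in u$, and one checks that $+$ is associative and that for each fixed $v$ the map $u\mapsto u+v$ is continuous. Then by the Ellis--Namakura lemma — any compact Hausdorff right-topological semigroup has an idempotent — there is an ultrafilter $p\in\beta\mathbb{N}$ with $p+p=p$.

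Next I would use the hypothesis: a finite partition $\mathbb{N}=A_0\cup\dots\cup A_{k-1}$ is given, and since $p$ is an ultrafilter exactly one block, say $A=A_j$, lies in $p$. The heart of the argument is the following combinatorial lemma about idempotent ultrafilters: if $A\in p$ and $p+p=p$, then the set $A^\star := \{n\in A : A-n\in p\}$ again lies in $p$, and moreover for every $n\in A^\star$ one has $A^\star-n\in p$, i.e. $(A^\star)^\star \supseteq A^\star$ relative to membership in $p$. Granting this, I would build the sequence $(x_i)_{i<\omega}$ recursively: choose $x_0\in A^\star$ arbitrarily; having chosen $x_0<\dots<x_{m-1}$ with all finite sums lying in $A^\star$, observe that the set $B_m := A^\star \cap \bigcap\{A^\star - s : s\in \FS(x_i)_{i<m}\}$ is a finite intersection of members of $p$, hence is in $p$, hence is infinite, so pick $x_m\in B_m$ larger than all previous ones. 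By construction every new finite sum either is $x_m$ itself (in $A^\star\subseteq A$) or is $s+x_m$ for some earlier finite sum $s$, and $x_m\in A^\star - s$ gives $s+x_m\in A^\star\subseteq A$. Thus $\FS(x_i)_{i<\omega}\subseteq A$, as required.

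I would expect the main obstacle to be the lemma that membership of $A$ in an idempotent $p$ propagates, i.e. that $A^\star\in p$ and that $n\in A^\star$ implies $A^\star-n\in p$. This is where idempotency $p=p+p$ is used twice: once to get $A^\star\in p$ (unwinding the definition $A\in p+p$ means $\{n:A-n\in p\}\in p$, which is exactly $A^\star\in p$ after intersecting with $A\in p$), and once more to see that for $n\in A^\star$, since $A-n\in p=p+p$, the set $\{t : (A-n)-t\in p\}\in p$, and intersecting appropriately shows $A^\star - n\in p$. Managing the bookkeeping of these ``starred'' sets and the shifts $-n$ carefully is the only real subtlety; everything else is a routine recursion using that a finite intersection of sets in an ultrafilter is nonempty (indeed infinite). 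The same proof works verbatim for $\FP$ in any infinite semigroup, and for commutative semigroups one need not worry about the order of summands.
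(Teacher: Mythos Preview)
Your proposal is correct and follows essentially the same route as the paper: the Galvin--Glazer argument via an idempotent ultrafilter in $(\scc\mathbb N,+)$, combining Ellis' theorem (Theorem~\ref{Ellis}) with the semigroup structure on $\scc\mathbb N$ (Lemma~\ref{beta X}) and then extracting the sequence as in Lemma~\ref{fs in idempotents}. In fact you spell out more than the paper does, since your $A^\star$ lemma and the recursive construction are precisely the content of Lemma~\ref{fs in idempotents}, which the paper merely states.

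One small point worth tightening: you should ensure the idempotent $p$ is \emph{free}, since the paper takes $\mathbb N$ to include $0$ and the principal ultrafilter at $0$ is a (useless) idempotent. The fix is exactly as the paper indicates---work in $\mathbb N\setminus\{0\}$, or observe that the free ultrafilters form a closed subsemigroup to which Ellis applies. This is what guarantees each $B_m$ is infinite so that you can pick $x_m$ strictly larger than the previous terms. Also, your closing remark that ``the same proof works verbatim for $\FP$ in any infinite semigroup'' overshoots slightly: one needs a hypothesis (no idempotents, or weak left cancellativity) to secure a free idempotent in $\scc X$, as in Corollary~\ref{free idempotent} and Theorem~\ref{hindman general}.
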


Hindman's original proof in~\cite{Hindman 1974} 
was purely combinatorial and rather complicated.%
\footnote{In 2008 the author heard from Hindman 
an amusing phrase: ``I~never understood my proof''.} 
In the same year Baumgartner provided a~much shorter 
combinatorial proof~\cite{Baumgartner}. Although both 
proofs remain interesting (in particular, in point of 
view of reverse mathematics, see Remark~\ref{reverse}), 
only the third proof, based on the idea to use 
{\it idempotent ultrafilters\/}, was the beginning 
of a~new era in algebraic Ramsey theory.


\subsection{Idempotent ultrafilters}

The idea to use such type of ultrafilters for the proof 
of the Graham--Rothschild conjecture was proposed 
(but not published) by Galvin around~1970. 
In~\cite{Hindman 1972}, Hindman showed that under~CH 
(Continuum Hypothesis), the conjecture is equivalent 
to the existence of such ultrafilters. Finally, in~1975
Glazer observed that their existence follows from some 
topological facts, which have been already known; 
this proof was first published in~\cite{Comfort 1977}.

Recall that a~{\it groupoid\/} is a~set~$X$ with an 
arbitrary binary operation on it (e.g., a~semigroup is 
just an associative groupoid).%
\footnote{This is the terminology common in universal algebra, 
see, e.g., \cite{Burris Sankappanavar} or~\cite{Maltsev 1970}. 
In category theory and related fields, ``groupoid'' means 
a~partial group (i.e.~a~set with a~binary {\em partial} 
operation which is associative, invertible, and has an identity) 
or a~small category in which every morphism is invertible 
(which is essentially the same), see, e.g.,~\cite{Higgins}. 
A~groupoid in the sense of universal algebra is sometimes 
(following Bourbaki) called a~{\em magma}.}  
A~(multiplicatively written) groupoid $(X,\,\cdot\,)$ is 
{\it right topological\/} iff $X$~is endowed with a~topology 
in which all its right shifts, i.e.~the maps $x\mapsto xa$ 
for all $a\in X$, are continuous.

The first of the topological facts is the following 
statement, due in its final form to Ellis~\cite{Ellis}:

\begin{thm}[Ellis, 1969]\label{Ellis}
Every compact Hausdorff right topological semigroup 
has an idempotent. 
\end{thm}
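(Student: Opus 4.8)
The plan is to follow the classical argument via a \emph{minimal closed subsemigroup}, using only the continuity of the right shifts. Let $S$ denote our compact Hausdorff right topological semigroup, and let $\mathcal{C}$ be the collection of all nonempty closed subsemigroups of $S$, partially ordered by reverse inclusion. First I would note that $\mathcal{C}$ is nonempty, since $S\in\mathcal{C}$, and that every chain in it has an upper bound: the intersection of a chain of nonempty closed subsets of a compact space is nonempty by the finite intersection property, and it is obviously closed and closed under the operation. Hence Zorn's lemma yields a minimal element $T\in\mathcal{C}$.

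Next, fix any $x\in T$ and consider $Tx=\{tx:t\in T\}$. Since the right shift $\rho_x\colon t\mapsto tx$ is continuous and $T$ is compact, $Tx$ is compact, hence closed because $S$ is Hausdorff. It is a subsemigroup, since associativity gives $(t_1x)(t_2x)=(t_1xt_2)x\in Tx$, using that $t_1xt_2\in T$; and $Tx\subseteq T$ because $T$ is a subsemigroup. By minimality of $T$ we get $Tx=T$, so in particular $x\in Tx$.

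Finally, set $R=\{t\in T:tx=x\}$. This set is nonempty by the previous step; it is closed, being the intersection of $T$ with the preimage $\rho_x^{-1}(\{x\})$ of a point under a continuous map into a Hausdorff space; and it is a subsemigroup, since $t_1x=x$ and $t_2x=x$ imply $(t_1t_2)x=t_1(t_2x)=t_1x=x$. Thus $R\in\mathcal{C}$ and $R\subseteq T$, so minimality forces $R=T$. As $x\in T=R$, we conclude $xx=x$, that is, $x$ is an idempotent of $S$.

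The only subtlety — and the reason the hypothesis is stated in terms of right shifts — is that one must never invoke continuity of left shifts or joint continuity of the multiplication, neither of which is available: every topological step above (compactness and closedness of $Tx$, closedness of $R$) uses only that $\rho_x$ is continuous. Compactness of $S$ enters twice, once to run Zorn's lemma and once to see that images of right shifts are closed, while Hausdorffness is precisely what upgrades ``compact'' to ``closed'' in both places.
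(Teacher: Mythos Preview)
Your proof is correct and is exactly the standard Zorn's-lemma argument for Ellis's theorem. The paper itself does not supply a proof of this statement; it merely cites it as a known result due to Ellis, so there is nothing to compare against beyond noting that your argument is the classical one (essentially the proof in Hindman--Strauss, to which the paper refers).
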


The second fact is that $(\mathbb N,+)$, the additive 
semigroup of natural numbers, extends (in a~canonical way) 
to $(\scc\mathbb N,+)$, a~compact Hausdorff extremally 
disconnected semigroup of ultrafilters over~$\mathbb N$ 
which right topological; moreover, all its left shifts 
by principal ultrafilters are continuous.

Combining these two facts, we see that the semigroup 
$(\scc\mathbb N,+)$ has an idempotent. Moreover, it has 
a~{\it free\/} ultrafilter which is idempotent; to see, 
take rather $\mathbb N\setminus\{0\}$. 
It remains to use Galvin's observation:
If $u$~is an idempotent in  $(\scc\mathbb N,+)$, then 
for any $S\in u$ there is an infinite $(x_i)_{i<\omega}$ 
in~$\mathbb N$ such that $\FS(x_i)_{i<\omega}\subseteq S$. 
Now Hindman's Finite Sums Theorem (Theorem~\ref{hindman}) 
is immediate: whenever $u$~is an idempotent ultrafilter 
then one part of any finite partition does belong to~it.


A~real value of these observations, however, is that 
they have a~very broad character; in fact, they lead 
to a~general version of Theorem~\ref{hindman}, as was 
(according to~\cite{Comfort 1977}) independently 
pointed out by Glazer and Hindman around~1975.

Recall that $\scc X$, the set of ultrafilters over~$X$, 
carries the standard topology generated by open sets 
of form $\widetilde A=\{u\in\scc X:A\in u\}$ for all 
$A\subseteq X$, which is compact, Hausdorff, and 
extremally disconnected (the latter means that 
the closure of any open set is open).

Given a~groupoid $(X,\,\cdot\,)$, it canonically extends 
to the groupoid $(\scc X,\,\cdot\,)$ whose operation is 
defined by letting 
$
u\cdot v=
\{A\subseteq X:
\{x\in X:\{y\in X: 
x\cdot y\in A\}\in v\}\in u
\}
$
for all $u,v\in\scc X$, the {\it ultrafilter extension\/} 
of $(X,\,\cdot\,)$. (Here the word ``extension'' relates 
to the usual identification of elements of~$X$ with the 
principal ultrafilters over~$X$, under which one lets 
$X\subseteq\scc X$.)
Topologically the extension is going as follows:
first we continuously extend all left shifts of the 
initial operation, and then all right shifts of the 
obtained partially extended operation (see 
\cite{Saveliev} or~\cite{Saveliev (inftyproc)} for 
details and explaining of canonicity of the 
construction).

\begin{lmm}\label{beta X} 
Every discrete semigroup $(X,\,\cdot\,)$ canonically 
extends to the semigroup $(\scc X,\,\cdot\,)$. 
The latter is, w.r.t.~the standard compact Hausdorff 
extremally disconnected topology on~$\scc X$, a~right 
topological semigroup with continuous left shifts 
by principal ultrafilters.
\end{lmm}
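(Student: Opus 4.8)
The plan is a direct verification straight from the defining formula of the extended operation. For $v\in\scc X$ and $A\subseteq X$ it is convenient to set
$$
\partial_v A=\{x\in X:\{y\in X:x\cdot y\in A\}\in v\},
$$
so that the definition reads $A\in u\cdot v\iff\partial_v A\in u$, i.e.\ $u\cdot v=\{A\subseteq X:\partial_v A\in u\}$. The first step is to check that, for each fixed $v$, the map $\partial_v\colon P(X)\to P(X)$ is a homomorphism of Boolean algebras: it preserves $X$ and finite intersections because $v$ is a filter, and it preserves complements because $v$ is an \emph{ultra}filter (for every $A$, exactly one of $\{y:xy\in A\}$ and its complement lies in $v$). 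Hence $u\cdot v$, being the preimage of the ultrafilter $u$ under a Boolean homomorphism, is again an ultrafilter on $X$, so $\cdot$ is a genuine binary operation on $\scc X$; and unwinding the formula once gives $\widehat a\cdot\widehat b=\widehat{a\cdot b}$ for principal ultrafilters, so $(\scc X,\cdot)$ does extend $(X,\cdot)$ under the identification $X\subseteq\scc X$ (for canonicity of the construction see the references cited above).

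Associativity of $\cdot$ on $\scc X$ is the only point at which associativity of $X$ is used, and I would isolate it as the operator identity $\partial_v\circ\partial_w=\partial_{v\cdot w}$ on $P(X)$, valid for all $v,w\in\scc X$. Granting it, both $A\in(u\cdot v)\cdot w\iff\partial_w A\in u\cdot v\iff\partial_v\partial_w A\in u$ and $A\in u\cdot(v\cdot w)\iff\partial_{v\cdot w}A\in u$ express the same condition, so $(u\cdot v)\cdot w=u\cdot(v\cdot w)$. To prove the identity one unfolds both sides: $x\in\partial_{v\cdot w}A$ iff $\{y:\{z:x(yz)\in A\}\in w\}\in v$, whereas $x\in\partial_v\partial_w A$ iff $\{y:\{z:(xy)z\in A\}\in w\}\in v$, and these agree because $x(yz)=(xy)z$ in $X$. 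I expect this bookkeeping, keeping the three nested membership conditions straight, to be the most error-prone step, even though it is shallow.

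For the topological claims I would argue directly with the clopen base $\{\widetilde A:A\subseteq X\}$. Fixing $v$, for every $A$ one has
$$
\{u\in\scc X:u\cdot v\in\widetilde A\}=\{u:\partial_v A\in u\}=\widetilde{\partial_v A},
$$
which is clopen; since the preimage under the right shift $u\mapsto u\cdot v$ of each basic open set is open, every right shift is continuous, i.e.\ $(\scc X,\cdot)$ is right topological. Finally, fixing $a\in X$ and its principal ultrafilter $\widehat a$, the formula gives $A\in\widehat a\cdot v\iff a\in\partial_v A\iff\{y\in X:a\cdot y\in A\}\in v$, hence
$$
\{v\in\scc X:\widehat a\cdot v\in\widetilde A\}=\widetilde{\{y\in X:a\cdot y\in A\}},
$$
again open, so the left shift $v\mapsto\widehat a\cdot v$ is continuous. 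This last simplification relies on $\widehat a$ being principal: for a non-principal left argument $u$ the set $\{v:\partial_v A\in u\}$ need not be open, which is exactly why the lemma asserts continuity only for principal left shifts. The remaining assertions about the topology itself — compactness, Hausdorffness, extremal disconnectedness — have already been recalled above and need nothing further.
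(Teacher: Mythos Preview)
Your argument is correct in every step: the Boolean-homomorphism observation for $\partial_v$ cleanly yields that $u\cdot v$ is an ultrafilter, the identity $\partial_v\circ\partial_w=\partial_{v\cdot w}$ is exactly where associativity of~$X$ enters, and the two continuity computations with the clopen base are the standard ones. Note, however, that the paper does not supply a proof of this lemma at all: it is quoted as a known fact, with the construction sketched in the preceding paragraph and details deferred to \cite{Saveliev,Saveliev (inftyproc)} and the treatise~\cite{Hindman Strauss}. Your write-up is therefore not so much a different route as a self-contained verification of a result the paper takes for granted; the only stylistic difference is that the paper describes the extension topologically (first extend left shifts continuously, then right shifts), whereas you work purely from the set-theoretic formula, which is the cleaner choice for checking associativity and the shift continuities directly.
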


A~groupoid is {\it weakly left cancellative\/} iff 
for any of its elements $a,b$ the equation $a\cdot x=b$ 
has only finitely many solutions. It can be verified 
that whenever $X$~is weakly left cancellative then 
the free ultrafilters form a~closed subgroupoid 
of $\scc X$; therefore, Ellis' theorem (Theorem~\ref{Ellis}) 
is applicable to weakly left cancellative semigroups. 
Thus combining \ref{Ellis} and~\ref{beta X}, we obtain:

\begin{coro}\label{free idempotent} 
For every semigroup $(X,\,\cdot\,)$,
the semigroup $(\scc X,\,\cdot\,)$ has an idempotent. 
Moreover, if $(X,\,\cdot\,)$ is infinite and either 
without idempotents or weakly left cancellative, 
then $(\scc X,\,\cdot\,)$ has a~free idempotent.
\end{coro}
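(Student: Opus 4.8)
The plan is to derive Corollary~\ref{free idempotent} from the two ingredients already assembled, namely Ellis' theorem (Theorem~\ref{Ellis}) and the structure lemma (Lemma~\ref{beta X}). The first assertion is immediate: by Lemma~\ref{beta X} the space $\scc X$ with its standard topology is a~compact Hausdorff right topological semigroup, so Theorem~\ref{Ellis} applies directly and yields an idempotent $u=u\cdot u$ in $\scc X$. No hypothesis on $X$ beyond associativity is needed here.

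For the second assertion I would work with the subset $X^*=\scc X\setminus X$ of free (nonprincipal) ultrafilters. The key point to establish is that $X^*$, when nonempty, is a~\emph{closed subsemigroup} of $\scc X$ in the two stated cases; once this is known, $X^*$ is itself a~compact Hausdorff right topological semigroup (closedness gives compactness, and the right topological property and Hausdorffness are inherited from $\scc X$), so Theorem~\ref{Ellis} produces an idempotent inside $X^*$, which is exactly a~free idempotent. Closedness of $X^*$ in $\scc X$ is easy and general: $X$ is an open discrete subspace of $\scc X$ (each principal ultrafilter $\widehat a$ is isolated, being the only ultrafilter containing $\{a\}$), hence $X^*$ is closed. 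Nonemptiness of $X^*$ holds because $X$ is infinite, so free ultrafilters over~$X$ exist. The substantive step, and the one I expect to be the main obstacle, is showing that $X^*$ is closed under the extended operation.

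For this I would split into the two hypotheses. If $(X,\cdot)$ is weakly left cancellative, I would invoke the remark made just before the corollary in the excerpt: ``It can be verified that whenever $X$~is weakly left cancellative then the free ultrafilters form a~closed subgroupoid of $\scc X$.'' Concretely, given free $u,v$ and a~product $w=u\cdot v$, one must check $w$ is free, i.e. contains no singleton; if $\{b\}\in w$ then by definition $\{x\in X:\{y:x\cdot y=b\}\in v\}\in u$, and since $v$ is free, $\{y:x\cdot y=b\}\in v$ forces that set to be infinite, contradicting weak left cancellativity of~$X$. Hence $w$ is free, so $X^*$ is a~subsemigroup. If instead $(X,\cdot)$ has no idempotents, a~slightly different argument is needed: one shows that any idempotent of $\scc X$ must already be free, because a~principal idempotent $\widehat a$ would force $a\cdot a=a$ in $X$, contradicting the absence of idempotents in~$X$; then the idempotent produced by the first part of the corollary is automatically free, and one does not even need $X^*$ to be a~subsemigroup in this case.

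Putting the pieces together: in the weakly left cancellative case, apply Ellis' theorem to the compact Hausdorff right topological semigroup $X^*$ to get a~free idempotent; in the no-idempotents case, apply Ellis' theorem to $\scc X$ and observe that the resulting idempotent cannot be principal. The only delicate verification is the stability of freeness under the extended multiplication under weak left cancellativity, which is where the combinatorial definition of $u\cdot v$ must be unwound carefully; everything else is a~routine application of the already-cited topological facts.
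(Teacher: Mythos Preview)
Your proposal is correct and matches the paper's intended argument: the paper explicitly derives the corollary by ``combining \ref{Ellis} and~\ref{beta X}'', having just noted that under weak left cancellativity the free ultrafilters form a closed subgroupoid to which Ellis' theorem applies, and your treatment of the no-idempotents case (a principal idempotent $\widehat a$ would force $a\cdot a=a$) is the natural reading of that hypothesis. Your unwinding of the product in the weakly left cancellative case is exactly the verification the paper leaves to the reader.
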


The next result extends Galvin's observation 
to arbitrary semigroups:

\begin{lmm}\label{fs in idempotents} 
For every semigroup $(X,\,\cdot\,)$, if $u$~is 
a~free idempotent in $(\scc X,\,\cdot\,)$, then for 
any $S\in u$ there is an infinite $(x_i)_{i<\omega}$ 
in~$S$ such that $\FP(x_i)_{i<\omega}\subseteq S$. 
\end{lmm}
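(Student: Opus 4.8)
The standard strategy is to build the sequence $(x_i)_{i<\omega}$ recursively, maintaining at each stage a member of the ultrafilter $u$ in which the "future" of the construction can take place. The crucial auxiliary fact is: if $u$ is idempotent and $A \in u$, then the set
$$
A^\star = \{x \in A : x^{-1}A \in u\},\qquad\text{where }x^{-1}A = \{y \in X : x\cdot y \in A\},
$$
also belongs to $u$. This is immediate from unwinding the definition of the operation on $\scc X$: since $u\cdot u = u$, we have $A \in u\cdot u$, which literally says $\{x : x^{-1}A \in u\}\in u$; intersecting with $A\in u$ gives $A^\star \in u$. Moreover, one checks the key "pushing" property: if $x \in A^\star$, then for every $y \in (x^{-1}A)^\star$ we have $x\cdot y \in A^\star$ again (not merely in $A$); this is the standard computation showing $x\cdot y \in A$ and $(x\cdot y)^{-1}A \supseteq y^{-1}(x^{-1}A) \in u$, using associativity only through the already-established fact that the relevant sets lie in $u$.

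Granting these, the plan is as follows. Start with $S \in u$ and set $S_0 = S^\star \in u$; pick any $x_0 \in S_0$. Having chosen $x_0,\dots,x_n$ and a set $S_n \in u$ with $S_n \subseteq S^\star$ and with the property that every element of $\FP(x_i)_{i\le n}$ lies in $S^\star$ and has the further feature that $S_n$ is contained in $w^{-1}S^\star$ for each such finite product $w$, we form
$$
S_{n+1} = S_n^\star \cap \bigcap\{\,(x_{i_0}\cdots x_{i_k})^{-1}S_n : i_0 < \dots < i_k \le n\,\},
$$
a finite intersection of members of $u$, hence itself in $u$; since $u$ is free this set is infinite, so we may pick $x_{n+1} \in S_{n+1}$ distinct from $x_0,\dots,x_n$. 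One then verifies, using the pushing property above, that every element of $\FP(x_i)_{i \le n+1}$ again lies in $S^\star \subseteq S$, and that the invariant is preserved so the recursion continues. The resulting sequence satisfies $\FP(x_i)_{i<\omega} \subseteq S$ by construction, and injectivity of $(x_i)_i$ comes for free from freeness of $u$.

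The main obstacle — and the only place associativity is really used — is establishing the pushing property $x \in A^\star \Rightarrow x\cdot y \in A^\star$ for $y \in (x^{-1}A)^\star$; everything else is bookkeeping with finite intersections in an ultrafilter. I would isolate that as a sub-claim and prove it by directly expanding $(x\cdot y)^{-1}A$ and comparing it with $y^{-1}(x^{-1}A)$, which are equal in a semigroup; the hypothesis of the lemma only says "groupoid", so strictly one must check that the associativity of $\scc X$ guaranteed by Lemma \ref{beta X} (for the semigroup case) is what is being invoked, or else read "groupoid" here as "semigroup" as the surrounding text does. I would flag this point explicitly rather than gloss over it.
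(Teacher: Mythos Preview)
The paper does not actually supply a proof of this lemma; it is stated as a known fact (the Galvin observation, as the surrounding narrative explains) with implicit reference to the standard literature such as~\cite{Hindman Strauss}. Your argument is precisely the standard Galvin--Glazer proof one finds there (e.g.~Lemma~4.14 and Theorem~5.8 of~\cite{Hindman Strauss}): the $A^\star$ construction, the pushing property, and the recursive choice of $x_{n+1}$ inside a finite intersection of $u$-large sets, with freeness of~$u$ ensuring injectivity of the sequence. For semigroups it is correct and complete.

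Your closing caveat is well placed and is the most substantive part of the write-up. The statement literally says ``groupoid'', but $\FP(x_i)_{i<\omega}$ as introduced in Notation~\ref{fs-fp} is only unambiguous in the associative case, and the pushing step genuinely needs $(x\cdot y)^{-1}A = y^{-1}(x^{-1}A)$, i.e.~associativity. The paper's own Remark~\ref{ultraextensions} points to~\cite{Saveliev (idempotents)} for non-associative generalizations, so the author is aware of the issue and is treating it elsewhere; for the purposes of this paper---whose applications are all to the additive groups of polyrings---reading ``groupoid'' as ``semigroup'' is harmless. Flagging it explicitly, as you do, is the right choice.
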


\begin{rmrk}\label{rmk: idempotent} 
1.
The statement converse to Lemma~\ref{fs in idempotents} 
is also true: any such~$S$ belongs to some idempotent 
ultrafilter. Thus ultrafilters each element of which 
includes some $\FP(x_i)_{i<\omega}$ are exactly those 
that belong to $\cl_{\scc X}\{u\in\scc X:u\cdot  u=u\}$,
the closure (in~$\scc X$) of the set of idempotents 
ultrafilters. 
(The latter set is not closed, see~\cite{Hindman Strauss}.)

Let us point out also that the existence of ultrafilters 
$u\in\scc\mathbb N$ such that any $S\in u$ includes a~set 
of form $\FS(x_i)_{i<\omega}$ which {\it itself\/} belongs 
to~$u$, called {\it strongly summable\/} ultrafilters, is 
independent of $\ZFC$ (the Zermelo--Fraenkel set theory); 
see~\cite{Hindman Strauss}, Chapter~12.

2. 
In fact, Lemma~\ref{fs in idempotents} holds 
for arbitrary groupoids, if we define for them 
$\FP(x_i)_{i}$ by putting parentheses in a~right way
(\cite{Saveliev (idempotents),Saveliev (Hindman)}).
\end{rmrk}


Now \ref{free idempotent} and \ref{fs in idempotents} 
together immediately give the general version of 
Hindman's theorem:

\begin{thm}[The Finite Products Theorem, 1975]%
\label{hindman general}
Let $(X,\cdot\,)$ be a~semigroup either without 
idempotents or weakly left cancellative.
For any finite partition of~$X$ there exist a~part~$A$ 
and an infinite sequence $x_0,\dots,x_i,\ldots$ such that 
$$
\FP(x_i)_{i<\omega}\subseteq A.
$$
\end{thm}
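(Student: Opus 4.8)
The plan is to read the theorem off immediately from Corollary~\ref{free idempotent} and Lemma~\ref{fs in idempotents}, which together already do all the work. If $X$ happens to be finite it contains an idempotent element $e$ (a classical fact about finite semigroups), and then the constant sequence $x_i=e$ gives $\FP(x_i)_{i<\omega}=\{e\}$, which lies in a single part of any partition; so from now on I would assume $X$ infinite, which is exactly the setting in which the full strength of Corollary~\ref{free idempotent} is available.

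Fix a finite partition $X=A_0\cup\dots\cup A_{k-1}$. By Corollary~\ref{free idempotent}, applied under the hypothesis that $X$ is infinite and either without idempotents or weakly left cancellative, the ultrafilter extension $(\scc X,\cdot\,)$ has a \emph{free} idempotent $u$, that is, $u\cdot u=u$ with $u$ non-principal. Since $u$ is an ultrafilter it is a prime filter, and since $X=\bigcup_{j<k}A_j\in u$ with the $A_j$ pairwise disjoint, exactly one of them, call it $A$, belongs to~$u$. This singles out the desired ``large'' part.

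Finally I would apply Lemma~\ref{fs in idempotents} to the set $S=A\in u$: because $u$ is a free idempotent of $(\scc X,\cdot\,)$, there is an infinite sequence $(x_i)_{i<\omega}$ in~$A$ such that $\FP(x_i)_{i<\omega}\subseteq A$. This is precisely the conclusion of the theorem, so the argument is complete.

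No step here is genuinely difficult, since the real content has been pushed into the two cited results; the only point deserving care is the appeal to \emph{freeness} of the idempotent. A principal idempotent of $\scc X$ is just the principal ultrafilter at an idempotent element $e\in X$, and Lemma~\ref{fs in idempotents} applied to it would only yield the constant sequence $x_i=e$ and the singleton $\{e\}$ — far too small to witness anything in a partition. Securing a \emph{free} idempotent is exactly where the hypothesis ``without idempotents or weakly left cancellative'' is used: it guarantees that the free ultrafilters form a compact right topological subsemigroup of $\scc X$ to which Ellis' theorem (Theorem~\ref{Ellis}) applies. That application, rather than the bookkeeping above, is where the substance of the argument really sits.
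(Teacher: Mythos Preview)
Your proposal is correct and follows essentially the same route as the paper, which simply says that the theorem is immediate from Corollary~\ref{free idempotent} and Lemma~\ref{fs in idempotents}; you have just unpacked that sentence with the standard ``an ultrafilter picks a part'' step. The only addendum is your separate treatment of finite~$X$, which the paper does not discuss; this is harmless, though note that Lemma~\ref{fs in idempotents} is stated only for \emph{free} idempotents, so strictly speaking it does not apply to principal ones at all rather than ``only yielding the constant sequence''.
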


\begin{exmp}\label{multiplicative}
The multiplicative semigroup 
$(\mathbb N,\cdot\,)$ of natural numbers.
\end{exmp}


\subsection{Generalizations}

After these initial steps, algebra of ultrafilters 
quickly became an advanced area cultivated by many 
prominent authors (Bergelson, Blass, van Douwen, 
Hindman, Leader, Protasov, Strauss, Zelenyuk among 
others). 
Earlier results, including classical Ramsey's 
theorem and van der Waerden's arithmetic progressions 
theorem as well as newer Hales--Jewett's theorem and 
Furstenberg's multiple recurrence theorem, were 
restated by means of this technique. 
This provided a~better understanding of algebraic 
Ramsey theory and leaded to new deep results and 
applications to number theory, algebra, topological 
dynamics, and ergodic theory; most of them have 
no known elementary proofs.

Here we mention only two immediate generalizations of 
Hindman's theorem. The first provides a~simultaneous 
additive and multiplicative version of the theorem (see 
\cite{Hindman Strauss}, Corollary~5.22; the result was 
originally proved in~\cite{Hindman 1979}, an elementary 
proof can be found in~\cite{Bergelson Hindman}):

\begin{thm}[Hindman, 1979]%
\label{addit-multipl}
For any finite partition of~$\mathbb N$ there exist 
a~part~$A$ and two infinite sequences 
$x_0,\dots,x_i,\ldots$ and $y_0,\dots,y_i,\ldots$ 
such that 
$$
\FS(x_i)_{i<\omega}\cup\FP(y_i)_{i<\omega}\subseteq A.
$$
\end{thm}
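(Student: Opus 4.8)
The plan is to realize Theorem~\ref{addit-multipl} as the assertion that a single ultrafilter $u\in\scc\mathbb N$ can be found which is simultaneously idempotent for the additive operation and idempotent for the multiplicative operation; once such a $u$ is in hand, the result follows exactly as Hindman's theorem did, by applying Lemma~\ref{fs in idempotents} to both operations and noting that for any finite partition one part $A$ lies in $u$.

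The first step is to set up the two ultrafilter extensions side by side. By Lemma~\ref{beta X}, $(\scc\mathbb N,+)$ is a compact Hausdorff right topological semigroup, and likewise $(\scc\mathbb N,\cdot\,)$ (working, as in Example~\ref{multiplicative}, with $\mathbb N\setminus\{0\}$ so as to have a weakly left cancellative semigroup and hence, by Corollary~\ref{free idempotent}, free idempotents). Let $E=\{u:u+u=u\}$ be the set of additive idempotents; this is a nonempty closed subset of $\scc(\mathbb N\setminus\{0\})$. The key structural observation is that $(E,\cdot\,)$ is \emph{itself} a compact right topological semigroup: it is closed, hence compact; right multiplication by a fixed element is continuous because it is the restriction of right multiplication in $(\scc\mathbb N,\cdot\,)$; and — this is the crucial algebraic point — $E$ is closed under the multiplicative operation, because by distributivity of $\cdot$ over $+$ in $\mathbb N$ one checks that if $u+u=u$ and $v+v=v$ then $(u\cdot v)+(u\cdot v)=u\cdot v$. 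Verifying this distributive identity at the level of ultrafilters, unwinding the definition of $u\cdot v$ and $u+v$ through their defining membership conditions, is the main technical obstacle: one must track a triple-nested quantifier over $\mathbb N$ and invoke the pointwise distributive law $x(y+z)=xy+xz$ together with the idempotency of the factors at the right moments.

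Granting that $(E,\cdot\,)$ is a compact Hausdorff right topological semigroup, Ellis' theorem (Theorem~\ref{Ellis}) supplies an idempotent $u\in E$: thus $u+u=u$ and $u\cdot u=u$ simultaneously. (A small further argument, choosing $u$ to lie in the closed subsemigroup of free ultrafilters, guarantees $u$ is nonprincipal, so the sequences produced will be genuinely infinite and the finite sums and products nontrivial.) Finally, given a finite partition $\mathbb N=A_1\cup\dots\cup A_k$, exactly one $A_j$ belongs to $u$; applying Lemma~\ref{fs in idempotents} to the additive idempotent $u$ yields an infinite sequence $(x_i)_{i<\omega}$ with $\FS(x_i)_{i<\omega}\subseteq A_j$, and applying it to the multiplicative idempotent $u$ yields $(y_i)_{i<\omega}$ with $\FP(y_i)_{i<\omega}\subseteq A_j$. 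Setting $A=A_j$ completes the proof. The only real work, then, is the distributivity computation showing $E$ is multiplicatively closed; everything else is an assembly of the machinery already developed above.
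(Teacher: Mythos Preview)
Your plan hinges on producing an ultrafilter $u$ that is \emph{simultaneously} an additive and a multiplicative idempotent in $\scc\mathbb N$. This is a well-known open problem (see Hindman--Strauss, Question~13.18), so the strategy cannot be carried out as stated. The two specific steps that break are these.

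First, the set $E=\{u:u+u=u\}$ of additive idempotents is \emph{not} closed in $\scc\mathbb N$; the paper itself notes this in Remark~\ref{rmk: idempotent}. Hence $E$ is not compact and Ellis' theorem does not apply to $(E,\cdot)$ directly. Second, and more seriously, your ``crucial algebraic point'' that $E$ is closed under the extended multiplication fails: the distributive law $u\cdot(v+w)=u\cdot v+u\cdot w$ does \emph{not} survive passage from $\mathbb N$ to $\scc\mathbb N$ for arbitrary ultrafilters $u,v,w$ (only very restricted forms, e.g.\ with $u$ principal, hold). So unwinding the membership conditions as you propose will not yield $(u\cdot v)+(u\cdot v)=u\cdot v$ from $u+u=u$, $v+v=v$.

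The paper's route avoids both obstacles by working instead with the \emph{closure} $\cl E$ and showing it is a left ideal of $(\scc\mathbb N,\cdot)$; this is a genuinely different argument, not a distributivity calculation. Ellis then gives a multiplicative idempotent $v\in\cl E$. Such $v$ need not be an additive idempotent, but by Remark~\ref{rmk: idempotent} every $S\in v$ still contains some $\FS(x_i)_{i<\omega}$, which is all one needs. The final assembly step---pick the part $A$ in $v$ and apply Lemma~\ref{fs in idempotents} multiplicatively, plus the closure characterization additively---then goes through exactly as you describe.
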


The proof is not difficult modulo the above 
observations; it suffices first to show that the set 
$\cl_{\scc\mathbb N}(\{u\in\scc\mathbb N:u=u+u\})$ 
(the closure of the set of additive idempotents) 
forms a~left ideal of the multiplicative semigroup 
$(\scc\mathbb N,\,\cdot\,)$; then apply
Theorem~\ref{Ellis} to pick some $v=v\cdot v$ in~it.

\begin{rmrk}\label{one sequence non-exist}
This result cannot be improved by showing 
the existence of {\it a~single\/} sequence 
$(x_i)_{i<\omega}$ such that 
$\FS(x_i)_{i<\omega}\cup\FP(x_i)_{i<\omega}$ 
is included into a~part of a~given partition. 
In fact, this is impossible even for sums and 
products of all pairs of distinct elements in 
$(x_i)_{i<\omega}$\,; see~\cite{Hindman Strauss}, 
Theorem~17.16. 
\end{rmrk}


Another generalization of Hindman's theorem we 
want to formulate in this overview, is an its 
multidimensional (more precisely, finite-dimensional) 
version, also proved by Hindman in~\cite{Hindman 1979} 
(see also~\cite{Hindman Strauss}, Theorem~18.11;
an ``elementary'' proof was obtained 
in~\cite{Bergelson Hindman}):

\begin{thm}[Hindman, 1979]%
\label{multidimensional}
For any $m,n\in\mathbb N$, 
infinite semigroups $X_0,\ldots,X_n$ 
each of which is either without idempotents 
or weakly left cancellative, 
and finite partition of 
the Cartesian product $\prod_{i\le n}X_i$ 
there exist a~part~$A$, $m$-sequences $(x_{i,k})_{k<m}$ 
in each~$X_i$, $i<n$, and an infinite sequence 
$(x_{n,k})_{k<\omega}$ in~$X_n$ such that 
$$
\bigl(\prod_{i<n}\FP(x_{i,k})_{k<m}\bigr)
\times
\FP(x_{n,k})_{k<\omega}
\subseteq A.
$$
\end{thm}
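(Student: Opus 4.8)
The plan is to prove the theorem by induction on~$n$, peeling off one coordinate at a time: a free idempotent living in the ``current last'' semigroup is used to define a colouring of the product of all the remaining coordinates, to which the inductive hypothesis is applied. Let me first stress why the naive idea fails. One might hope to pick a single free idempotent $u$ in the product semigroup $\prod_{i\le n}X_i$ and apply Lemma~\ref{fs in idempotents}; but, quite apart from the fact that the product need not be without idempotents or weakly left cancellative, this would only produce an infinite sequence of \emph{tuples} $(x_{0,k},\ldots,x_{n,k})_{k<\omega}$, whose set of finite products is $\{(\prod_{k\in F}x_{0,k},\ldots,\prod_{k\in F}x_{n,k}):F\text{ finite nonempty}\}$ --- forcing one and the same index set~$F$ in every coordinate, hence far weaker than the asserted $\bigl(\prod_{i<n}\FP(x_{i,k})_{k<m}\bigr)\times\FP(x_{n,k})_{k<\omega}$.

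For $n=0$ the statement is exactly the Finite Products Theorem (Theorem~\ref{hindman general}) applied to the infinite semigroup~$X_0$, which satisfies the required hypothesis. Assume now the statement with $n$ replaced by~$n-1$, and fix infinite semigroups $X_0,\ldots,X_n$ as in the hypothesis together with a colouring $c\colon\prod_{i\le n}X_i\to\{1,\ldots,r\}$. By Corollary~\ref{free idempotent} choose a free idempotent $u\in\scc X_n$ and define a colouring $f\colon\prod_{i<n}X_i\to\{1,\ldots,r\}$ of the first $n$ coordinates by letting $f(\bar a)$ be the unique colour~$j$ with $\{b\in X_n:c(\bar a,b)=j\}\in u$; this is well defined because $u$ is an ultrafilter. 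Applying the inductive hypothesis to~$f$ and to $X_0,\ldots,X_{n-1}$ yields a colour~$j_0$, finite $m$-sequences $(x_{i,k})_{k<m}$ in each $X_i$ for $i<n-1$, and an infinite sequence $(x_{n-1,k})_{k<\omega}$ in~$X_{n-1}$, with $\bigl(\prod_{i<n-1}\FP(x_{i,k})_{k<m}\bigr)\times\FP(x_{n-1,k})_{k<\omega}\subseteq f^{-1}(j_0)$. Truncating this last sequence to its first $m$ terms keeps us inside $f^{-1}(j_0)$ and leaves finite $m$-sequences $(x_{i,k})_{k<m}$ in \emph{every} $X_i$ with $i<n$, now satisfying $\prod_{i<n}\FP(x_{i,k})_{k<m}\subseteq f^{-1}(j_0)$.

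Here finiteness is used in an essential way. The set $\prod_{i<n}\FP(x_{i,k})_{k<m}$ is finite --- a product of $n$ finite sets --- so we may list it as $\bar t_1,\ldots,\bar t_p$. Since $f(\bar t_q)=j_0$, we have $B_q:=\{b\in X_n:c(\bar t_q,b)=j_0\}\in u$ for each $q\le p$, and therefore $B:=\bigcap_{q\le p}B_q\in u$. By Lemma~\ref{fs in idempotents} there is an infinite sequence $(x_{n,k})_{k<\omega}$ in~$X_n$ with $\FP(x_{n,k})_{k<\omega}\subseteq B$. Then for every $q\le p$ and every $s\in\FP(x_{n,k})_{k<\omega}$ we have $s\in B\subseteq B_q$, hence $c(\bar t_q,s)=j_0$; thus $\bigl(\prod_{i<n}\FP(x_{i,k})_{k<m}\bigr)\times\FP(x_{n,k})_{k<\omega}\subseteq c^{-1}(j_0)$, and $A=c^{-1}(j_0)$ is the desired part. (If the first $n$ coordinates were allowed to carry infinite sequences, $\prod_{i<n}\FP(x_{i,k})$ would be infinite, the intersection $\bigcap_q B_q$ would be infinite and could fail to belong to the free ultrafilter~$u$ --- which is precisely why the theorem keeps these sequences finite.)

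The main obstacle is conceptual rather than computational: one must recognise that the product semigroup is the wrong object to work with, and that the dimension has to be reduced coordinate by coordinate, with the finiteness of the $\FP$-sets in the non-terminal coordinates being exactly what licenses the finite-intersection step against the next idempotent. Once this structure is identified, the only inputs are the existence of free idempotents (Corollary~\ref{free idempotent}), their characteristic finite-products property (Lemma~\ref{fs in idempotents}), and the Finite Products Theorem (Theorem~\ref{hindman general}) for the base case.
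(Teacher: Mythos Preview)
The paper itself does not prove Theorem~\ref{multidimensional}; it is quoted from \cite{Hindman Strauss}, Theorem~18.11, and then used as a black box in the proof of Theorem~\ref{key thm}. So there is no ``paper's own proof'' to compare against.

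Your argument is correct and is in fact the standard one: induction on~$n$, using a free idempotent $u\in\scc X_n$ (available by Corollary~\ref{free idempotent}) to push the colouring down to $\prod_{i<n}X_i$, applying the inductive hypothesis there, truncating the resulting infinite sequence in~$X_{n-1}$ to its first $m$~terms, and then exploiting the finiteness of $\prod_{i<n}\FP(x_{i,k})_{k<m}$ to take a finite intersection of $u$-large sets before a last appeal to Lemma~\ref{fs in idempotents}. Each step checks out; in particular, the truncation is harmless because $\FP(x_{n-1,k})_{k<m}\subseteq\FP(x_{n-1,k})_{k<\omega}$, and the finite-intersection step is exactly where the hypothesis ``$m$-sequences in the non-terminal coordinates'' is consumed. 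Your parenthetical explanations --- why a single idempotent in the product semigroup would only give diagonal finite products, and why an infinite intersection could drop out of~$u$ --- accurately identify the two conceptual hurdles and are consistent with Remark~\ref{two sequences non-exist}.
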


\begin{rmrk}\label{two sequences non-exist}
This result cannot be improved by showing 
the existence of {\em two} infinite sequences, say, 
$(x_{n-1,k})_{k<\omega}$ and $(x_{n,k})_{k<\omega}$\,, 
generating such sets of finite products; 
see~\cite{Hindman Strauss}. 
\end{rmrk}

Theorem~\ref{multidimensional} will be crucial 
in our application to topologization of certain 
universal algebras discussed in Section~2.

For further information about the topic, we refer 
the reader to the literature. In particular, 
\cite{Hindman Strauss}~is a~comprehensive treatise 
on algebra of ultrafilters, with an historic 
information and a~vast list of references; 
\cite{Protasov}~provides a~clear introduction 
to this area; 
\cite{Comfort Negrepontis}~is the classical 
textbook on the general theory of ultrafilters.

\begin{rmrk}\label{reverse} 
There are investigations of Hindman's theorem 
and related assertions in point of view of their 
proof-theoretic and set-theoretic strength.

An arithmetic version of Hindman's theorem was 
studied in reverse mathematics. It was shown 
in~\cite{Blass et al} that its proof-teoretic strength 
(over $\RCA_0$) lies between $\ACA_0$ and $\ACA^{+}_0$. 
%
%
%
For more recent results see 
\cite{Towsner,Dzhafarov et al,Carlucci et al} 
and the literature mentioned there. 

Very recently it was shown~\cite{Tachtsis} that 
Ellis' theorem (Theorem~\ref{Ellis} here) follows 
from the Boolean Prime Ideal Theorem, thus showing 
it is weaker than the full~$\AC$ (the Axiom of Choice). 
The article~\cite{Tachtsis} contains also a~series 
of other relevant results. 
\end{rmrk}

\begin{rmrk}\label{ultraextensions}
In~\cite{Saveliev (idempotents)}, Theorems \ref{Ellis},
\ref{hindman general},~\ref{addit-multipl} were 
generalized to certain non-associative algebras. 
Further generalizations of Hindman's theorem involving 
larger sets of finite products (taken rather along 
partially ordered sets than sequences) were studied 
in~\cite{Saveliev (Hindman)}.

The above extension of semigroups by ultrafilters 
is a~partial case of a~certain canonical procedure 
of ultrafilter extension of arbitrary first-order 
models, which was defined 
in~\cite{Saveliev,Saveliev (inftyproc)}; 
some historical information can be found in 
Introduction of~\cite{Poliakov Saveliev 2021}. 
\end{rmrk}


\section{Finite sums and topologizations}

In this section, we apply the multidimensional 
generalization of the Finite Sums Theorem, which was 
formulated above (Theorem~\ref{multidimensional}), to 
the problem of topologizability of universal algebras. 

First we recall the origin of the problem and results 
obtained earlier; then we define a~certain class of 
universal algebras, called here ``polyrings'', which 
includes various classical algebras like rings, 
differential rings, algebras over rings, and others, 
and the Zariski topology of these algebras and their 
finite powers. 

After this, we formulate the main result of this 
section (Theorem~\ref{main thm}), which states that 
for any infinite polyring~$K$, any map of $K^n$ 
into~$K$ defined by a~term in $n$~variables is 
a~closed nowhere dense subset of~$K^{n+1}$ in its 
Zariski topology. In particular, $K^n$~is closed 
nowhere dense in~$K^{n+1}$, and a~fortiori, all 
the Zariski spaces~$K^n$ are non-discrete. Then 
we state that all countable polyrings are Hausdorff 
topologizable, and conclude the section with an 
outline of a~proof of Theorem~\ref{main thm}.

\subsection{Topologizations of algebras}

As usual, a~{\em universal algebra} is a~set with a~family 
of operations on it, i.e.~essentially a~first-order model 
in a~vocabulary consisting only of function symbols.%
\footnote{The term ``universal algebra'' (or sometimes 
``general algebra'') is also used for the area studying 
such algebras; in this meaning it apparently first appeared 
in the book~\cite{Whitehead} by Whitehead, where it was 
attributed to Sylvester.} We shall denote them by 
$(X,\Omega)$ or variants of this notation. We assume that 
any of the operations is of an arbitrary finite arity 
(although sometimes algebras with infinitary operations 
are found in the literature), where $0$-ary operations 
are constants. 
For more on universal algebras we refer the reader to 
the standard textbooks 
\cite{Burris Sankappanavar, Cohn, Gratzer, Maltsev 1970}.

Recall now some definitions and facts concerning 
the topologization problem.

\begin{dfn}\label{def: topologizability}
1. 
A~universal algebra is a~{\em topological algebra} 
iff it is endowed with a~topology in which all its 
main operations are continuous. In this case, the 
algebra and topology are said to be {\em compatible}.

2. 
A~universal algebra is $T_i$-{\it topologizable\/} 
iff it admits a~non-discrete $T_i$-topology which 
turns it into a~topological algebra.
\end{dfn}

Topological universal algebras were defined by 
Maltsev in~\cite{Maltsev}. A~topologizability 
problem was first posed by Markov\:(Jr.)~%
in~\cite{Markov 1944}--\cite{Markov 1946}, 
who asked whether any group admits a~non-discrete 
Hausdorff topology in which its multiplication and 
inversion become continuous. He (implicitly) defined 
a~$T_1$-topology on a~group, called now its Zariski 
topology, and proved that any countable group is 
$T_2$-topologizable iff its Zariski topology is 
non-discrete.

Later it was proved that the answer is 
affirmative for certain classes of groups, 
as Abelian~\cite{Kertesz Szele} 
and free~\cite{Zelenyuk 2011}, 
and negative in general; for uncountable groups 
this was proved under~$\CH$ in~\cite{Shelah} and 
without~$\CH$ in~\cite{Hesse}, for countable groups 
in~\cite{Olshanskii} (based on Adian's construction):

\begin{thm}\label{topologizability of groups}
$\phantom{.}$
\\
1~{\rm(Kert{\'e}sz and Szele, 1953, Zelenyuk, 2011)}. 
All Abelian groups as well as and all free groups 
are $T_2$-topologizable.
\\
2~{\rm(Shelah, 1976, Hesse, 1979, Olshanski, 1980)}. 
There exist groups of any infinite cardinality 
that are not $T_2$-topologizable. 
\end{thm}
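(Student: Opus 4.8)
The plan is to establish the two parts by opposite strategies: the positive statement in Part~1 by \emph{exhibiting} topologies --- each group will be embedded, as a dense subgroup, into a compact Hausdorff group --- and the negative statement in Part~2 by \emph{excluding} them, via Markov's criterion recalled above together with substantial combinatorial group theory. Two soft facts underlie the positive side. First, a dense subgroup $D$ of a Hausdorff topological group $G$ is non-discrete as soon as $G$ is: if some open $U\ni e$ met $D$ only in~$e$, then, taking open symmetric $V\ni e$ with $VV\subseteq U$, any two points of the dense set $V\cap D$ would differ by an element of $VV^{-1}\cap D\subseteq U\cap D=\{e\}$, so $V\cap D$, and hence $V$, would be a single point and $G$ discrete. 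Second, an infinite compact Hausdorff group is never discrete, because a discrete subgroup of a compact group is finite (its left translates of a small symmetric neighbourhood of~$e$ are pairwise disjoint and of equal positive Haar measure). Hence it suffices, for a given infinite group, to embed it densely into a compact Hausdorff group.

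For an infinite Abelian group $A$ I would use its Bohr compactification: the discrete group $A$ is maximally almost periodic --- its continuous characters $A\to\mathbb T$ into the circle group separate points, by Pontryagin duality --- so the diagonal map into $\mathbb T^{\mathrm{Hom}(A,\mathbb T)}$ is an injective homomorphism whose image has compact Hausdorff closure and is dense in it; thus $A$ acquires a non-discrete Hausdorff group topology. For a free group $F$ the same scheme works with the profinite topology: $F$ is residually finite (a classical fact), so it embeds densely into its profinite completion $\widehat F$, which is compact Hausdorff and, being infinite, non-discrete; alternatively one invokes Zelenyuk's explicit construction of such a topology on~$F$. (For \emph{countable} Abelian groups one may instead combine Markov's equivalence with the non-discreteness of the Zariski topology, the Abelian instance of Theorem~\ref{main thm} below, but the embedding argument handles all cardinalities uniformly.)

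The negative part carries the real weight, and I would route it through Markov's theorem: a countable group is $T_2$-topologizable iff its Zariski topology is non-discrete, so it suffices to build a countable group $G$ whose Zariski topology \emph{is} discrete --- concretely, a group together with finitely many words $w_1(x),\dots,w_k(x)$ in one variable with constants in $G$ for which $\{x\in G:w_i(x)\ne e\text{ for all }i\}=\{e\}$, so that $\{e\}$, and then by translation every point, is Zariski-open. This is exactly Olshanskii's countable example, and producing it is the crux of the whole theorem: it relies on the Adian--Olshanskii theory of groups presented by graded systems of relations (periodic groups, van Kampen diagrams, small cancellation over free Burnside groups), which furnishes a group rigid enough that a finite system of one-variable equations pins down the identity. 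For the uncountable cardinalities one cannot simply inflate a countable example --- e.g.\ a free product with an arbitrary group can reintroduce nontrivial convergent sequences --- so one runs instead the transfinite constructions of Shelah (under~$\CH$) and of Hesse (in $\ZFC$): the group is built by recursion of length the prescribed cardinal, at each stage imposing relations that kill one more candidate nontrivial convergent filter, so that no non-discrete Hausdorff group topology survives at the end. The principal obstacles are thus entirely on this side --- the Adian--Olshanskii combinatorics behind Olshanskii's group, and the bookkeeping in the Shelah/Hesse recursions --- while Part~1 is essentially formal once the embeddings are at hand.
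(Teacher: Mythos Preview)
The paper does not prove this theorem: it is stated as a background result with attributions to \cite{Kertesz Szele}, \cite{Zelenyuk 2011}, \cite{Shelah}, \cite{Hesse}, \cite{Olshanskii}, and no argument is supplied. So there is no paper-side proof to compare against.

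That said, your outline is sound. For Part~1 the embedding strategy is correct and efficient: the Bohr compactification handles infinite Abelian groups (characters of a discrete Abelian group separate points), and the profinite completion handles free groups (residual finiteness of free groups is classical, so the profinite topology is Hausdorff and non-discrete). Your two preliminary lemmas --- that a dense subgroup of a non-discrete Hausdorff group is non-discrete, and that an infinite compact Hausdorff group is non-discrete --- are correct as stated. For Part~2 you correctly identify what each of the three sources contributes and where the genuine difficulty lies: Olshanskii's countable example via Adian's graded-relations machinery, and the Shelah/Hesse transfinite constructions for uncountable cardinalities. One small caution if you intend to fill in details: the assertion ``of \emph{any} infinite cardinality'' requires verifying that the Hesse construction (or a variant) produces an example at every uncountable cardinal, not merely at some; this holds, but is not automatic from the existence of a single example.
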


There are known other classes of groups in which 
all countable groups are $T_2$-topologizable 
(see~\cite{Kotov}, Corollary~3). 
Other examples of non-topologizable groups 
(e.g.~non-topologizable torsion-free groups) 
can found in~\cite{Klyachko et al}. For further 
studies of topologizability of groups and other 
related problems posed by Markov 
in~\cite{Markov 1944}--\cite{Markov 1946} 
we refer the reader 
to~\cite{Banakh Protasov}--\cite{Dikranjan Toller}
and the literature there.

\begin{rmrk}
Zelenyuk proved~\cite{Zelenyuk} that any infinite 
group admits a~non-discrete zero-dimen\-sional 
$T_3$-topology in which all its left and right 
shifts and inversion are continuous. 
Cf.~also Section~9.2 in~\cite{Hindman Strauss},  
which discusses left topologizability of groups 
obtained by idempotent ultrafilters over them. 
\end{rmrk}


The situation with topologization of rings slightly 
differs from the case of groups.
It is still possible to prove that any countable 
ring is $T_2$-topologizable iff its Zariski topology 
is non-discrete. In~1970, Arnautov obtained the 
following principal results:
the Zariski topology of every infinite ring 
is non-discrete~\cite{Arnautov (countable rings)}
and so countable rings admit Hausdorff topologies,
the same holds for all commutative 
rings~\cite{Arnautov (commutative rings)}, but not 
in general~\cite{Arnautov (nontopologizable rings)}:

\begin{thm}[Arnautov, 1970]
\label{topologizability of rings}
$\phantom{.}$
\\
1.
All infinite rings have non-discrete 
Zariski topologies.
\\
2. 
All countable rings as well as and 
all commutative rings are $T_2$-topologizable.
\\
3.
There exist rings of an uncountable cardinality
that are not $T_2$-topologizable. 
\end{thm}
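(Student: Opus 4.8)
The three parts are of quite different character; the plan is to derive part~1 directly from the Hindman machinery of Section~1.3, to obtain the countable case of part~2 from it by Markov's reduction, and to treat the commutative case of part~2 and the whole of part~3 as separate constructions (the last being the genuinely hard point). For part~1, fix an infinite ring $K$; it is enough to show that $\{0\}$ is not open in the Zariski topology, since then that topology is not discrete. Now $\{0\}$ is open exactly when some basic open neighbourhood of~$0$ equals $\{0\}$, i.e.\ when there are one-variable terms $p_1,\dots,p_k$ over~$K$ (noncommutative polynomial expressions with coefficients in~$K$, possibly with a~constant term) with $p_i(0)\neq 0$ for all~$i$ such that every nonzero $x\in K$ satisfies $p_i(x)=0$ for some~$i$; so I would assume such a~family exists and derive a~contradiction. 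Colour each nonzero~$x$ by the least~$i$ with $p_i(x)=0$ and adjoin $\{0\}$ as one further class, obtaining a~finite partition of the additive group~$(K,+)$. Being an infinite group, $(K,+)$ is weakly left cancellative, so Corollary~\ref{free idempotent} provides a~free idempotent $u\in\scc K$; the class belonging to~$u$ cannot be the finite set $\{0\}$, hence is a~colour class $P_j\subseteq\{x:p_j(x)=0\}$, and by Lemma~\ref{fs in idempotents} there is an infinite sequence $(x_i)_{i<\omega}$ with $\FS(x_i)_{i<\omega}\subseteq P_j$. Thus $p_j$ vanishes at every finite sum $x_{i_0}+\dots+x_{i_n}$, $i_0<\dots<i_n<\omega$.

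To reach the contradiction I would use finite differences. Let $d$ be the degree of $p_j$ in~$x$ and put $(\Delta_h q)(x)=q(x+h)-q(x)$. Since $\Delta_h$ carries a~monomial $c_0 x c_1 x\cdots x c_m$ to a~sum of monomials of $x$-degree $<m$ and kills constants, the $(d+1)$-fold composite $\Delta_{x_0}\cdots\Delta_{x_d}p_j$ is identically zero. Evaluating its inclusion--exclusion expansion at~$0$,
\[
0=(\Delta_{x_0}\cdots\Delta_{x_d}\,p_j)(0)=\sum_{S\subseteq\{0,\dots,d\}}(-1)^{d+1-|S|}\,p_j\!\left(\sum_{i\in S}x_i\right),
\]
and for each nonempty $S$ the argument $\sum_{i\in S}x_i$ lies in $\FS(x_i)_{i<\omega}\subseteq P_j$, so $p_j$ vanishes on it; only the term for $S=\emptyset$ survives, giving $(-1)^{d+1}p_j(0)=0$, contrary to $p_j(0)\neq 0$. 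Hence $\{0\}$ is not open, which settles part~1. This is precisely the one-variable case of the argument behind the paper's main result, Theorem~\ref{main thm}; a~term in $n\ge 2$ variables would instead call for the multidimensional Hindman theorem, Theorem~\ref{multidimensional}, to furnish $n$~simultaneous finite-sum sets, whereas here one application of Corollary~\ref{free idempotent} and Lemma~\ref{fs in idempotents} together with the difference calculus suffice (one could even replace the infinite sequence by its first $d+1$ terms and invoke only a~Folkman-type statement).

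For a~countable ring the non-discreteness just proved is all that is needed: Markov's reduction — his group argument transfers essentially verbatim to rings — shows that a~countable ring is $T_2$-topologizable iff its Zariski topology is non-discrete, the substantive direction being the familiar recursion that runs through the countably many pairs of distinct points and builds a~neighbourhood base at~$0$ out of ``large'' subsets closed under $+$, $-$ and $\cdot$ so as to separate all pairs while keeping these operations continuous. This does not cover the commutative case, where the ring may be uncountable; there the plan is to follow Arnautov and exploit commutativity to write down directly a~coherent neighbourhood base at~$0$ yielding a~Hausdorff ring topology, and I expect making this uncountable family of neighbourhoods simultaneously compatible with multiplication to be the main obstacle in part~2. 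Part~3 does not follow from anything above: one must exhibit an uncountable ring on which the joint continuity of addition and multiplication forces every neighbourhood of~$0$ to be the whole ring, in the spirit of the Shelah/Olshanskii non-topologizable groups. Engineering such a~ring and verifying the collapse is, I expect, the hardest part of the theorem, and is the one point I~would take from the cited construction rather than redo.
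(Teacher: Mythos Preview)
The paper does not actually prove this theorem: it is stated as a historical result of Arnautov with citations to his original papers, and no proof is given in the text. So there is no ``paper's own proof'' to compare against directly.

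That said, your argument for part~1 is correct, and it is precisely the one-variable specialisation of the proof the paper later gives for its Main Theorem (Theorem~\ref{main thm}) in Section~2.5: your finite-difference step is the $n=1$ instance of the Key Lemma (Lemma~\ref{key lmm}), and your use of a single Hindman sequence is what the paper attributes to Protasov (see the opening of Section~2.2 and the footnote there). You even anticipate the paper's remark that the multidimensional version, Theorem~\ref{multidimensional}, is needed only when several variables are in play. Your observation that Folkman's theorem already suffices (only $d{+}1$ terms are used) is also sound, though neither the paper nor the cited sources bother to optimise this.

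For part~2, the countable case is handled in the paper not via a direct Markov-style recursion but by invoking the general sufficient condition of Dutka--Ivanov (Theorem~\ref{hausdorff polyring}); your sketch of the Markov reduction is the classical route and is fine. For the commutative case of part~2 and for part~3 you correctly identify that these are separate constructions due to Arnautov which the paper merely cites and does not reproduce; deferring to the literature there is exactly what the paper does.
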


A~survey on topologizability of rings and modules 
can be found in~\cite{Arnautov et al}, Chapter~5. 


The topologizability problem was studied for other 
algebras. Prior the negative solution was obtained 
for groups, it was done for groupoids~\cite{Hanson} 
and semigroups~\cite{Taimanov semi}. For universal 
algebras this was studied in~\cite{Podewski}. That 
countable algebras admit Hausdorff topologizations 
iff their Zariski topologies are non-discrete, was 
shown for unoids (i.e.~algebras with arbitrary families 
of unary operations) in~\cite{Banakh Protasov Sipacheva}. 
In~\cite{Taimanov} this fact was announced 
for all universal algebras; the proof was provided  
in~\cite{Kotov} and completed in~\cite{Dutka Ivanov}. 
Moreover, the latter paper proves this fact for 
all first-order models (requiring that all 
their relations should be closed), and provides 
a~sufficient condition for $T_2$-topologizability 
of models of any cardinality; we shall use this 
condition below (Theorem~\ref{T3 polyring}). 

The $T_1$-topologizability generally does 
not imply the $T_2$-topologizability 
(see e.g.~\cite{Palyutin et al}). For certain varieties 
of algebras, however, any compatible 
topology satisfying a~weaker separability axiom 
in fact satisfies a~stronger one.  
The implication from $T_0$ to $T_1$ holds in a~variety 
iff it is congruence $n$-permutable for some~$n$ (the 
``if'' and the ``only if'' parts were proved in 
\cite{Gumm 1984} and \cite{Coleman 1997}, respectively). 
The implication from $T_0$ to $T_2$ holds in 
congruence permutable varieties \cite{Taylor 1977};  
this extends to congruence $3$-permutable
but not to $4$-permutable varieties
(\cite{Gumm 1984} and \cite{Coleman 1996}, respectively).
The congruence modularity suffices for an 
$n$-permutable variety to satisfy this implication 
\cite{Kearnes Sequeira}, and in certain subclasses 
of varieties it is also necessary~\cite{Bentz 2006}. 
The implication from $T_0$ to $T_3$ holds for 
groups~\cite{Montgomery Zippin} (we shall use this 
quasigroups~\cite{Chein et al}. A~characterization 
fact in Theorem~\ref{T3 polyring}) and, moreover, 
of this implication in a~special class of varieties 
is provided in~\cite{Bentz 2007}.


\subsection{Polyrings}

In~\cite{Protasov} Protasov gave an elegant proof 
of the non-discreteness of the Zariski topologies 
of rings by using Hindman's Finite Sums Theorem.
\footnote{
It is worth to note that the original proof 
in~\cite{Arnautov (nontopologizable rings)} used
van der Waerden's Arithmetic Progression Theorem, 
another standard result having a~short proof via 
ultrafilter algebra (which can be found 
in~\cite{Hindman Strauss} or~\cite{Protasov}). 
Likewise mentioned Folkman's theorem, 
van der Waerden's theorem is also an immediate 
consequence of Rado's theorem on systems of linear 
equations.
}
Following close ideas, but replacing this theorem 
with its stronger multidimensional version 
(Theorem~\ref{multidimensional}), we shall 
state stronger facts, from which will follow 
the non-discreteness of the Zariski topologies 
for universal algebras of a~much wider class, 
called here ``polyrings''. Moreover, it will 
follow that all finite powers of these algebras 
have non-discrete Zariski topologies, and even 
that for all finite~$n$, the $n$-th power is 
closed nowhere dense in the $(n+1)$-th power.

\begin{dfn}\label{def: polyring} 
A~universal algebra $(K,0,-,+,\Omega)$ is 
a~{\it polyring\/} iff $(K,0,-,+)$ is an Abelian group 
and any operation $F\in\Omega$ (of an arbitrary 
positive arity) is distributive w.r.t.\,the addition, 
i.e.~the additive shifts
$$
x\mapsto F(a_0,\ldots,a_{i-1},x,a_{i+1},\ldots,a_{n-1})
$$
are endomorphisms of the group $(K,0,-,+)$, 
for all \text{$i<n$} and 
$a_0,\ldots,a_{i-1},$ $a_{i+1},\ldots,a_{n-1}\in K$. 
(The case when $\Omega$ is empty is not excluded, 
so all Abelian groups are polyrings. On the other hand, 
we can assume that $\Omega$ contains constants for all 
points of~$K$, and in fact, we shall assume this in
Definition~\ref{def zariski}. The operation~$-$ is 
assumed to be unary, but we use the common notation 
$x-y$ meaning $x+(-y)$ when convenient.)
\end{dfn}

\begin{exmp}\label{exm: polyring}
Various classical algebras can be considered 
as special instances of polyrings: 
Abelian groups with operators, 
modules, Lie algebras, rings, 
differential rings, algebras over a~ring, 
Boolean algebras with operators 
(in sense of~\cite{Jonsson Tarski}), etc. 
\end{exmp}


Polyrings are not a~new class of universal algebras. 
Recall that {\em (multioperator) $\Omega$-groups} 
are groups, not necessarily Abelian, with operations 
$F\in\Omega$ of arbitrary arities satisfying 
$F(0,\ldots,0)=0$. This class was introduced 
in~\cite{Higgins 1956}, along with its special 
subclasses including distributive $\Omega$-groups 
(with $F\in\Omega$ distributive in the sense of 
Definition~\ref{def: polyring}) and, among them, 
those in which the additive group is Abelian.
The latters, known in the literature as 
``(multioperator) $\Omega$-rings'', are obviously 
what is called here by the shorter name ``polyrings''. 
Reviews on studies of this class, and on an even 
narrower class of so-called ``$\Omega$-algebras'', 
can be found in~\cite{Kurosh 1969,Baranovich Burgin}.


\begin{lmm}\label{lm: expansion}
There are two canonical expansion procedures:
\\
1. 
For every Abelian group $(K,0,-,+)$ there exists 
the {\it most expanded\/} polyring $(K,0,-,+,\Omega)$, 
i.e.~such that for any polyring $(K,0,-,+,\Omega')$ 
we have $\Omega'\subseteq\Omega$.
\\
2. 
For every universal algebra $(K',\Omega')$ 
there exists a~polyring $(K,0,-,+,\Omega)$ such that 
$(K',\Omega')$ is embedded into $(K,\Omega)$.
\end{lmm}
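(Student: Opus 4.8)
The plan is to treat the two parts separately; both become routine once the right objects are identified.

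For part~1 I would let $\Omega$ be the collection of \emph{all} finitary operations on $K$ that are distributive with respect to $+$ in each argument, i.e.\ all maps $F\colon K^n\to K$ (with $n\in\mathbb N$) such that every additive shift $x\mapsto F(a_0,\ldots,a_{i-1},x,a_{i+1},\ldots,a_{n-1})$ is an endomorphism of $(K,0,+)$. For each fixed $n$ the maps $K^n\to K$ form a set, and there are only countably many arities, so $\Omega$ is a set, and by construction $(K,0,+,\Omega)$ is a polyring. If $(K,0,+,\Omega')$ is any polyring on the same group, then by Definition~\ref{def: zariski} every $F\in\Omega'$ is distributive in each argument, hence $F\in\Omega$; so $\Omega'\subseteq\Omega$. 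Equivalently, $\Omega$ is the union of all families $\Omega'$ turning $(K,0,+)$ into a polyring, this union again meeting the distributivity requirement, which also yields uniqueness of the most expanded polyring. I would note in passing that distributivity forces $F(a_0,\ldots,a_{i-1},0,a_{i+1},\ldots,a_{n-1})=0$ for every $F\in\Omega$, so that every polyring is an Abelian $\Omega$-group, as stated in Example~\ref{exm: zariski}.

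For part~2, given a universal algebra $(K',\Omega')$, I would take $K$ to be the free Abelian group on the set $K'$, with canonical basis $(e_a)_{a\in K'}$, and let $\iota\colon K'\to K$ be the injection $a\mapsto e_a$. The key point is that a map out of a finite power of a free Abelian group which is additive in each argument is freely and uniquely determined by its values on tuples of basis elements: by induction on the arity $n$, applying the universal property of the free Abelian group one coordinate at a time, for every function $g\colon (K')^n\to K$ there is a unique $G\colon K^n\to K$, distributive in each argument, with $G(e_{a_0},\ldots,e_{a_{n-1}})=g(a_0,\ldots,a_{n-1})$ for all $a_0,\ldots,a_{n-1}\in K'$. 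Taking $g(a_0,\ldots,a_{n-1})=e_{f(a_0,\ldots,a_{n-1})}$ gives, for each $f\in\Omega'$ of arity $n\ge 1$, a distributive operation $F_f\colon K^n\to K$ with $F_f(\iota(a_0),\ldots,\iota(a_{n-1}))=\iota\bigl(f(a_0,\ldots,a_{n-1})\bigr)$; for a nullary $f\in\Omega'$ with value $c\in K'$ I would set $F_f()=e_c$ (the distributivity requirement being then vacuous). Putting $\Omega=\{F_f:f\in\Omega'\}$ with matching arities, the structure $(K,0,+,\Omega)$ is a polyring by construction, and $\iota$ is an embedding of the universal algebra $(K',\Omega')$ into $(K,\Omega)$, since it is injective and carries each $f$ to $F_f$.

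The only step with genuine content — the one I would actually write out — is the extension principle used in part~2: that a function defined on tuples of basis elements of free Abelian groups extends uniquely to a map additive in each argument. This is the standard iterated-universal-property argument, proved by induction on $n$; everything else is bookkeeping, and the degenerate cases ($K'=\emptyset$, or $\Omega'$ containing nullary operations) are immediate.
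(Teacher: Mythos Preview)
Your proposal is correct and matches the paper's approach: the paper does not give a formal proof of this lemma, but the remark immediately following it indicates exactly the construction you use for part~2 (the ``ring of formal sums with integer coefficients'' is precisely the free Abelian group on~$K'$, and ``using products of $n$~integers'' is the explicit formula for the multiadditive extension you obtain via the iterated universal property). Part~1 is not discussed at all in the paper, and your choice of~$\Omega$ as the set of all multiadditive finitary operations is the only reasonable one.
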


Clause~2 of Lemma~\ref{lm: expansion} generalizes 
the standard construction of a~groupoid ring, 
which provides the ring of formal sums with 
integer coefficients whose multiplicative groupoid 
extends a~given groupoid; in the general case for 
each $n$-ary operation $F'\in\Omega'$ we define 
the $n$-ary operation $F\in\Omega$ by using 
products of $n$~integers.


\subsection{Zariski topologies}

As said above, the Zariski topology on groups 
was implicitly considered by Markov in 
\cite{Markov 1944}--\cite{Markov 1946}. 
The first explicit description of it, under 
the name of ``verbal topology'', was given 
in~\cite{Bryant}. The name ``Zariski topology'' 
became standard after the paper~\cite{Baumslag et al}, 
in which the authors developed algebraic geometry 
over abstract groups; 
in analogy with classical algebraic geometry over 
fields they defined the Zariski topologies on finite
powers~$G^n$ of a~group~$G$ by using solution sets
of $n$~variables equations (the case $n=1$ gives 
the verbal topology of~\cite{Bryant}). 
Later this approach, together with close ideas 
in~\cite{Plotkin 1996,Plotkin 1997}
resulted in universal algebraic geometry; 
see~\cite{Daniyarova et al 2008,Daniyarova et al 2011} 
and the references there. The Zariski topologies 
of universal algebras were considered 
in~\cite{Podewski},~\cite{Taimanov}, later 
in~\cite{Kotov}, and generalized to arbitrary 
first-order models in~\cite{Dutka Ivanov}. 
We mention also~\cite{Zilber} providing an abstract,
model-theoretic approach to Zariski topologies of 
classical algebraic geometry.

For simplicity, here we define only the Zariski 
topologies of polyrings, although the general 
definition is not much more complicated (it 
involves the solutions of all atomic formulas).

Recall the standard definition of universal algebra (see 
e.g.~\cite{Maltsev 1970} or~\cite{Burris Sankappanavar}): 
a~{\em term} is an element of the smallest set that 
contains all variables and constants and closed under 
applying function symbols, i.e.~if $F$ is an $n$-ary 
function symbol and $F_0,\ldots,F_{n-1}$ are terms 
then $F(F_0,\ldots,F_{n-1})$ is a~term.%
\footnote{Sometimes terms are called ``words'' 
(e.g.~in~\cite{Higgins 1956}), which corresponds to 
the name ``verbal topology'', or else ``polynomial 
symbols'' (e.g.~in~\cite{Gratzer}), but we reserve 
here the name ``polynomials'' for terms of a~special 
kind, similar to ordinary polynomials in rings 
(Definition~\ref{dfn polynomial}).}
A~term is {\em closed} iff no variables occur in it, 
i.e.~iff it is constructed only from constants.

\begin{dfn}\label{def zariski}
Let $K$~be a~polyring and $n\in\mathbb N$.

1. 
If $F\in K[x_1,\ldots,x_n]$ is a~term 
in $n$~variables, let 
$$
S_F=
\bigl\{
(a_1,\ldots,a_n)\in K^n:F(a_1,\ldots,a_n)=0
\bigr\}
$$ 
denote the set of its {\it roots\/}, i.e.~solutions 
of the equation $F(x_1,\ldots,x_n)=0$ in~$K$. 
Finite unions of sets of form~$S_F$ 
are called {\it algebraic\/}.

2. 
A~set $S\subseteq K^n$ is {\it closed in 
the Zariski topology on\/}~$K^n$ iff $S$~is 
an intersection of algebraic sets.
In other words, sets of roots of equations 
in $n$~variables form a~closed subbase of the 
Zariski topology on~$K^n$, and the resulting 
algebraic sets a~closed base of the topology. 

3.
Usually, the Zariski topology is supposed to be 
a~$T_1$-topology, i.e.~each point is assumed to be 
definable by some term; obviously, it suffices to 
expand $\Omega$ by constants for all points. 
We follow this convention. 
\end{dfn}

So our Zariski topology on~$K$ 
is a~$T_1$- (but not necessarily~$T_2$-)topology, and 
in it, as easy to see, all shifts are continuous. 
Moreover:


\begin{lmm}\label{lm: zariski properties} 
Let $K$~be any polyring and $n\in\mathbb N$. Then:
\begin{itemize}
\setlength\itemsep{-0.39em}
\item[(i)] 
the Zariski topology on~$K^n$ is a~$T_1$-topology 
in which all maps of $K^n$ into~$K$ defined by 
terms in $n$~variables are continuous;
\item[(ii)] 
the Zariski topology on $K^{n+1}$ includes the 
product of the Zariski topologies on $K^n$ and~$K$, 
and can be strictly finer
(e.g.~this is so whenever $K$~is an infinite field);
\item[(iii)] 
the space~$K^n$ is homeomorphic to 
$K^n\!\times\!\{0\}\subseteq K^{n+1}$ 
(and will be identified with it below);
\item[(iv)] 
the space~$K^n$ is homogeneous. 
\end{itemize}
\end{lmm}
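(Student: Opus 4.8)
The plan is to verify the four clauses in turn, each reducing to the single bookkeeping observation that substituting terms into a term again produces a term; consequently the preimage of a subbasic closed set of the target under each of the maps below is again a subbasic closed set, and, since preimages commute with finite unions and arbitrary intersections, this yields continuity of the map. I will use throughout that, as the notation $K[x_1,\dots,x_n]$ in Definition~\ref{def zariski} indicates, terms in $n$ variables may involve constants from $K$. For~(i): since $(K,0,+)$ is a group, $\bar x\mapsto x_i-a_i$ is a term in $n$ variables for each $i\le n$ and each $a_i\in K$, so the singleton $\{(a_1,\dots,a_n)\}=\bigcap_{i\le n}S_{x_i-a_i}$ is Zariski-closed, which is the $T_1$ property; and if $F\colon K^n\to K$ is given by a term and $G$ is a term in one variable, then $G(F)$ (obtained by substituting $F$ for the variable of $G$) is a term in $n$ variables with $F^{-1}(S_G)=S_{G(F)}$, so $F$ is continuous.

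For~(ii): by the universal property of the product topology it suffices that both coordinate projections $\pi\colon K^{n+1}\to K^n$ and $\rho\colon K^{n+1}\to K$ be Zariski-continuous, and indeed $\pi^{-1}(S_F)=S_{\widehat F}$ and $\rho^{-1}(S_G)=S_{\widehat G}$, where $\widehat F$ (resp.\ $\widehat G$) is $F$ (resp.\ $G$) read as a term in $n+1$ variables that ignores the last coordinate (resp.\ depends only on it). For strictness, let $K$ be an infinite field: then the Zariski topology on $K=K^1$ is the cofinite topology (a nonzero one-variable polynomial has only finitely many roots over a field, and every finite set is a root set), so in the product topology on $K\times K$ every basic neighbourhood $U\times V$ of a point off the diagonal meets the diagonal $\Delta$, since $U\cap V$ is cofinite and hence nonempty; thus $\Delta$ is dense and not closed in the product topology, whereas $\Delta=S_{x_1-x_2}$ is closed in the Zariski topology of $K^2$.

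For~(iii) and~(iv): the inclusion $\iota\colon K^n\to K^{n+1}$, $\bar a\mapsto(\bar a,0)$, is injective with image $S_{x_{n+1}}$, which is Zariski-closed; it is continuous because $\iota^{-1}(S_F)=S_{F(x_1,\dots,x_n,0)}$ for a term $F$ in $n+1$ variables; and for $C$ Zariski-closed in $K^n$ one has $\iota(C)=\pi^{-1}(C)\cap\iota(K^n)$ with $\pi^{-1}(C)$ Zariski-closed in $K^{n+1}$ by the continuity of $\pi$ from~(ii), so $\iota$ sends closed sets to relatively closed sets; hence $\iota$ is a closed embedding, and $K^n$ is identified with $K^n\times\{0\}$. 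For~(iv), the componentwise additive translations $\tau_c\colon\bar x\mapsto\bar x+c$ of the group $K^n$ are Zariski-homeomorphisms (with $\tau_c^{-1}=\tau_{-c}$), since $\tau_c^{-1}(S_F)=S_{F(x_1+c_1,\dots,x_n+c_n)}$ and replacing each $x_i$ by the term $x_i+c_i$ yields a term; transitivity of the translation action then gives homogeneity. The only step above that is more than routine bookkeeping is the strictness half of~(ii), which requires producing a concrete witness — the diagonal of an infinite field — that is Zariski-closed in $K^2$ but dense in the product topology; all the other continuity statements are immediate once the class of terms is seen to be closed under substitution.
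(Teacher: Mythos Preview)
Your proof is correct in all four parts; the reductions to ``substitution of terms into terms yields terms'' are exactly the right engine, and your witness for strictness in~(ii)---the diagonal of an infinite field being Zariski-closed but product-dense---is the standard one. The paper itself does not supply a proof of this lemma (it is stated as a routine fact, with only the subsequent Remark indicating which clauses depend on the group structure), so there is no alternative argument to compare against; your write-up simply fills in what the paper leaves to the reader.
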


\begin{proof}
All items are immediate from 
Definition~\ref{def zariski}, and their proofs 
come down to verifications that certain sets 
have the form~$S_F$ for a~definable~$F$ and so 
is closed; nevertheless, we write them out.

(i). 
It suffices to check that, whenever $F:K^n\to K$ 
and $G:K\to K$ are definable by terms, then 
$F^{-1}S_G\subseteq K^n$ is closed in~$K^n$.  
Defining $H:K^n\to K$ by 
$H(x_1,\ldots,x_n):=G(F(x_1,\ldots,x_n))$, 
we get $F^{-1}S_G=S_H$.

(ii). 
Note first the general fact: for any family of 
spaces~$X_i$ and their fixed closed subbases $B_i$, 
all sets $\prod_iA_i$ with at most one index~$j$ 
such that $A_i\in B_i$ if $i=j$, and $A_i=X_i$ 
otherwise, form a~closed subbase of the space 
$\prod_iX_i$ endowed with the usual product topology. 
In our situation, it suffices to check that, 
whenever $F:K^n\to K$ and $G:K\to K$ are definable 
by terms, then the sets $S_F\times K$ and 
$K^n\times S_G$ are closed in the Zariski topology 
of~$K^{n+1}$. Defining $F',G':K^{n+1}\to K$ by 
$F'(x_1,\ldots,x_{n+1}):=F(x_1,\ldots,x_n)$ and 
$G'(x_1,\ldots,x_{n+1}):=G(x_{n+1})$, we get
$S_F\times K=S_{F'}$ and $K^n\times S_G=S_{G'}$.

(iii). 
As may expect, the natural injective map 
$h:K^n\to K^{n+1}$ defined by 
$h(x_1,\ldots,x_n):=(x_1,\ldots,x_n,0)$ is 
a~topological embedding. To show this, it 
suffices to check that, whenever $F:K^n\to K$ 
and $G:K^{n+1}\to K$ are definable by terms, 
then the sets $h[S_F]$ and $h^{-1}[S_G]$ 
(the image and preimage under~$h$) are closed 
in $K^n\!\times\!\{0\}$ (endowed with the induced 
topology as a~subspace of $K^{n+1}$) and $K^n$, 
respectively. Defining $F':K^{n+1}\to K$ by 
$F'(x_1,\ldots,x_{n+1}):=F(x_1,\ldots,x_n)$ 
and $G':K^n\to K$ by 
$G'(x_1,\ldots,x_n):=G(x_1,\ldots,x_n,0)$, 
we get $h[S_F]=S_{F'}\cap(K^n\!\times\!\{0\})$ 
and $h^{-1}[S_G]=S_{G'}$.

(iv). 
If $(a_1,\ldots,a_n),(b_1,\ldots,b_n)\in K^n$, 
letting 
$
h(x_1,\ldots,x_n):=
(x_1-a_1+b_1,\ldots,x_n-a_n+b_n),
$
we get an autohomeomorphism $h:K^n\to K^n$ with 
$h(a_1,\ldots,a_n)=(b_1,\ldots,b_n)$.  
\end{proof}


\begin{rmrk}
1. 
In Lemma~\ref{lm: zariski properties}, only 
clause~(iv) uses the group structure on~$K^n$ 
It is easy to see that (iv)~remains true for any 
structure with the transitive group of invertible 
shifts (cf.~Lemma~6 in~\cite{Kotov}).
Clauses (i)--(iii) are of a~general character. 

2. 
It is worth emphasizing that the Zariski topology 
of~$K$, even if it is non-discrete, generally
does not ensure its topologization (in the sense 
of Definition~\ref{def: topologizability}) since 
the operations of arity~$\ge2$ are not required to 
be continuous in it.
\end{rmrk}

For further information on Zariski topologies 
we refer the reader to the above literature.


\subsection{The main result}

Now we are ready to formulate the main result 
of this article (the proof will be provided 
in the next section)%
\footnote{
This theorem was established around~2010 but not 
published until~\cite{Saveliev (Zariski)} though 
presented at several conferences, e.g.~Colloquium 
Logicum~2012 in Paderborn, and included in author's 
course lectured at the Steklov Mathematical Institute 
in~2014.
}:

\begin{thm}[The Main Theorem]\label{main thm}
Let $K$~be an infinite polyring.
For any term $F\in K[x_1,\ldots,x_n]$
the mapping of $K^n$ into~$K$ defined by~$F$ 
is closed and nowhere dense in~$K^{n+1}$ 
(where $F$~is a~subspace of the space $K^{n+1}$ 
with its Zariski topology). In particular, 
$K^n$~is closed and nowhere dense in~$K^{n+1}$.
\end{thm}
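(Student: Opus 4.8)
The plan is to prove the statement in two steps: first that the graph of $F$ (call it $G_F = \{(a_1,\ldots,a_n,F(a_1,\ldots,a_n)) : (a_i)\in K^n\}$, which is the set the paper calls ``$F$'' as a subspace of $K^{n+1}$) is closed in the Zariski topology on $K^{n+1}$, and second that it has empty interior. The ``closed'' part is essentially formal: the term $y - F(x_1,\ldots,x_n)$ is a term in the $n+1$ variables $x_1,\ldots,x_n,y$ over the polyring $K$ (here distributivity of the operations in $\Omega$ over $+$ guarantees that subtraction of term values is again a term-definable operation, so $y-F(\bar x)$ really is a polyring term), and $G_F$ is exactly its root set $S_{y-F(\bar x)}$, hence an algebraic set, hence Zariski-closed. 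The ``in particular'' assertion is the special case $F \equiv 0$ (the zero term in $n$ variables): then $G_F = K^n\times\{0\}$, which by Lemma~\ref{lm: zariski properties}(iii) is a homeomorphic copy of $K^n$, so the general statement about $G_F$ specializes to it.

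The real content is nowhere density, i.e.\ that $G_F$ contains no nonempty Zariski-open subset of $K^{n+1}$. By Lemma~\ref{lm: zariski properties}(iv) the space $K^{n+1}$ is homogeneous (additive shifts are autohomeomorphisms), and shifting $G_F$ by a suitable vector lets us assume $(0,\ldots,0)\in G_F$ is an interior point if any point is; so it suffices to show $(0,\ldots,0)\in K^{n+1}$ is not in the interior of $G_F$. A basic Zariski-open neighborhood of the origin has the form $U = K^{n+1}\setminus (S_{G_1}\cup\cdots\cup S_{G_r})$ where each $G_j \in K[x_1,\ldots,x_{n+1}]$ is a term with $G_j(0,\ldots,0)\ne 0$. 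So the goal becomes: given finitely many terms $G_1,\ldots,G_r$ in $n+1$ variables, none of which vanishes at the origin, find a point $(a_1,\ldots,a_{n+1})\in K^{n+1}$ with all $G_j(a_1,\ldots,a_{n+1})\ne 0$ but $a_{n+1}\ne F(a_1,\ldots,a_n)$ — such a point lies in $U$ but outside $G_F$, witnessing that $U\not\subseteq G_F$. Equivalently, introducing the ``off-graph'' term $G_0 := y - F(\bar x)$, we must find a common non-root of $G_0,G_1,\ldots,G_r$, where now $G_0(0,\ldots,0)=0$ but that is irrelevant because we are seeking a non-root.

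The heart of the argument — and the step I expect to be the main obstacle — is this: \emph{in an infinite polyring, any finite set of nonzero-at-some-point polyring terms has a common non-root, provided suitable genericity}. This is where Theorem~\ref{multidimensional} (the multidimensional Bergelson--Hindman finite products/sums theorem), applied to the additive semigroup structure of $K$, enters, following Protasov's method. The idea: write out each term $G_j$ in $n+1$ variables. Using additivity/distributivity, $G_j(x_1,\ldots,x_{n+1})$ is (after collecting) a finite $\mathbb Z$-linear-type combination of ``monomials'' built from the operations in $\Omega$ applied to the variables; plugging variables of the form $x_i = $ (finite sum of elements of an infinite sequence) $(x_{i,k})_k$ in $K$ and expanding by distributivity, each $G_j$ becomes a finite sum indexed by choices of which sequence-terms go into which variable-slots. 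Now $2$-color (or finitely color) $K^{n+1}$, or rather the relevant index sets, by ``$G_j$ evaluates to $0$'' vs.\ ``$\ne 0$,'' and apply Theorem~\ref{multidimensional} with $n$ of the factors getting finite sequences (of length large enough to separate the combinatorial structure of the $G_j$'s) and one factor getting an infinite sequence; on the resulting monochromatic block, a counting/cancellation argument forces the ``$\ne 0$'' color, because if $G_j$ vanished on all the relevant finite-sum combinations one could telescope to conclude $G_j$ vanishes at a point near the origin where it was assumed nonzero, a contradiction. Simultaneously one arranges the last coordinate $a_{n+1}$ to differ from $F(a_1,\ldots,a_n)$ — easy since $K$ is infinite, e.g.\ perturb $a_{n+1}$ by a nonzero element not interfering with the finitely many $G_j$, using that each $G_j$ has only finitely many roots along the relevant line. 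Assembling these, the common non-root exists, so $U\not\subseteq G_F$, completing nowhere density. The delicate bookkeeping is matching the combinatorial shape of arbitrary polyring terms to the exact form of the finite products/sums supplied by Theorem~\ref{multidimensional} (in particular which slots need infinite vs.\ merely long-finite sequences, and ensuring the cancellation step genuinely uses an infinite supply in the last coordinate) — this is the step I would budget the most care for, and I expect the author's proof to spend most of its effort there.
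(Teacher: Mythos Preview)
Your plan is the paper's own argument, organized a little differently. The paper first isolates a \emph{Key Lemma}: if a term $G$ has degree $m$ and vanishes on all of $\FS(\vec a_i)_{i\le m}$, then $G(\vec 0)=0$ (proved by induction on the degree via the ``almost endomorphism'' identity $G(\vec x+\vec a)=G(\vec x)+H(\vec x)$ with $\deg H<\deg G$; this is your ``telescope''). It then proves a slightly more general \emph{Key Theorem}: any \emph{finite-valued} subset of $K^{n+1}$ has empty interior. The graph of $F$ is single-valued and closed (as $S_{F(\vec x)-y}$), so the Main Theorem follows.

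One step in your sketch is wrong and should be replaced. You propose to perturb the last coordinate off the graph while keeping each $G_j\ne 0$, ``using that each $G_j$ has only finitely many roots along the relevant line.'' In a general polyring this is false: a term can have infinitely many roots in one variable (think of zero-divisors in rings, or torsion in Abelian groups). The right argument---which you already have all the pieces for---avoids this entirely. The sets $G_F,\,S_{G_1},\ldots,S_{G_r}$ cover $K^{n+1}$ (since you are assuming $U\subseteq G_F$); by Theorem~\ref{multidimensional} one of them contains a set
\[
P=\Bigl(\prod_{i<n}\FS(a_{i,k})_{k\le m}\Bigr)\times\FS(a_{n,k})_{k<\omega}
\]
with $m$ at least the maximal degree among the $G_j$. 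It cannot be $G_F$, because $G_F$ has singleton sections in the last coordinate while $P$ has infinite ones; hence $P\subseteq S_{G_j}$ for some~$j$, and the Key Lemma gives $G_j(\vec 0)=0$, contradicting $\vec 0\in U$. No finiteness of root sets along lines is used---only that the graph is finite-valued, which is exactly why the paper states the Key Theorem at that level of generality.
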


Loosely speaking, this shows that such spaces, 
although can be even not Hausdorff, allow 
a~reasonable notion of topological dimension 
(this remark will be refined in Section~3.3). 
Certainly, this is much stronger fact than the 
non-discreteness of these space, which easily 
follows:

\begin{coro}\label{coro non-discrete}
If $K$~is an infinite polyring and $0<n<\omega$,
then $K^n$~with its Zariski topology has no 
isolated points.
\end{coro}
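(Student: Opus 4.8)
The plan is to derive the corollary from the Main Theorem (Theorem~\ref{main thm}) together with the homogeneity and identification clauses of Lemma~\ref{lm: zariski properties}, so that all the genuine work is already done and only a short topological deduction remains. First I would use clause~(iv) to reduce the problem to a single point: since $K^n$ is homogeneous, it has an isolated point if and only if every point is isolated, so it suffices to show that the distinguished point $0=(0,\ldots,0)$ is not isolated in the Zariski topology on~$K^n$.

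Next I would invoke the ``in particular'' part of the Main Theorem, applied with dimension parameter $n-1$, to conclude that $K^{n-1}$ is closed and nowhere dense in~$K^n$. Via the identification of clause~(iii), this says precisely that the subspace $K^{n-1}\times\{0\}\subseteq K^n$ is closed and nowhere dense, hence has empty interior in~$K^n$. The key observation is then that the point $0=(0,\ldots,0,0)$ belongs to $K^{n-1}\times\{0\}$. If $0$ were isolated, the singleton $\{0\}$ would be a nonempty open subset of $K^n$ contained in $K^{n-1}\times\{0\}$, forcing that set to have nonempty interior---contradicting nowhere density. Therefore $0$ is not isolated, and by the homogeneity reduction $K^n$ has no isolated points.

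For $n=1$ the argument uses the Main Theorem at dimension~$0$, i.e.\ for the constant term~$0$, which yields that $\{0\}=K^0\times\{0\}$ is nowhere dense in $K=K^1$; this is the expected reading of the ``in particular'' clause, so the single argument above covers all $n$ with $0<n<\omega$ uniformly. I expect no serious obstacle here, since the entire difficulty is concentrated in the Main Theorem, whose proof rests on the multidimensional Bergelson--Hindman theorem (Theorem~\ref{multidimensional}). The only points needing care are the homogeneity reduction, which is what lets one treat the single convenient point~$0$ rather than an arbitrary one, and the bookkeeping of the dimension index: the corollary at level~$n$ invokes nowhere density of the $(n-1)$st power inside the $n$th power, so the relevant instance of the Main Theorem is at parameter~$n-1$.
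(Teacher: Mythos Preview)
Your argument is correct and matches the paper's intent: the paper gives no explicit proof but simply remarks that the non-discreteness ``easily follows'' from the Main Theorem, and your dimension-shift deduction (homogeneity to reduce to~$\vec 0$, then nowhere density of $K^{n-1}$ in $K^n$) is precisely such an easy deduction. The only point worth noting is that your handling of $n=1$ relies on reading the Main Theorem as covering the degenerate case of terms in zero variables; this is consistent with the Key Theorem (which applies to $\vec K=K^1$), and one could alternatively bypass the issue by observing directly that a singleton in $K^n$ is closed (by $T_1$) and finite-valued, hence nowhere dense by Corollary~\ref{coro of key thm}.
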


\begin{rmrk}\label{expanding polyring} 
The following observations are immediate from 
Lemma~\ref{lm: expansion}:

1. 
If $\Omega\subseteq\Omega'$ then the Zariski 
topology of $(K,0,-,+,\Omega')$ is finer than 
one of $(K,0,-,+,\Omega)$. 
Since there exists the most expanded polyring with 
a~given $(K,0,-,+)$, Theorem~\ref{main thm} provides 
the best possible result in this direction.

2. 
The procedure which embeds a~given universal algebra 
$(K',\Omega')$ into a~universal algebra $(K,\Omega)$ 
expanded to a~polyring $(K,0,-,+,\Omega)$ of formal 
sums, provides also a~natural extension of the Zariski 
space of any given algebra, which can be discrete, 
to a~non-discrete Zariski space of a~larger algebra. 
\end{rmrk}

Recall that for a~$T_1$-space~$X$, its
{\it pseudocharacter\/} at a~point $x\in X$ is 
the least cardinality of a~family of open sets 
whose intersection is~$\{x\}$, and the 
pseudocharacter of the whole space~$X$ is the 
supremum of these cardinals for all its points.
Clearly, the pseudocharacter of~$X$ does not 
exceed its cardinality~$|X|$, and is~$1$ 
if $X$~is discrete, and infinite otherwise.
Thus Corollary~\ref{coro non-discrete} shows 
that whenever $K$~is an infinite polyring endowed 
with its Zariski topology then its pseudocharacter 
is also infinite (and the same at all points 
due to Lemma~\ref{lm: zariski properties}(iv)).

\begin{thm}\label{T3 polyring}
Let $K$~be an infinite polyring, 
$|K|=\kappa\ge\omega$. 
\\
1. 
If $\kappa=\omega$, then $K$ is $T_3$-topologizable.
\\
2. 
If the pseudocharacter of $K$ with 
its Zariski topology is $\cf\kappa$, then 
$K$~is $T_3$-topologizable by some topology 
of the same pseudocharacter $\cf\kappa$. 
\end{thm}

\begin{proof}
By Theorem~\ref{main thm}, the Zariski topology 
of $K$ is non-discrete, so its pseudocharacter is 
infinite. Hence, if $K$~is countable then, 
by~\cite{Kotov,Dutka Ivanov}, it is topologizable 
by some Hausdorff topology without isolated points. 
In general, by~\cite{Dutka Ivanov}, if the 
pseudocharacter of $K$ (and in fact, of any universal 
algebra) in its Zariski topology is $\cf\kappa$, then 
$K$~is topologizable by some Hausdorff topology 
of the same pseudocharacter $\cf\kappa$. 
It remains to note that, by~\cite{Montgomery Zippin}, 
any $T_2$-topology compatible with a~group is 
actually~$T_3$ (operations in $\Omega$ play 
no role here). This completes the proof. 
\end{proof}

\begin{exmp}\label{exm: Arnautov}
1. 
Let $K$~be a~non-topologizable ring constructed by
Arnautov in~\cite{Arnautov (nontopologizable rings)}. 
In the Zariski space of~$K$, the pseudocharacter is 
infinite (as for any polyring) but not equal to $\cf|K|$. 

2. 
Let $K$~be $(\mathbb F_{2})^{\omega}$, the countable 
direct power of the two-element field~$\mathbb F_2$.
Clearly, $K$~is a~ring, and its Zariski topology 
coincides with the usual topology of the Cantor set. 

To see, note that for any $a=(a_i)_{i<\omega}\in K$, 
some $b=(b_i)_{i<\omega}\in K$ 
is a~solution of the equation $ax=0$ iff 
$b_i=0$ whenever $a_i=1$, 
and a~solution of the equation $ax+a=0$ iff
$b_i=1$ whenever $a_i=1$ 
(so we have $S_{ax+a}=K\setminus S_{ax}$).
It easily follows that any basic clopen set in 
the Cantor space is algebraic. 
\end{exmp}

\begin{rmrk}\label{not necessary} 
As $(\mathbb F_{2})^{\omega}$ has cardinality~$2^\omega$ 
and pseudocharacter $\omega<\cf 2^\omega$, we see that 
the sufficient condition from~\cite{Dutka Ivanov} is not 
necessary even for rings (although it is essential for 
the proof given there, and even for groups, 
cf.~\cite{Dutka Ivanov}, Proposition~3.4). 
\end{rmrk}


\subsection{The proof of the main result} 

Here we shall prove the Main Theorem 
(Theorem~\ref{main thm}). 
Throughout the section $K$ is a~polyring. 

\begin{ntt}\label{notation polyring}
To simplify the reading, we adopt the following conventions:
\begin{itemize}
\setlength\itemsep{-0.39em}
\item[(i)] 
Given $n<\omega$, we write $\vec x$, $\vec 0$, $\vec K$ 
instead of $(x_1,\ldots,x_{n})$, $(0,\ldots,0)$, $K^n$, 
respectively. 
\item[(ii)] 
The pointwise addition in~$\vec K$ is denoted also by~$+$, 
so $\vec x+\vec y$ denotes $(x_1+y_1,\ldots,x_{n}+y_{n})$. 
\item[(iii)] 
$K[x_1,\ldots,x_n]$, shortly $K[\vec x]$, denotes the set 
of all terms over~$K$ in the variables $x_1,\ldots,x_n$.
\end{itemize}
\end{ntt}

Let us now specify certain terms in polyrings 
(keeping the terminology of~\cite{Higgins 1956}):

\begin{dfn}\label{dfn polynomial}
1. 
A~term is a~{\it monomial\/} iff it does not 
contain~$+$\,, and a~{\it polynomial\/} iff it is 
either a~monomial or a~sum of (two or more) monomials. 

2. 
If $F$ and~$G$ are terms and $A$ a~class of polyrings,  
$G$~{\em represents $F$ in~$A$} (and conversely) iff 
$F(\vec a)=G(\vec a)$ for any $K\in A$ and $\vec a\in K$. 
\end{dfn}


\begin{lmm}\label{lmm polynomial}
In polyrings, any term is represented by a~polynomial. 
Moreover, if $F\in\Omega$ is an $(n+1)$-ary operation, 
then 
$$
F\bigl(
\sum_{j<k_0}\!x_{0,j},\ldots,\sum_{j<k_n}\!x_{n,j}
\bigr)
=
\sum_{j_0<k_0}\ldots\sum_{j_n<k_n}\!
F(x_{0,j_0},\ldots,x_{n,j_n}).
$$
\end{lmm}

\begin{proof}
Renaming $\sum_{j<k_i}x_{i,j}$ by~$y_i$, 
we first obtain
$
F(y_{0},\ldots,y_{n})=
\sum_{j_0<k_0}
F(x_{0,j_0},y_{1},\ldots,y_{n})
$
by using distributivity $k_0$~times, 
then similarly
$
F(x_{0,j_0},y_{1},\ldots,y_{n})=
\sum_{j_1<k_1}
F(x_{0,j_0},x_{1,j_1},y_{2},\ldots,y_{n}),
$
etc.~up to 
$
F(x_{0,j_0},x_{1,j_1},\ldots,y_{n})=
\sum_{j_n<k_n}
F(x_{0,j_0},x_{1,j_1},\ldots,x_{n,j_n}).
$
\end{proof}


\begin{dfn}\label{degree}
If $F$ is a~term over a~polyring~$K$ and $X$~a~set 
of variables, the {\em degree of~$F$ w.r.t.~$X$} 
is defined by induction on the construction of~$F$: 
it is equal to
\begin{itemize}
\setlength\itemsep{-0.39em}
\item[(i)] 
$0$ if $F$ is either a~constant or a~variable not in~$X$;
\item[(ii)] 
$1$ if $F$ is a~variable in the set;
\item[(iii)] 
the maximum of the degrees w.r.t.~$X$ of $F_0$ and $F_1$ 
if $F$ is $F_0+F_1$;
\item[(iv)] 
the sum of the degrees w.r.t.~$X$ of $F_0,\ldots,F_n$ 
if $F$ is $G(F_0,\ldots,F_n)$ for some $G\in\Omega$. 
\end{itemize}
The {\em degree of~$F$} is 
its degree w.r.t.~all variables.
\end{dfn}

\hide{ 
\begin{dfn}\label{degree1}
If $F$ is a~term over a~polyring~$K$ and $X$~a~set 
of variables, the {\em degree of~$F$ w.r.t.~$X$}, 
denoted by $\deg_X(F)$, is defined by induction on 
the construction of~$F$, letting it equal to:
\begin{itemize}
\setlength\itemsep{-0.39em}
\item[(i)] 
$0$ if $F$ is either a~constant or a~variable not in~$X$;
\item[(ii)] 
$1$ if $F$ is a~variable in~$X$;
\item[(iii)] 
$\max(\deg_X(F_0),\deg_X(F_1))$
if $F$ is $F_0+F_1$;
\item[(iv)] 
$\deg_X(F_0)+\ldots+\deg_X(F_n)$ 
if $F$ is $G(F_0,\ldots,F_n)$ for some $G\in\Omega$. 
\end{itemize}
The {\em degree of~$F$}, denoted by $\deg(F)$, is 
its degree w.r.t.~all variables.
\end{dfn}

It is easily follows that 
if $F$~is a~closed term then $\deg_X(F)=0$, and 
if $F$~is a~monomial then $\deg_X(F)$ is equal to
the number of occurrences in~$F$ of those variables 
that are in~$X$.  
}

It is easily follows that 
if $F$~is a~closed term then its degree is~$0$, and 
if $F$~is a~monomial then its degree w.r.t.~$X$ is 
equal to the number of occurrences in~$F$ of those 
variables that are in~$X$.

\begin{rmrk}
This definition of degrees is purely syntactic 
(e.g., according to it, the degree of $x^2-x^2$ is~$2$ 
and not~$0$) 
as if $K$~was a~free algebra in the vocabulary 
$\{0,-,+,\Omega\}$. 
We could define, for a~given class~$A$ of 
polyrings, the {\em true degree of $F$ w.r.t.~$X$ in~$A$} 
as the minimum of the degrees w.r.t.~$X$ of terms that 
represent~$F$ in~$A$. For the purpose of the article,
however, we do not need this invariant. 
\end{rmrk}


The following lemma states that in polyrings, 
any mapping defined by a~term is ``almost an 
endomorphism'', namely, an endomorphism up to 
a~mapping defined by a~term of a~lesser degree:

\begin{lmm}\label{lmm: almost endo}
Let $\vec x$ and $\vec y$ have the same length. 
For every $F\in K[\vec x]$ of a~nonzero degree there exists 
$G\in K[\vec x,\vec y]$ of a~lesser degree w.r.t.~$\vec x$ 
such that 
$$
F(\vec x+\vec y)=
F(\vec x)+F(\vec y)+G(\vec x,\vec y).
$$
\end{lmm}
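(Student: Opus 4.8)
The plan is to reduce to the case where $F$ is a monomial, then expand the monomial evaluated at $\vec x+\vec y$ using the distributivity built into the definition of a polyring. First I would invoke Lemma~\ref{lmm polynomial} to write $F$ as a polynomial, i.e.\ a finite sum $F=M_1+\ldots+M_k$ of monomials. Since both sides of the claimed identity are additive in $F$ (the map $F\mapsto F(\vec x+\vec y)-F(\vec x)-F(\vec y)$ is additive, because $(K,0,+)$ is an Abelian group and evaluation of a sum of terms is the sum of the evaluations), it suffices to establish the identity for a single monomial~$M$ of nonzero degree w.r.t.~$\vec x$, producing a corresponding $G_M$ of smaller degree w.r.t.~$\vec x$; then $G=\sum_j G_{M_j}$ works, with the degree bound preserved by clause~(ii) of Definition~\ref{degree}. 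Monomials of degree zero w.r.t.~$\vec x$ contribute a term of the form $c + c - c = c$ with $c$ not involving $\vec x$, which is absorbed into $G$ harmlessly (its degree w.r.t.~$\vec x$ is $0$, hence less than the nonzero degree of $F$).

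Next I would treat a monomial $M$. A monomial is a term containing no occurrence of $+$, so it is built from the variables $x_1,\ldots,x_n$ (and constants from $K$) using only the operations $F'\in\Omega$. Each such $F'$ is, by Definition~\ref{def: zariski}, distributive in each argument w.r.t.~$+$: the additive shift in the $i$-th slot is an endomorphism of $(K,0,+)$. Evaluating $M$ at $\vec x+\vec y$ and repeatedly applying this distributivity at each occurrence of a variable, one expands $M(\vec x+\vec y)$ into a sum over all ways of choosing, at each occurrence of an $x$-variable, either the ``$\vec x$-part'' or the ``$\vec y$-part'' of that coordinate. (This is exactly the multilinear expansion one gets for multiplication in an ordinary ring, e.g.\ $(x+y)(x'+y')=xx'+xy'+yx'+yy'$, but now carried out inside an arbitrary operation via its coordinatewise additivity.) The all-$\vec x$ choice gives $M(\vec x)$ and the all-$\vec y$ choice gives $M(\vec y)$; every remaining term has at least one occurrence switched to the $\vec y$-side, hence strictly fewer than $\deg_{\vec x}(M)$ occurrences of $\vec x$-variables, so it is a monomial of strictly smaller degree w.r.t.~$\vec x$ (in the variables $\vec x,\vec y$). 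Collecting all these mixed terms into $G_M$ gives $M(\vec x+\vec y)=M(\vec x)+M(\vec y)+G_M(\vec x,\vec y)$ with $\deg_{\vec x}(G_M)<\deg_{\vec x}(M)$.

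The step I expect to require the most care is the inductive expansion of a monomial under a general $n$-ary operation: one must formalize ``expand at each occurrence of a variable'' as an induction on the construction of the monomial, using that each operation is additive separately in each of its arguments, and then verify the bookkeeping that every summand other than the two pure ones has strictly smaller $\vec x$-degree. This is where clause~(iii) of Definition~\ref{degree} also needs a brief remark: the degree is the minimum over representing polynomials, so after expanding one should check the resulting $G_M$, when rewritten as a polynomial in $\vec x,\vec y$, still has every monomial of degree w.r.t.~$\vec x$ strictly below $\deg_{\vec x}(F)$ — which holds because passing to a polynomial representation only collects like terms and cannot raise the number of $\vec x$-occurrences in any monomial. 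Everything else — additivity of the construction over sums of monomials, the absorption of constant (degree-zero) monomials, and the assembly of the final $G$ — is routine once the monomial case is in hand.
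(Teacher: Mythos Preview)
Your approach is correct and is essentially what the paper's one-line proof (``Induction on the degree of~$F$'') amounts to when written out: reduce to monomials via Lemma~\ref{lmm polynomial}, then use distributivity to expand $M(\vec x+\vec y)$ multilinearly and collect the mixed terms into~$G$. One harmless slip: for a degree-zero monomial~$c$, the contribution to~$G$ is $c-c-c=-c$, not $c+c-c=c$; either way it has degree~$0$ w.r.t.~$\vec x$, so the conclusion is unaffected.
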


\begin{proof}
This is a~routine check by induction on the construction of 
the term~$F$. We write out the details for the skeptic reader.

Let the term $F(\vec x)$ be of the form 
$F_0(\vec x)+F_1(\vec x)$ (we do not assume that all 
the variables of~$\vec x$ actually occur in both of 
$F_0$ and~$F_1$), and suppose that we have already proved, 
for $i\in\{0,1\}$, the identities
$
F_i(\vec x+\vec y)=
F_i(\vec x)+F_i(\vec y)+G_i(\vec x,\vec y)
$
with some $G_i$ of a~lesser degree w.r.t.~$\vec x$ 
than~$F_i$. We get (by using that the addition of 
polyrings is commutative and associative):
\begin{align*}
F(\vec x+\vec y)
&=
F_0(\vec x+\vec y)+F_1(\vec x+\vec y)
\\
&=
F_0(\vec x)+F_0(\vec y)+G_0(\vec x,\vec y)+
F_1(\vec x)+F_1(\vec y)+G_1(\vec x,\vec y)
\\
&=
F_0(\vec x)+F_1(\vec x)+F_0(\vec y)+F_1(\vec y)+
G_0(\vec x,\vec y)+G_1(\vec x,\vec y)=
F(\vec x)+F(\vec y)+G(\vec x,\vec y),
\end{align*}
where 
$G(\vec x,\vec y):=G_0(\vec x,\vec y)+G_1(\vec x,\vec y)$.
The degree w.r.t.~$\vec x$ of $G$ is equal to the maximum 
of such degrees of $G_0$ and~$G_1$, and so is less than 
the degree w.r.t.~$\vec x$ of $F$, which is equal to the 
maximum of such degrees of $F_0$ and~$F_1$.

Let now the term $F(\vec x)$ be of the form 
$H(F_0(\vec x),\ldots,F_n(\vec x))$ for some $H\in\Omega$
(again we do not assume that all the variables of~$\vec x$ 
occur in each of $F_0,\ldots,F_n$), and suppose that 
we have already proved, for $i\le n$, the identities
$
F_i(\vec x+\vec y)=
F_i(\vec x)+F_i(\vec y)+G_i(\vec x,\vec y)
$
with some $G_i$ of a~lesser degree w.r.t.~$\vec x$ 
than~$F_i$. By using Lemma~\ref{lmm polynomial}, we get:
\begin{align*}
F(\vec x+\vec y)
&=
H(F_0(\vec x+\vec y),\ldots,F_n(\vec x+\vec y))
\\
&=
H(F_0(\vec x)+F_0(\vec y)+G_0(\vec x,\vec y),
\ldots,
F_n(\vec x)+F_n(\vec y)+G_n(\vec x,\vec y))
\\
&=
\sum_{j_0<3}\ldots\sum_{j_n<3}\!
H(F_{0,j_0}(\vec x,\vec y),
\ldots,F_{n,j_n}(\vec x,\vec y)),
\end{align*}
where 
\hide{
$F_{i,j}(\vec x,\vec y))$ denotes 
$F_i(\vec x)$ if $j=0$,
$F_i(\vec y)$ if $j=1$, and 
$G_i(\vec x,\vec y)$ if $j=2$.
}
\begin{align*}
F_{i,j}(\vec x,\vec y)):=
\begin{cases}
F_i(\vec x)
&\text{if }j=0,
\\
F_i(\vec y)
&\text{if }j=1,
\\
G_i(\vec x,\vec y)
&\text{if }j=2.
\end{cases}
\end{align*}

It follows from the induction hypothesis that, 
among all summands of the form 
$$
H(F_{0,j_0}(\vec x,\vec y),
\ldots,F_{n,j_n}(\vec x,\vec y)),
$$
the term of the highest degree w.r.t.~$\vec x$ is 
the one in which $j_0=\ldots=j_n=0$, i.e., the term 
$H(F_{0}(\vec x),\ldots,F_{n}(\vec x))$, 
which is just $F(\vec x)$. So the degree w.r.t.~$\vec x$ 
of $F(\vec x+\vec y)$ is equal to one of $F(\vec x)$
and greater than one of the sum of all other summands.
Likewise, the summand with $j_0=\ldots=j_n=1$, i.e., 
the term $H(F_{0}(\vec y),\ldots,F_{n}(\vec y))$, is 
just $F(\vec y)$ (and its degree w.r.t.~$\vec y$ is 
equal to one of $F(\vec x+\vec y)$ and is greater than 
one of the sum of all other summands). 

Therefore, letting $G(\vec x,\vec y)$ to be the sum above 
without the two summands $F(\vec x)$ and $F(\vec y)$, 
we see that $G$ has the lesser degree w.r.t.~$\vec x$ 
(and w.r.t.~$\vec y$) than 
$F(\vec x+\vec y)=F(\vec x)+F(\vec y)+G(\vec x,\vec y)$,
as required. 
\end{proof}

\begin{rmrk}\label{Abelian}
Lemma~\ref{lmm: almost endo} 
(more precisely, that part of its proof that 
passes from $F_0(\vec x)$ and $F_1(\vec x)$ to 
$F_0(\vec x)+F_1(\vec x)$) is the only place in the 
whole proof of the Main Theorem using the assumption 
that the groups $(K,0,-,+)$ in polyrings are Abelian. 
Recall, however, that the Zariski topology of 
non-Abelian groups can be discrete; therefore, 
the assumption is essential. On the other hand, 
if an $\Omega$-group satisfies this lemma 
(or rather, its Corollary~\ref{coro: almost endo})  
then it satisfies the Main Theorem as well.
\end{rmrk}

\begin{coro}\label{coro: almost endo}
For every $F\in K[\vec x]$ of a~nonzero degree 
and any $\vec a\in\vec K$ 
there exists $H\in K[\vec x]$ of a~lesser degree 
such that 
$$
F(\vec x+\vec a)=F(\vec x)+H(\vec x).
$$
\end{coro}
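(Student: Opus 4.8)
The plan is to obtain this as an immediate specialization of Lemma~\ref{lmm: almost endo}. Since $F\in K[\vec x]$, every variable occurring in~$F$ is among $\vec x$, so the degree of~$F$ with respect to~$\vec x$ coincides with the degree of~$F$; write $d\ge 1$ for this common value. Introduce a fresh tuple $\vec y$ of the same length as~$\vec x$ and apply Lemma~\ref{lmm: almost endo}, obtaining $G\in K[\vec x,\vec y]$ of degree w.r.t.~$\vec x$ strictly less than~$d$ and such that
$$
F(\vec x+\vec y)=F(\vec x)+F(\vec y)+G(\vec x,\vec y).
$$

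The next step is to substitute the constant tuple $\vec a\in\vec K$ for~$\vec y$. Plugging elements of~$K$ into the variables~$\vec y$ turns $F(\vec y)$ into the constant $F(\vec a)\in K$ and turns $G(\vec x,\vec y)$ into a term $G(\vec x,\vec a)\in K[\vec x]$; moreover, a substitution of constants cannot increase the number of occurrences of the remaining variables in any monomial, so the degree of $G(\vec x,\vec a)$ w.r.t.~$\vec x$ is again at most the degree of~$G$ w.r.t.~$\vec x$, hence still less than~$d$. Setting
$$
H(\vec x)=F(\vec a)+G(\vec x,\vec a)
$$
yields a term in $K[\vec x]$ with $F(\vec x+\vec a)=F(\vec x)+H(\vec x)$, which is the desired identity.

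What remains is the degree bookkeeping for~$H$. The summand $F(\vec a)$ is a constant, i.e.\ a monomial of degree~$0$; since $d\ge 1$, adjoining it to a polynomial representative of $G(\vec x,\vec a)$ (which exists by Lemma~\ref{lmm polynomial}) does not raise the degree, so $\deg H\le\max(0,\deg G(\vec x,\vec a))<d=\deg F$, with the borderline case $d=1$ giving simply $\deg H=0$. I do not anticipate any genuine obstacle: the only point demanding care is that ``degree'' is defined through a \emph{minimal} polynomial representative, so the bound $\deg H<\deg F$ must be established by passing to polynomial representatives before counting occurrences, rather than read off the syntax of~$F$ directly.
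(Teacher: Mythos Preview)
Your argument is correct and follows exactly the route the paper intends: specialize Lemma~\ref{lmm: almost endo} by substituting the constant tuple~$\vec a$ for~$\vec y$ and absorb the constant $F(\vec a)$ into~$H$. The degree bookkeeping you spell out (passing to a polynomial representative of~$G$, noting that substituting constants for the $\vec y$-variables leaves the $\vec x$-occurrence count in each monomial unchanged, and then bounding $\deg H$ via the minimal-representative definition) is precisely the care the paper suppresses when it states the corollary without proof.
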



The next lemma connects roots of terms and sets 
of finite sums; this crucial fact will allow us 
to apply Hindman-type results to our purposes:

\begin{lmm}[The Key Lemma]\label{key lmm}
Let $F\in K[\vec x]$ have the degree~$\le n$ and 
$(\vec a_i)_{i\le n}\in\vec K^{\,n+1}$. 
If $F(\vec b)=0$ for all $\vec b\in\FS(\vec a_i)_{i\le n}$, 
then $F(\vec 0)=0$.
\end{lmm}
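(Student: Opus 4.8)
The plan is to prove this by induction on the degree $n$ of $F$, peeling off the vector $\vec a_n$ at each step via Corollary~\ref{coro: almost endo}. For the base case $n=0$, the term $F$ is represented by a polynomial all of whose monomials are variable-free, hence $F$ induces a constant map on $\vec K$; since $\vec a_0\in\FS(\vec a_i)_{i\le 0}=\{\vec a_0\}$, the hypothesis gives $F(\vec a_0)=0$, and therefore $F(\vec 0)=0$.

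For the inductive step I would assume the claim for all terms of degree less than $n$, fix $F$ of degree $n\ge 1$ together with $\vec a_0,\ldots,\vec a_n$ on whose finite sums $F$ vanishes, and apply Corollary~\ref{coro: almost endo} with $\vec a=\vec a_n$ to get $H\in K[\vec x]$ of degree $<n$ with $F(\vec x+\vec a_n)=F(\vec x)+H(\vec x)$. The next step is to observe that $H$ vanishes on $\FS(\vec a_i)_{i\le n-1}$: if $\vec b=\vec a_{i_0}+\cdots+\vec a_{i_k}$ with $i_0<\cdots<i_k\le n-1$, then $\vec b$ and $\vec b+\vec a_n$ both lie in $\FS(\vec a_i)_{i\le n}$ (for the latter, append the index $n$, which preserves the increasing order), so $F(\vec b)=F(\vec b+\vec a_n)=0$ and hence $H(\vec b)=F(\vec b+\vec a_n)-F(\vec b)=0$, subtraction being available since $(K,0,+)$ is a group. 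Writing $d=\deg H<n$ and discarding the superfluous vectors $\vec a_{d+1},\ldots,\vec a_n$, one has a $(d+1)$-tuple $\vec a_0,\ldots,\vec a_d$ on whose finite sums $H$ vanishes (as $\FS(\vec a_i)_{i\le d}\subseteq\FS(\vec a_i)_{i\le n-1}$), so the induction hypothesis applied to $H$ yields $H(\vec 0)=0$. Substituting $\vec x=\vec 0$ into the displayed identity then gives $F(\vec a_n)=F(\vec 0)+H(\vec 0)=F(\vec 0)$, and since $\vec a_n\in\FS(\vec a_i)_{i\le n}$ forces $F(\vec a_n)=0$, we conclude $F(\vec 0)=0$.

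I do not anticipate a real obstacle: all the substance is carried by the ``almost endomorphism'' decomposition of Lemma~\ref{lmm: almost endo} and Corollary~\ref{coro: almost endo} and by the pointwise group structure on $\vec K$, whereas the Hindman--Bergelson combinatorics (Theorem~\ref{multidimensional}) only enters afterwards, when the Key Lemma is fed into the proof of the Main Theorem. The one point needing a little care is the degree bookkeeping --- that $\deg F\ne 0$ in the inductive step (true by Definition~\ref{degree}(iii)), that the $H$ returned by the corollary genuinely has smaller degree, and that after trimming the tuple of vectors its length again matches $\deg H$ so the induction hypothesis applies verbatim; the case where $H$ is identically zero is trivial and can be absorbed into the base case.
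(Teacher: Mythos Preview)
Your proposal is correct and follows precisely the approach the paper indicates: the paper's proof reads in full ``Induction on~$n$ using Corollary~\ref{coro: almost endo},'' and what you have written is a careful unpacking of exactly that induction, including the degree bookkeeping the paper leaves implicit.
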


\begin{proof}
We argue by induction on~$n$ using 
Corollary~\ref{coro: almost endo}.

Let $n=0$. So the degree of $F$ is~$0$, which means 
that $F$ is constant. Therefore, as $F(\vec a_0)=0$ by 
the condition, we conclude $F(\vec 0)=0$. 

Let $n>0$ and assume we have already proved the assertion 
for all $m<n$. Let $\vec a\in\FS(\vec a_i)_{i<n}$, so 
$\{\vec a,\vec a+\vec a_n\}\subseteq\FS(\vec a_i)_{i\le n}$, 
and hence $F(\vec a)=F(\vec a+\vec a_n)=0$ by the condition. 
By Corollary~\ref{coro: almost endo}, we also have 
$F(\vec a+\vec a_n)=F(\vec a)+H(\vec a)$ for some 
$H\in K[\vec x]$ of the degree~$<n$. Together this gives 
us $0=0+H(\vec a)$, i.e., $H(\vec a)=0$. Thus we have 
$H(\vec a)=0$ for all $\vec a\in\FS(\vec a_i)_{i<n}$, 
where the degree of $H$ is~$<n$, whence $H(\vec 0)=0$ 
follows by the induction hypothesis. Finally, 
again by Corollary~\ref{coro: almost endo}, we have 
$F(\vec a)=F(\vec 0+\vec a)=F(\vec 0)+H(\vec 0)$, which 
gives us $0=F(\vec 0)+0$, i.e., $F(\vec 0)=0$, as required. 
\end{proof}

Thus $\FS(\vec a_i)_{i\le n}\subseteq S_F$ implies 
$\vec 0\in S_F$ whenever the degree of $F$ is~$\le n$.
In terms of the Zariski topology, we obtain:

\begin{coro}\label{coro: fs in zariski}
For any $(\vec a_i)_{i<\omega}$ in $\vec K$,
the closure of the set $\FS(\vec a_i)_{i<\omega}$ 
in the Zariski topology of~$\vec K$ 
has the element~$\vec 0$.
\end{coro}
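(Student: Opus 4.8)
The plan is to unfold the definition of the Zariski closure and to reduce the claim to the Key Lemma. By Definition~\ref{def zariski} the Zariski-closed subsets of~$\vec K$ are exactly the intersections of algebraic sets, where an algebraic set has the form $\bigcup_{j<m}S_{F_j}$ with $F_0,\dots,F_{m-1}\in K[\vec x]$; hence $\vec 0$ lies in the closure of $S:=\FS(\vec a_i)_{i<\omega}$ if and only if every algebraic set containing~$S$ also contains~$\vec 0$. So I would fix terms $F_0,\dots,F_{m-1}$ with $S\subseteq\bigcup_{j<m}S_{F_j}$ and aim to show $F_j(\vec 0)=0$ for some $j<m$. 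If $\vec 0\in S$ there is nothing to prove; we may discard any $F_j$ with $S_{F_j}=\emptyset$; and if some remaining $F_j$ has degree~$0$ then $S_{F_j}=\vec K\ni\vec 0$ and we are done. So assume $\vec 0\notin S$ and every $F_j$ has positive degree.

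Next I would bring in an idempotent ultrafilter that ``sees'' the set~$S$. Since $S=\FS(\vec a_i)_{i<\omega}$ contains the set of finite sums generated by its own members~$\vec a_i$, the converse to Lemma~\ref{fs in idempotents} recorded in Remark~\ref{rmk: idempotent} provides an idempotent $u$ in the semigroup $(\scc{\vec K},+)$ with $S\in u$; moreover $u$ is free, since the only principal idempotent of a group is the principal ultrafilter at~$\vec 0$, and that one is excluded by $\vec 0\notin S$. As $u$ is an ultrafilter and $S=\bigcup_{j<m}(S\cap S_{F_j})\in u$, some $S_{F_j}$ belongs to~$u$; put $d:=\deg F_j\ge 1$. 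Applying Lemma~\ref{fs in idempotents} (in additive notation) to the free idempotent~$u$ and its member $S_{F_j}$ yields an infinite sequence $(\vec c_k)_{k<\omega}$ in $S_{F_j}$ with $\FS(\vec c_k)_{k<\omega}\subseteq S_{F_j}$.

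Finally I would invoke the Key Lemma (Lemma~\ref{key lmm}): truncating gives $\FS(\vec c_k)_{k\le d}\subseteq\FS(\vec c_k)_{k<\omega}\subseteq S_{F_j}$, so $F_j$ vanishes on every element of $\FS(\vec c_k)_{k\le d}$, and since $F_j$ has degree~$d$ the Key Lemma yields $F_j(\vec 0)=0$, i.e.~$\vec 0\in S_{F_j}$, as required. In this argument all the genuine content is front-loaded into the Key Lemma and into the quoted fact that a set containing an $\FS$-set belongs to an idempotent ultrafilter; this Hindman-type ingredient (Remark~\ref{rmk: idempotent}, which itself rests on Ellis' theorem and the compactness of $\scc{\vec K}$) is the one step I would call the real obstacle. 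Everything else is routine: checking $(K^n,+)$ is an infinite semigroup for $n\ge 1$ (the case $n=0$, where $S=\{\vec 0\}$, being trivial), observing that only the finite truncation to $d+1$ terms is needed, and separating off the trivial case $\vec 0\in S$ so that the idempotent obtained can be taken free.
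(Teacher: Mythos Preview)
Your proof is correct. In fact it is more careful than the paper's own treatment, which simply writes ``In terms of the Zariski topology, we obtain'' after the Key Lemma and states the corollary without further argument. The point you rightly pause on---that a basic Zariski-closed set is a finite \emph{union} $\bigcup_{j<m}S_{F_j}$, whereas the Key Lemma only controls a single $S_F$---is genuinely something to check, and you handle it cleanly by passing to an idempotent ultrafilter (via Remark~\ref{rmk: idempotent}) to pick out one $S_{F_j}$ containing an $\FS$-set, then truncating and applying Lemma~\ref{key lmm}.

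This is exactly the mechanism the paper has in mind: Remark~\ref{rmk: idempotent over polyring}, placed immediately after the corollary, reminds the reader that sets containing an $\FS$-set are precisely the members of idempotent ultrafilters in $(\scc{\vec K},+)$, which is the fact you invoke. So your route is not different from the paper's---it is the paper's intended argument, spelled out. An equivalent phrasing avoiding ultrafilters would apply Hindman's theorem directly to the finite covering $S\subseteq\bigcup_j S_{F_j}$ to extract a sub-$\FS$-set inside one $S_{F_j}$; this is the same content. Your bookkeeping (disposing of degree-$0$ terms, the case $\vec 0\in S$, and noting that freeness of $u$ follows from $\vec 0\notin S$ since the only principal idempotent in a group sits at the identity) is all in order.
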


\begin{rmrk}\label{rmk: idempotent over polyring}
Recall that sets $S\supseteq\FS(\vec a_i)_{i<\omega}$ 
are exactly elements of idempotent ultrafilters in 
the semigroup $(\scc K,+)$ (see \cite{Hindman Strauss}, 
Theorem~5.12; also note the comment below the theorem);
cf.~Remark~\ref{rmk: idempotent}. 
\end{rmrk}


\begin{dfn}\label{dfn: finite-valued} 
A~set $A\subseteq\prod_{i<n}X_i$ is  
{\it finite-valued\/} iff there is $j<n$ such that 
all sections in~$X_j$ are finite, i.e.~for all 
$(a_i)_{i<j}\in\prod_{i<j}X_i$ and 
$(a_i)_{j<i<n}\in\prod_{j<i<n}X_i$ 
the sets
$$
\{a_j\in X_j:(a_i)_{i<n}\in A\}
$$
are finite. 
(Such an~$A$ can be regarded as a~partial finite-valued 
map of $\prod_{i\in n\setminus\{j\}}X_i$ into~$X_j$\,, 
which explains the name.)
\end{dfn}

The following theorem immediately leads to the main result 
but is also interesting in its own right:

\begin{thm}[The Key Theorem]\label{key thm}
For every infinite polyring~$K$, if $A\subseteq\vec K$ 
is finite-valued then $A$~has the empty interior 
in the Zariski topology of~$\vec K$.
\end{thm}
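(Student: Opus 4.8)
The plan is to prove the contrapositive: assuming that $A\subseteq\vec K$ has nonempty interior in the Zariski topology of $\vec K=K^n$, I will show that $A$ is not finite-valued, i.e.\ that for every coordinate $j\in\{1,\dots,n\}$ the set $A$ has an infinite $X_j$-section in the sense of Definition~\ref{dfn: finite-valued}. First I would use homogeneity of the Zariski space (Lemma~\ref{lm: zariski properties}(iv)) to replace $A$ by a suitable additive translate so that $\vec 0\in\operatorname{int}(A)$; this is legitimate because a translate of $A$ is finite-valued exactly when $A$ is. The whole argument then reduces to a one-dimensional statement.

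The crux will be the assertion that \emph{every nonempty Zariski-open subset of $K$ is infinite}. Granting this, I would fix $j\in\{1,\dots,n\}$ and consider the injection $\iota_j\colon K\to\vec K$ sending $t$ to the tuple having $t$ in coordinate $j$ and $0$ in every other coordinate. This map is continuous for the Zariski topologies, since $\iota_j^{-1}(S_F)=S_G$ with $G(x):=F(0,\dots,0,x,0,\dots,0)\in K[x]$, and taking preimages commutes with the unions and intersections used to build Zariski-closed sets. Hence $\iota_j^{-1}(\operatorname{int}(A))$ is a Zariski-open subset of $K$ containing $0$, so by the one-dimensional assertion it is infinite; being contained in the $X_j$-section of $A$ over the all-zero choice of the remaining coordinates, that section is infinite too. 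Since $j$ was arbitrary, $A$ is not finite-valued.

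To prove the one-dimensional assertion, I would take a nonempty Zariski-open $V\subseteq K$, translate so that $0\in V$, and suppose for contradiction that $V$ is finite. Since finite unions of root sets form a closed base (Definition~\ref{def zariski}), there are univariate terms $G_1,\dots,G_k\in K[x]$ with $0\in\bigcap_{\ell\le k}(K\setminus S_{G_\ell})\subseteq V$; thus $G_\ell(0)\neq 0$ for every $\ell$, while $\bigcap_{\ell\le k}(K\setminus S_{G_\ell})$ is finite (it lies in $V$), so $\bigcup_{\ell\le k}S_{G_\ell}$ is cofinite in $K$. Now $(K,+)$ is an infinite weakly left cancellative semigroup, so by Corollary~\ref{free idempotent} it admits a \emph{free} idempotent ultrafilter $u\in\scc K$. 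Being free, $u$ omits the finite set $K\setminus\bigcup_{\ell}S_{G_\ell}$, hence contains $\bigcup_{\ell}S_{G_\ell}$, so $S_{G_{\ell_0}}\in u$ for some $\ell_0\le k$. By Lemma~\ref{fs in idempotents} there is an infinite sequence $(a_i)_{i<\omega}$ with $\FS(a_i)_{i<\omega}\subseteq S_{G_{\ell_0}}$; restricting to its first $\deg G_{\ell_0}+1$ terms and applying the Key Lemma (Lemma~\ref{key lmm}) gives $G_{\ell_0}(0)=0$, contradicting $G_{\ell_0}(0)\neq 0$. Hence $V$ is infinite.

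I expect the only genuinely delicate point to be the insistence on a \emph{free} (non-principal) idempotent rather than an arbitrary one: this is exactly what discards the finite ``leftover'' part $K\setminus\bigcup_{\ell}S_{G_\ell}$ of the partition --- a part that, as degenerate torsion examples show, could otherwise absorb an entire set $\FS(a_i)_{i<\omega}$ coming from a constant sequence --- and forces some single $S_{G_{\ell_0}}$ to contain such a set, so that the Key Lemma applies. Everything else (the Zariski closed base, continuity of the coordinate injections~$\iota_j$, homogeneity, and the bookkeeping with sections) is routine.
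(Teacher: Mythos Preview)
Your argument is correct, and in fact cleaner than the paper's own: you reduce the Key Theorem to the one-dimensional case by exploiting the continuity of the coordinate injections~$\iota_j$, and then settle that case using only the existence of a free idempotent in $(\scc K,+)$---i.e.\ the ordinary Hindman theorem---together with the Key Lemma in dimension one.

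The paper proceeds quite differently. It works directly in the full product space: assuming $A\subseteq K^{n+1}$ is finite-valued in the last coordinate, contains~$\vec 0$, and is open, it writes $\vec K\setminus A$ as an intersection of basic closed sets $S_{F_0}\cup\ldots\cup S_{F_j}$, so that $A,S_{F_0},\ldots,S_{F_j}$ cover~$K^{n+1}$. It then invokes the \emph{multidimensional} Bergelson--Hindman theorem (Theorem~\ref{multidimensional}) to place a set of the form $\bigl(\prod_{i<n}\FS(a_{i,k})_{k\le m}\bigr)\times\FS(a_{n,k})_{k<\omega}$ inside one member of the cover; the infinite last factor rules out~$A$, and the Key Lemma forces $\vec 0\in S_F$ for the surviving~$F$, contradicting $\vec 0\in A$.

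The comparison is instructive. Your route shows that the multidimensional theorem is not actually needed for the Key Theorem (hence not for the Main Theorem either): since ``finite-valued'' is a condition on sections along a single coordinate axis, and since those axes embed Zariski-continuously, the one-dimensional Hindman phenomenon already suffices. The paper's route attacks the $(n{+}1)$-dimensional cover head-on, which is conceptually natural but purchases that directness with a substantially stronger Ramsey-theoretic input.
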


\begin{proof}
Notice first that if a~set~$A$ is finite-valued 
then so is any $B\subseteq A$. Therefore, it suffices 
to show that $A$~is not open.

Furthermore, notice that we can w.l.g.~suppose that 
$\vec 0\in A$ (otherwise additively shift the set 
by using that the space is homogeneous) and, 
assuming $\vec K$~denotes~$K^{n+1}$, that 
$A$~is finite-valued 
in the $n$th coordinate (otherwise rename the coordinates).

Toward a~contradiction, assume that $A$~is open. 
Then $\vec K\setminus A$ is closed so is the intersection 
of sets of form $S_{F_0}\cup\ldots\cup S_{F_j}$ 
for some terms $F_0,\ldots,F_j$. 
Pick any of these sets and show that it has~$\vec 0$. 
It will follow that $\vec 0\in\vec K\setminus A$, 
thus reaching a~contradiction.

Let $m$~be maximum of the degrees of the terms 
$F_0,\ldots,F_j$. The sets $A,S_{F_0},\ldots,S_{F_j}$
cover the whole space~$\vec K=K^{n+1}$:
$$
A\cup S_{F_0}\cup\ldots\cup S_{F_j}
=K^{n+1}.
$$
Therefore, by Theorem~\ref{multidimensional} 
(the multidimensional generalization of the Finite 
Sums Theorem), some of these sets includes the set 
$$
P=
\bigl(\prod_{i<n}\FS(a_{i,k})_{k\le m}\bigr)
\times
\FS(a_{n,k})_{k<\omega}
$$ 
for some sequences $(a_{i,k})_{k\le m}$ and 
$(a_{n,k})_{k<\omega}$ consisting of distinct 
elements of~$K$.

For~$A$, however, this is impossible: $P\nsubseteq A$ 
since $A$~is finite-valued in the $n$th coordinate. 
Hence, $P\subseteq S_F$ for some $F\in\{F_0,\ldots,F_j\}$. 
But then $\vec 0\in S_F$ follows from the Key Lemma 
(Lemma~\ref{key lmm}).

This completes the proof of Theorem~\ref{key thm}.
\end{proof}


\begin{coro}\label{coro of key thm}
For every infinite polyring~$K$, if $A\subseteq\vec K$ 
is finite-valued and closed in the Zariski topology 
of~$\vec K$, then $A$~is nowhere dense.
\end{coro}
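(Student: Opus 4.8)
The plan is to read this off the Key Theorem (Theorem~\ref{key thm}) together with the definition of nowhere density. Recall that a subset of a topological space is \emph{nowhere dense} precisely when the interior of its closure is empty. So, given a set $A\subseteq\vec K$ that is finite-valued and closed in the Zariski topology of $\vec K$, I would first invoke closedness to replace $\overline{A}$ by $A$ itself; the assertion to be proved then becomes $\mathrm{int}(A)=\emptyset$. But $A$ is finite-valued by hypothesis, so Theorem~\ref{key thm} applies verbatim and states exactly that $A$ has empty interior. Hence $A$ is nowhere dense, and nothing further is needed.

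There is no real obstacle here: all the substantive work — the appeal to the multidimensional Finite Sums Theorem (Theorem~\ref{multidimensional}) to plant a large product set $P$ inside one of the closed pieces, and the Key Lemma (Lemma~\ref{key lmm}) to force $\vec 0$ into that piece — was already carried out inside the proof of the Key Theorem. The only point worth recording explicitly is the elementary remark that the closedness hypothesis is exactly what upgrades ``empty interior'' to ``nowhere dense'': the Key Theorem by itself yields empty interior for an \emph{arbitrary} finite-valued set, but without closedness one cannot control the interior of the closure. Finally, to connect this with the Main Theorem one observes that the graph in $K^{n+1}$ of a term $F\in K[x_1,\dots,x_n]$ is finite-valued — indeed single-valued — in the last coordinate and is closed (it is a root set, namely the solution set of the equation $F(\vec x)=x_{n+1}$, hence an algebraic set); applying the present corollary to that graph in place of $A$ delivers Theorem~\ref{main thm} at once, and the case $n=0$ (or rather the observation $\vec 0\in A$) recovers the in-particular clause that $K^n$ is closed and nowhere dense in $K^{n+1}$.
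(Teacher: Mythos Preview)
Your proof is correct and is exactly the paper's argument: for a closed set, ``empty interior'' and ``nowhere dense'' coincide, so the Key Theorem gives the conclusion immediately. The additional paragraph about recovering the Main Theorem is accurate in spirit (the graph of a term is single-valued and equals $S_G$ for $G(\vec x,x_{n+1})=F(\vec x)-x_{n+1}$), though the parenthetical about ``the case $n=0$'' is not the right justification for the in-particular clause---that clause follows by taking $F$ to be the zero term, so that its graph is $K^n\times\{0\}\cong K^n$.
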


\begin{proof}
For closed sets, to have empty interior is the same 
that to be nowhere dense. 
\end{proof}

\begin{rmrk}\label{rmk: non-closed finite-valued}
The assumption that $A$~is closed cannot be 
omitted even for single-valued maps; e.g., the set 
$A=\{(a,a^2):a\in\mathbb Z\}$, where $\mathbb Z$~is 
the additive group of integers, is everywhere dense 
in~$\mathbb Z^2$. 
\end{rmrk}

Now Theorem~\ref{main thm} follows since 
$F\subseteq K^{n+1}$ is single-valued and closed 
in~$K^{n+1}$ as $F=S_G$ for the map~$G$ defined 
by letting
$$
G(x_0,\ldots,x_{n-1},x_n)=F(x_0,\ldots,x_{n-1})-x_n.
$$
The proof of the Main Theorem is complete.


\section{Problems}

In this section, we provide several problems 
and tasks related to subjects discussed above.%
\footnote{Let us point out also that Section~8
of~\cite{Dikranjan Toller} and Section~13 
in~\cite{Zelenyuk 2011} both provide lists of 
open problems concerning topologies on groups 
and related questions.}
Unless otherwise stated, all algebras below 
are considered with their Zariski topologies 
(e.g.~when we ask whether a~group is connected, 
we actually ask whether so is its Zariski topology).

\subsection{Discreteness}

\begin{prbl}\label{prb: characterize non-discrete}
Characterize groups whose Zariski topology is non-discrete. 
Is the class of such groups first-order axiomatizable?
at least, second-order axiomatizable?

The same questions for other algebraic structures.

%
\end{prbl}

\begin{prbl}\label{prb: characterize Hausdorff}
Is the class of $T_2$-topologizable groups 
first-order (or at least, second-order) 
axiomatizable?

The same questions for other algebraic structures.

%
\end{prbl}

\begin{prbl}\label{prb: adding automorphism to non-discrete}
If the Zariski topology of a~group is non-discrete, 
does it remain non-discrete under adding an automorphism
(endomorphism) as a~new operation? all under adding all 
automorphisms (endomorphisms)?

The same questions for other algebraic structures.

%
\end{prbl}

\begin{prbl}\label{prb: non-discrete square}
If the Zariski topology of a~group~$K$ is non-discrete, 
is so the Zariski topology of~$K^2$? of~$K^n$? 
Furthermore, is then $K$ nowhere dense in~$K^2$? 
$K^n$~nowhere dense in~$K^{n+1}$ for all $n\in\mathbb N$? 

The same questions for skew (i.e.~non-commutative) fields, 
rings, polyrings, other algebraic structures.

\end{prbl}

Notice that Problems 
\ref{prb: characterize non-discrete}--%
\ref{prb: non-discrete square} are open only 
for non-Abelian groups; for Abelian case all four 
answers are affirmative by results of Section~2.

Let us say a~polyring $(K,0,-,+,\Omega)$ is 
{\it commutative\/} iff for all $F\in\Omega$ 
and permutations~$\sigma$ of~$n$, where $n$~is 
the arity of~$F$, we have 
$F(x_1,\ldots,x_n)=F(x_{\sigma1},\ldots,x_{\sigma n})$
for all $x_1,\ldots,x_n$ in~$K$.

\begin{prbl}\label{prb: comm polyring}
Are all commutative polyrings $T_2$-topologizable?
\end{prbl}

By~\cite{Arnautov (commutative rings)}, 
the answer are affirmative for commutative rings 
(see Theorem~\ref{topologizability of rings}).

\begin{prbl}\label{prb: discrete free}
Is there a~variety (or at least, a~quasivariety) 
of universal algebras, e.g.~groupoids, having 
an infinite free algebra whose Zariski topology 
is discrete? 
\end{prbl}


\subsection{Connectedness}

\begin{prbl}\label{prb: characterize connected}
Characterize groups whose Zariski topology is connected. 
Is the class of such groups first-order axiomatizable?
second-order axiomatizable?

The same questions for other algebraic structures
(Abelian groups, rings, skew fields, etc.).

\end{prbl}

\begin{prbl}\label{prb: adding automorphism to connected}
If the Zariski topology of a~group is connected, 
does it remain connected under adding an automorphism
(endomorphism) as a~new operation? all under adding all 
automorphisms (endomorphisms)?

The same questions for other algebraic structures.

%
\end{prbl}


\hide{

\begin{prbl}\label{prb: torsion-free connected} 
Are all Abelian torsion-free groups connected?

\end{prbl}

Recall that \cite{Klyachko et al}~provides 
examples of (non-Abelian) torsion-free groups
with discrete Zariski topologies.

[Dikran Dikranjan's comments: 

Do you mean “connected” in their Zariski topology? 
If “yes”, then also the answer is “yes”. 
Actually, it is enough the ask non-torsion. 
The precise result is even more striking: 
an Abelian group~$G$ is disconnected
in its Zariski topology if and only if 
there exists a~positive integer~$n$ such that
$nG$ is finite but non-trivial. 
It means, of course, that $G$ is a~bounded torsion group
(= of finite exponent), but not only. 
Such a~group is a~direct sum of cyclic groups, so 
completely determined by its Ulm--Kaplanski invariants 
$f_{p,k}$ that tell you how many times a~given cyclic 
$p$-group $\mathbb Z(p^k)$ appears in that direct sum. 
Call a~non-zero U.-K.-invariant {\it leading\/} 
if its $k$ is a~maximum (for a~fixed~$p$). 
This finitely many leading U.-K.’s tell you 
everything about the connectedness of $G$ 
in its Zariski topology~-- it is disconnected 
precisely [if] all of them are finite. 

I think this answers your question. This answer is 
somewhat hidden (but exists :-) in our 2010 paper [35]. 
More recently we write the paper that explains this 
in a~more explicit way by proving 
this curious theorem (especially from the point of 
view of your remarkable paper)~-— an Abelian group 
admits a~connected Hausdorff group topology iff 
its Zariski topology is connected! 
(This resolves an open problem of Markov on 
the connected topologization of groups.) 
This aspect (and more) you can find in another recent 
survey of mine (a~continuation of [36]) that 
my collaborator will send you later today.]

\begin{prbl}\label{prb: Abelian disconnected}
If an [infinite] Abelian group is disconnected, 
can it be decomposed in a~direct product with 
a~finite (of size~$>1$) factor? What about rings? 

\end{prbl}

[Dikran Dikranjan's comments: 

As far as Abelian groups are concerned, the answer 
is yes, if I correctly understand it and if $G$ is 
INFINITE. Namely, let us start with the finite case. 
If G is finite, but non-trivial, then it is 
disconnected, yet not always decomposable 
(e.g., when it is a~non-trivial cyclic p-group). 
If an infinite $G$ has disconnected, then, as 
explained above, it will be an infinite direct sum 
of finite (cyclic) groups. 
Is this the answer you expect?
I am afraid I do not understand your question.] 

}


Recall that \cite{Klyachko et al}~provides examples 
of non-Abelian torsion-free groups with the discrete 
Zariski topologies. The case of Abelian groups is 
opposite; it follows from the results of Dikranjan 
and Shakhmatov \cite{Dikranjan Shakhmatov} that 
all Abelian torsion-free and even non-torsion 
groups are connected in their Zariski topology. 
Moreover, they proved that the Zariski topology of 
an Abelian group~$G$ is disconnected iff there exists 
a~positive $n<\omega$ such that $nG$ is finite but 
non-trivial; this follows from their characterization 
of the connectedness of~$G$ in terms of Ulm--Kaplanski 
invariants. 
Another remarkable result was established in their 
subsequent paper \cite{Dikranjan Shakhmatov 2016}: 
an Abelian group admits a~connected Hausdorff group 
topology iff its Zariski topology is connected. 
This resolved a~long-standing open problem of Markov 
on the connected topologization of groups.


\begin{prbl}\label{prb: free group connected}
Is any non-Abelian free group connected? 
The same question for algebraically closed groups 
or, wider, infinite simple groups.

\end{prbl}

\begin{prbl}\label{prb: connected rings}
Is a~ring connected if its additive group is 
free? algebraically closed?

\end{prbl}

\begin{prbl}\label{prb: zero divisor disconnected}
Is any ring with non-trivial zero divisors 
disconnected?

\end{prbl}

\begin{prbl}\label{prb: module disconnected}
Is the free $R$-module connected whenever 
so is the ring~$R$?

\end{prbl}

\begin{prbl}\label{prb: skew field connected}
Is any skew field connected? If yes, what about 
quasifields (near-fields, rings without non-trivial 
zero divisors, etc.)?

\end{prbl}

The connectedness may fail for some rings. E.g.~the 
Zariski topology of $(\mathbb Z^\omega,+,\,\cdot\,)$ 
coincides with the usual topology on the Cantor set, 
and thus is disconnected. Its subring 
$(\mathbb Z^{<\omega},+,\,\cdot\,)$ consisting of all 
eventually zero sequences is homeomorphic to the space 
of rationals.

\begin{prbl}\label{prb: non-roots} 
Let $K$~be an infinite skew field and $F\in K[x]$ 
a~polynomial such that not all elements of~$K$ are 
roots of~$F$. Is there a~sequence $(a_i)_{i\in\mathbb N}$ 
of distinct elements of~$K$ such that all the elements of 
the set $\FS(a_i)_{i\in\mathbb N}$ are not roots of~$F$?

What about near-fields
(rings without non-trivial zero divisors, etc.)?

\end{prbl}

Let us point out (without a~proof) that an affirmative answer 
to Problem~\ref{prb: non-roots} implies an affirmative 
answer to Problem~\ref{prb: skew field connected}.


\subsection{Dimension}

Let $\ind(X)\ge-1$ for all topological spaces~$X$, 
and let: $\ind(X)=-1$ iff $X$~is empty, and whenever
spaces~$Y$ with $\ind(Y)\le n$ have already been 
defined then $\ind(X)\le n+1$ iff there exists 
an open base~$\varGamma$ of~$X$ such that
$\ind(\partial O)\le n$ for all $O\in\varGamma$. 
(Here $\partial S$~denotes the boundary of the set~$S$.)


Let us mention (without a~proof) that 
Theorem~\ref{main thm} gives the inequality 
$\ind(K^n)\ge(\ind K)+n-1$ for all polyrings~$K$ 
and $n\in\mathbb N$.

\begin{prbl}\label{prb: ind for field}
Is it true that for any infinite field~$K$ and all
$n\in\mathbb N$ we have $\ind(K^n)=n$? 
(It is not difficult to verify that this is the case 
for $n\le2$, and that $\ind(K^n)\ge n$ for $n>1$.) 

\end{prbl}

Let us point out (without a~proof) that an affirmative 
answer to Problem~\ref{prb: roots fields loc homeom} 
below implies an affirmative answer to 
Problem~\ref{prb: ind for field}.


\begin{prbl}\label{prb: ind quaternion}
Calculate $\ind(\mathbb H^n)$ for all $n\in\mathbb N$.
Is it true $\ind(\mathbb H)=2$? 
(Here $\mathbb H$~is the skew field of quaternions.) 
An analogous question about other Cayley--Dickson 
algebras.

\end{prbl}

\begin{prbl}\label{prb: ind polyring}
For which algebras (in particular, polyrings) 
the equality $\ind(K^n)=(\ind K)\cdot n$ does hold 
for all $n\in\mathbb N$? 
\end{prbl}

\begin{prbl}\label{prb: ind} 
What can be values of the function~$\ind$ for 
various algebras (skew fields, Abelian groups, groups, 
rings, polyrings, etc.)?
How does $\ind K^n$ depend on~$\ind K$?

\end{prbl}

\begin{prbl}\label{prb: dimension in hausdorff} 
Assume $K$~is a~topologizable polyring. Is then
$K$~topologizable by a~(Hausdorff) topology such 
that $K^n$~is closed nowhere dense subspace 
of~$K^{n+1}$ in the usual product topology
on~$K^{n+1}$?
\end{prbl}

\subsection{Other properties}

\begin{prbl}\label{prb: characterize properties}
Characterize groups (Abelian groups, rings, 
skew fields, etc.)~whose Zariski topology is:
\begin{itemize}
\setlength\itemsep{-0.39em}
\item[(i)] 
compact (locally compact, Lindel{\"o}f, 
paracompact, metrizable, etc.);
\item[(ii)] 
Hausdorff (Tychonoff, normal, etc.);
\item[(iii)]
zero-dimensional (totally disconnected, 
extremally disconnected);
\item[(iv)] 
a~base of a~filter (a~filter, an ultrafilter, 
a~filter of a~special kind, etc.) 
plus the empty set.
\end{itemize}
Which of these classes of algebras 
are first-order axiomatizable?
second-order axiomatizable?

\end{prbl}

Some similar questions were discussed in the 
literature. For any universal algebra~$A$ and 
$n\in\mathbb N$, the Zariski spaces of~$A^n$ 
are Noetherian iff each system of equations in 
$n$~variables has the same set of solutions 
that some its finite subsystem, 
see~\cite{Daniyarova et al 2008} 
(in fact, this reformulation is easy since 
Noetherian spaces can be charaterized as 
the spaces in which all subspaces are compact). 
$\Omega$-groups in which all Zariski closed sets are 
algebraic (thus any intersection of algebraic sets 
is algebraic) are characterized in~\cite{Lipyanski}.

\begin{prbl}\label{prb: preserving topol properties}
Establish properties of Zariski topologies that are 
preserved under: 
\begin{itemize}
\setlength\itemsep{-0.39em}
\item[(i)] 
homomorphic images (pre-images);
\item[(ii)] 
subalgebras (normal subgroups, ideals);
\item[(iii)]
direct products (reduced products, ultraproducts);
\item[(iv)] 
existential closure.
\end{itemize}
More generally, what is an interplay between 
the properties of a~given algebra and algebras 
obtained from it by these (or other) 
model-theoretic operations?

\end{prbl}

An interplay between some separability axioms 
and some model-theoretic operations on topological 
algebras is discussed in~\cite{Kearnes Sequeira}.

\begin{prbl}\label{prb: finite powers} 
Which properties of their Zariski topologies are 
shared by $A^n$ and~$B^n$ if $A$ and~$B$ are groups 
(divisible groups, rings, skew fields, etc.)~having 
the same equational theory? elementary equivalent?

\end{prbl}

\begin{prbl}\label{prb: zariski product} 
Characterize polyrings (and other algebras)~$K$ 
such that the Zariski space~$K^n$ coincides with the 
product space of $n$~copies of the Zariski space~$K$. 
\end{prbl}

%

\begin{prbl}\label{prb: finitely many roots}
Characterize skew fields (rings, Abelian groups, 
groups) in which any polynomial in one variable has 
only finitely many roots. (This is in fact 
Problem~\ref{prb: characterize properties}(iv) 
for the Fr{\'e}chet filter.)
Do such skew fields (algebraically closed skew 
fields) coincide with fields? 

\end{prbl}

\begin{prbl}\label{prb: metrizable polyring} 
Are all countable polyrings topologizable by 
a~metrizable topology? 
\end{prbl}

Recall that by Theorem~\ref{T3 polyring}, 
all countable polyrings are $T_3$-topologizable. 
Proposition~4.2 in~\cite{Dutka Ivanov} states that 
all countable first-order models in a~relational 
language are topologizable by a~metrizable 
zero-dimensional topology.

\subsection{Topological classification}

Let us say that a~topological space~$X$ is 
{\it locally homeomorphic\/} to a~space~$Y$ 
iff any open $U\subseteq X$ includes some 
open $V\subseteq U$ homeomorphic to~$Y$.
The following questions are formulated for 
fields as even in this simplest case answers 
may appear to be unclear. 


\begin{prbl}\label{prb: classify fields}

Let $K$~be an infinite field. Provide a~topological 
classification of connected definable subspaces of~$K^n$ 
up to: (i)~homeomorphism; (ii)~local homeomorphism.
(E.g., do $K^2$ and $K^2\setminus A$ homeomorphic
if $A$~is a~singleton? a~proper closed subset of~$K^2$?) 

The same task for skew fields (rings, Abelian groups, 
groups, etc.).

%
\end{prbl}

\begin{prbl}\label{prb: roots fields loc homeom}

Is it true that for any polynomial
$F\in K[x_1,\ldots,x_n]$ over a~field~$K$
the set of its roots (as a~subspace of~$K^n$) 
is locally homeomorphic to the space~$K^m$ for some~$m$? 
(This is clear for $n\le2$; for $n=3$ the simple instance
which seems unclear is a~$2$-dimensional sphere in~$K^3$.) 

If the answer is affirmative, what about skew fields
(rings, Abelian groups, groups)?

\end{prbl}

\begin{prbl}\label{prb: embedding field}
Let $K$~be a~field. Is any space locally homeomorphic
to the space~$K^n$ embeddable into the space~$K^m$ 
for some $m\in\mathbb N$ (e.g.~into~$K^{2n+1}$)? 

If yes, what about skew fields
(rings, Abelian groups, groups)?

\end{prbl}

\begin{prbl}\label{prb: homeom quaternion}
Is the skew field~$\mathbb H$ (with the Zariski topology
given by polynomials in one variable) homeomorphic to 
the complex plane~$\mathbb C^2$ (with the Zariski topology
given by polynomials in two variables)? 
An analogous question about~$\mathbb H^n$, 
other Cayley--Dickson algebras.

\end{prbl}

\subsection{Miscellaneous}

\begin{prbl}\label{prb: Zariski spaces of groups}
Characterize topological spaces~$X$ which are 
the Zariski spaces of groups. 

An analogous question about quasigroups, fields, 
rings, etc. If $X$~is the Zariski space of a~quasigroup,
is it the Zariski space of a~group?
\end{prbl}

\begin{prbl}\label{prb: spaces of groups}
Characterize Hausdorff topological spaces~$X$ which are 
spaces of groups, i.e.~such that there is a~group structure 
on~$X$ with continuous group operations in this topology. 

An analogous question about quasigroups, fields, 
rings, etc. If a~Hausdorff space~$X$ is a~space of 
a~quasigroup, is it a~space of a~group?
\end{prbl}

\begin{prbl}\label{prb: Zariski on ultrafilters}
For the ultrafilter extension of the additive semigroup 
$(\mathbb N,+)$, study the smallest topology on 
$\scc\mathbb N$ which includes its standard topology and: 
(i)~makes all left shifts continuous;
(ii)~includes the Zariski topology of $(\scc\mathbb N,+)$. 
(Notice that in case~(i) the addition of ultrafilters 
becomes separately topological, thus turning 
$(\scc\mathbb N,+)$ into a~quasi-topological semigroup, 
and that this topology is included into one defined 
in case~(ii).)

Analogous questions about $(\mathbb N,+,\,\cdot\,)$, 
about ultrafilter extensions of other groups, rings, etc.
\end{prbl}

\begin{prbl}\label{prb: reverse}
Determine the proof-theoretic strength of (arithmetic 
versions of) the Finite Products Theorem for 
$(\mathbb N,\,\cdot\,)$, the simultaneous Finite Sums 
and Products Theorem for $(\mathbb N,+,\,\cdot\,)$ 
(Theorem~\ref{addit-multipl} here), 
their multidimensional generalizations 
(variants of Theorem~\ref{hindman general}). 
\end{prbl}

\begin{prbl}\label{prb: restricted Zariski}
Study restricted Zariski topologies given by a~set 
of terms, e.g.~the set of terms of degrees~$\le n$. 
Characterize algebras having such topologies non-discrete
for a~given~$n$. 
\end{prbl} 

For the case of groups, such topologies were studied 
in~\cite{Banakh Protasov}.

\begin{prbl}\label{prb: Omega-groups}
Can some of results on the Zariski topology of polyrings 
in Section~2 be reproved for Abelian $\Omega$-groups? 
(Cf.~Remark~\ref{Abelian}.)
\end{prbl}

\begin{prbl}\label{prb: groupoid rings}
With regard to Lemma~\ref{lm: expansion}, 
what is an interplay between properties of: 
\begin{itemize}
\setlength\itemsep{-0.39em}
\item[(i)]
a~given Abelian group and the most expanded 
polyring over it?
\item[(ii)]
a~given universal algebra and the polyring 
of formal sums generated by it? 
\end{itemize}
\end{prbl}

\begin{prbl}\label{prb: polyrings}
Generalize theory of spectra of ring to polyrings, 
characterize arising spectral spaces.
\end{prbl}

\subsection*{Acknowledgement.} 
I~am grateful to 
Dikran Dikranjan, Neil Hindman, and Olga Sipacheva 
for their useful comments, 
both scientific and historical.



\bibliographystyle{plain}

\begin{thebibliography}{111} 


\bibitem{Arnautov (countable rings)}
V.\,I.~Arnautov,
{\it Non-discrete topologizability of countable rings\/},
Doklady Akad. Nauk SSSR, 191:4 (1970), 747--750.

\bibitem{Arnautov (commutative rings)}
V.\,I.~Arnautov,
{\it Nondiscrete topologizability of infinite 
commutative rings\/},
Dokl. Akad. Nauk SSSR, 194:5 (1970), 991--994.

\bibitem{Arnautov (nontopologizable rings)}
V.\,I.~Arnautov,
{\it Example of an infinite ring admitting 
only discrete topologizations\/},
Mat. Issled., 5 (1970), 182--185.

\bibitem{Arnautov et al}
V.\,I.~Arnautov,
S.\,T.~Glavatsky, 
A.\,V.~Mikhalev,
{\it Introduction to the theory of 
topological rings and modules\/},
Marcel Dekker, N.\,Y., 1996.


\bibitem{Banakh Protasov}
T.\,O.~Banakh, I.\,V.~Protasov,
{\it Zariski topologies on groups\/}, 
Preprint, 2010, arxiv:1001.0601.

\bibitem{Banakh Protasov Sipacheva}
T.\,O.~Banakh, I.\,V.~Protasov, O.\,V.~Sipacheva, 
{\it Topologization of a~set endowed with 
an action of a~monoid\/}, 
Topology Appl. 160 (2013), 2104--2120.

\bibitem{Baranovich Burgin}
T.\,M.~Baranovich, M.\,S.~Burgin, 
{\em Linear $\Omega$-algebras}, 
Russian Math.~Surveys, 30:4 (1975), 65--113. 

\bibitem{Baumgartner}
J.\,E.~Baumgartner,
{\it A~short proof of Hindman's theorem\/}, 
Journal of Combinatorial Theory, Ser.~A, 
17:3 (1974), 384--386. 

\bibitem{Baumslag et al}
G.~Baumslag, 
A.\,G.~Myasnikov, 
V.\,N.~Remeslennikov,
{\em Algebraic geometry over groups. 
I:~Algebraic sets and ideal theory},
J.~Algebra 219 (1999), 16--79.

\bibitem{Bentz 2006}
W.~Bentz, 
{\em A~characterization of Hausdorff separation 
for a~special class of varieties},
Algebra Universalis 55 (2006), 259--276.

\bibitem{Bentz 2007}
W.~Bentz, 
{\em A~characterization of $T_3$-separation 
for a~special class of varieties}, 
Algebra universalis, 56 (2007), 399--410.

\bibitem{Bergelson Hindman}
V.~Bergelson, N.~Hindman, 
{\it Additive and multiplicative Ramsey Theorems 
in $\mathbb N$~-- some elementary results\/}, 
Combin. Probab. Comput. 2 (1993), 221–241. 

\bibitem{Blass et al}
A.\,R.~Blass, J.\,L.~Hirst, S.\,G.~Simpson, 
{\it Logical analysis of some theorems of 
combinatorics and topological dynamics\/}, 
in: Logic and Combinatorics (Arcata, CA, 1985), 125--156, 
Contemporary Math.~65, Amer. Math. Soc., 
Providence R.\,I., 1987.

\bibitem{Bryant}
R.~Bryant, 
{\it The verbal topology of a~group\/}, 
J.~Algebra 48:2 (1977), 340--346.


\bibitem{Burris Sankappanavar}
S.~Burris, H.\,P.~Sankappanavar, 
{\it A~course in universal algebra\/}, 
Millenium~ed., 2012~upd.


\bibitem{Carlucci et al}
L.~Carlucci, L.\,A.~Ko{\l}odziejczyk, 
F.~Lepore, K.~Zdanowski,
{\it New bounds on the strength of some 
restrictions of Hindman’s Theorem\/},
Preprint, 2017, arXiv:1701.06095.

\bibitem{Chein et al}
O.~Chein, H.~Pflugfelder, J.~Smith (eds.), 
{\em Quasigroups and Loops: Theory and Applications}, Heldermann-Verlag, Berlin, 1990.

\bibitem{Cohn} 
P.\,M.~Cohn, 
{\it Universal algebra}, 2nd ed., 
D.~Reidel Pub.~Co., Dordrecht--Boston, 1981. 

\bibitem{Coleman 1996} 
J.~Coleman, 
{\em Separation in topological algebras}, 
Algebra Universalis 35 (1996), 72--84.

\bibitem{Coleman 1997} 
J.~Coleman, 
{\em Topological equivalents to $n$-permutability}, 
Algebra Universalis 38 (1997), 200--209.

\bibitem{Comfort 1977} 
W.\,W.~Comfort,
{\it Ultrafilters: some old and some new results\/},
Bull. Amer. Math.~Soc., 
83:4 (1977), 417--455. 

\bibitem{Comfort Negrepontis} 
W.\,W.~Comfort, S.~Negrepontis,
{\it The theory of ultrafilters\/},
Springer, Berlin, 1974.


\bibitem{Daniyarova et al 2008}
E.\,Y.~Daniyarova, 
A.\,G.~Myasnikov, 
V.\,N.~Remeslennikov,
{\it Unification theorems in algebraic geometry\/}, 
in: B.~Fine et al.~(eds.), 
Aspects of infinite groups, 
World Sci., N.\,Y., 2008. 

\bibitem{Daniyarova et al 2011}
E.\,Y.~Daniyarova, 
A.\,G.~Myasnikov, 
V.\,N.~Remeslennikov,
{\it Universal algebraic geometry\/}, 
Doklady Akad. Nauk, Ser. Math., 439:6 (2011), 730--732. 

\bibitem{Dikranjan Shakhmatov 2008}
D.~Dikranjan, D.~Shakhmatov,
{\it Reflection principle characterizing groups in 
which unconditionally closed sets are algebraic\/},
J.~Group Theory 11 (2008), 421--442. 

\bibitem{Dikranjan Shakhmatov}
D.~Dikranjan, D.~Shakhmatov,
{\it The Markov--Zariski topology of an Abelian group\/},
J.~Algebra 324 (2010), 1125--1158.

\bibitem{Dikranjan Shakhmatov 2016}
D.~Dikranjan, D.~Shakhmatov,
{\it A~complete solution of Markov’s problem
on connected group topologies},
Advances in Mathematics 286 (2016), 286--307.

\bibitem{Dikranjan Toller}
D.~Dikranjan, D.~Toller,
{\it Markov's problems through the looking glass 
of Zariski and Markov topologis\/}, 
Ischia Group Theory 2010 Proc., 
World Scient. Publ., Singapore, 2011, 87--130. 

\bibitem{Dutka Ivanov}
J.~Dutka, A.~Ivanov, 
{\it Topologizable structures and Zariski topology\/}, 
Algebra Universalis 79:72 (2018) 
https://doi.org/10.1007/s00012-018-0557-1

\bibitem{Dzhafarov et al}
D.\,D.~Dzhafarov, C.\,G.~Jockusch (Jr.), 
R.~Solomon, L.\,B.~Westrick,
{\it Effectiveness of Hindman's theorem 
for bounded sums\/},
Preprint, 2016, arXiv:1603.08249.


\bibitem{Ellis}
R.~Ellis, 
{\it Lectures on topological dynamics\/}, 
Benjamin, N.\,Y., 1969.

\bibitem{Erdos et al}
P.~Erd\H{o}s, A.~Hajnal, A.~M\'{a}t\'{e}, R.~Rado,
{\it Combinatorial set theory: 
Partition relations for cardinals\/}, 
Akad. Kiad\'{o}, Budapest, North-Holland, Amst., 1984.



\bibitem{Graham et al}
R.~Graham, B.~Rothschild,  J.\,H.~Spencer,  
{\it Ramsey Theory\/}, 2nd ed., 
Wiley and Sons, N.\,Y., 1990.

\bibitem{Gratzer}
G.~Gr{\"a}tzer, 
{\it Universal algebra\/}, 2nd ed.,
Springer, N.\,Y., 1979 (upd.~2008). 

\bibitem{Gumm 1984}
H.\,P.~Gumm, 
{\em Topological implications 
in $n$-permutable varieties}, 
Algebra Universalis 19 (1984), 319--321.


\bibitem{Hanson} 
J.~Hanson,
{\it An infinite groupoid which admits only 
trivial topologies\/}, 
Amer. Math. Mon.~74 (1967), 568--569. 

\bibitem{Hesse}
G.~Hesse, 
{\it Zur Topologisierbarkeit von Gruppen\/}, 
Dissertation, Univ. Hannover, Hannover, 1979. 


\bibitem{Higgins 1956}
P.\,J.~Higgins, 
{\em Groups with multiple operators}, 
Proc.~London Math.~Soc., 6 (1956), 366--416.

\bibitem{Higgins}
P.\,J.~Higgins, 
{\it Categories and groupoids\/}, 
Van Nostrand Reinhold, 1971, 
republished in:
Reprints in Theory and Applications of Categories, 
7 (2005), 1--195.

\bibitem{Hilbert}
D.~Hilbert,
{\it \"{U}ber die Irreduzibilit\"{a}t ganzer rationaler
Funktionen mit ganzzahlingen Koeffizienten\/},
Journal f\"{u}r die reine und angewandte Mathematik, 
110 (1892), 104--129.

\bibitem{Hindman 1972}
N.~Hindman,
{\it The existence of certain ultrafilters on~$\mathbb N$ 
and a~conjecture of Graham and Rothschild\/}, 
Proc. Amer. Math.~Soc., 
36 (1972), 341--346. 

\bibitem{Hindman 1974}
N.~Hindman,
{\it Finite sums from sequences within cells 
of a~partition of~$\mathbb N$\/}, 
Journal of Combinatorial Theory, 
Ser.~A, 17:1 (1974), 1--11. 

\bibitem{Hindman 1979} 
N.~Hindman, 
{\it Partitions and sums and products of integers\/}, 
Trans. Amer. Math.~Soc.~247 (1979), 227--245.

\bibitem{Hindman 2005} 
N.~Hindman, 
{\it Algebra in the Stone-Cech compactification 
and its application to Ramsey Theory\/}, 
Sci.~Math.~Jpn. 62 (2005), 321--329.

\bibitem{Hindman Strauss} 
N.~Hindman, D.~Strauss,
{\it Algebra in the Stone--\v{C}ech compactification\/},
2nd~ed., revised and expanded, 
W.~de~Gruyter, Berlin--N.\,Y., 2012.



\bibitem{Jonsson Tarski} 
B.~J{\'o}nsson, A.~Tarski, 
{\it Boolean algebras with operators\/}, 
Part~I: Amer. J.~Math. 73:4 (1951), 891--939,
Part~II: ibid. 74:1 (1952), 127--162. 


\bibitem{Kearnes Sequeira}
K.~Kearnes, L.~Sequeira,
{\it Hausdorff properties of topological algebras\/},
Algebra Universalis, 47:4 (2002), 343--366.

\bibitem{Kertesz Szele}
A.~Kert\'{e}sz, T.~Szele, 
{\it On the existence of non-discrete topologies
in infinite Abelian groups\/}, 
Publ. Math. Debrecen, 3 (1953), 187--189.

\bibitem{Klyachko et al}
A.\,A.~Klyachko, A.\,Yu.~Olshanskii, D.\,V.~Osin, 
{\it On topologizable and non-topologizable groups\/}, 
Topology Appl.~160:16 (2013), 2104--2120.

\bibitem{Kotov}
M.\,V.~Kotov, 
{\it Topologizability of countable equationally 
Noetherian algebras\/}, 
Algebra and Logic,
52:2 (2013), 105--115.

\bibitem{Kurosh 1963}
A.\,G.~Kurosh, 
{\em Lectures on general algebra}, 
Chelsea (1963) (Translated from Russian)

\bibitem{Kurosh 1969}
A.\,G.~Kurosh, 
{\em Multioperator rings and algebras},
Russian Math.~Surveys, 24:1 (1969), 1--13. 

\bibitem{Kurosh 1974}
A.\,G.~Kurosh, 
{\em General algebra. 
Lectures for the academic year 1969/70}, 
Moscow, 1974 (in Russian).


\bibitem{Lipyanski}
R.~Lipyanski, 
{\it On the Zariski topology of $\Omega$-groups\/},
Contemporary Mathematics 726 (2019), 135--142. 


\bibitem{Maltsev}
A.\,I.~Maltsev,
{\it Free topological algebras\/},
Izvestiya Akad. Nauk SSSR, Ser.~Mat., 
21:2 (1957), 171--198.

\bibitem{Maltsev 1970}
A.\,I.~Maltsev,
{\it Algebraic systems\/},
Nauka, Moscow, 1970.

\bibitem{Markov 1944}
A.\,A.~Markov,
{\it On unconditionally closed sets\/},
Doklady Akad. Nauk SSSR 44:5 (1944), 196--197; 
English transl.: 
C.\,R.~Acad. Sci.~URSS (N.\,S.), 
44 (1944), 180--181.

\bibitem{Markov 1945}
A.\,A.~Markov,
{\it On free topological groups\/}, 
Izv. Akad. Nauk SSSR, Ser.~Mat. 9:1 (1945), 3--64.

\bibitem{Markov 1946}
A.\,A.~Markov,
{\it On unconditionally closed sets\/}, 
Mat.~Sbornik 18:1 (1946), 3--28. 


\bibitem{Montgomery Zippin}
D.~Montgomery, L.~Zippin, 
{\em Topological Transformation Groups}, 
Interscience Tracts in Pure and Applied Mathematics, 
Tract~1, Interscience Publishers, New York, 1955.



\bibitem{Olshanskii}
A.\,Yu.~Olshanskii, 
{\it A~remark on a~countable nontopologized group\/}, 
Vestnik Moscow Univ. Ser.~I~Mat.~Mekh., 3 (1980), 103. 


\bibitem{Palyutin et al}
E.\,A.~Palyutin, D.\,G.~Seese, A.\,D.~Taimanov,
{\it A~remark on the topologization of algebraic 
structures\/},
Rev. Roum. Math. Pures Appl., 26:4 (1981), 617--618.

\bibitem{Plotkin 1996}
B.\,I.~Plotkin, 
{\it Varieties of algebras and 
algebraic varieties\/}, 
Israel J.~Math., 96:2 (1996), 511--522.

\bibitem{Plotkin 1997}
B.\,I.~Plotkin, 
{\it Some concepts of algebraic geometry 
in universal algebra\/}, 
Algebra i~Analiz, 9:4 (1997), 224--248; 
English transl.: 
St.~Petersburg Math.~J., 9:4 (1998), 859--879.

\bibitem{Podewski}
K.-P.~Podewski, 
{\it Topologisierung algebraischer Structuren\/}, 
Rev. Roum. Math. Pures Appl., 22:9 (1977), 1283--1290. 

\bibitem{Poliakov Saveliev 2021}
N.\,L.~Poliakov, D.\,I.~Saveliev, 
{\it On ultrafilter extensions of first-order models 
and ultrafilter interpretations\/}, 
Archive for Mathematical Logic 60 (2021), 625--681. 

\bibitem{Protasov}
I.\,V.~Protasov,
{\it Combinatorics of numbers\/},
Math. Studies~2, VNTL, Lviv, 1997.



\bibitem{Ramsey}
F.\,P.~Ramsey, 
{\it On a~problem of formal logic\/}, 
Proceedings of the London Mathematical Society, 
Ser.~2, 30 (1930), 264--286.


\bibitem{Saveliev} 
D.\,I.~Saveliev,
{\it Ultrafilter extensions of models.\/}
Lecture Notes in AICS 6521 (2011), 162--177. 

\bibitem{Saveliev (inftyproc)} 
D.\,I.~Saveliev,
{\it On ultrafilter extensions of models\/},
in: S.-D.~Friedman et~al.~(eds.),
The Infinity Project Proc., 
CRM~Documents~11, Barcelona, 2012, 599--616. 

\bibitem{Saveliev (idempotents)} 
D.\,I.~Saveliev,
{\it On idempotents in compact left topological 
universal algebras\/}, 
Topology Proc.~43 (2014), 37--46.

\bibitem{Saveliev (Hindman)} 
D.~I.~Saveliev,
{\it On Hindman sets\/},
Preprint,~2008. 

\bibitem{Saveliev (Zariski)} 
D.\,I.~Saveliev,
{\it On Zariski topologies on polyrings\/}, 
Uspekhi Mat.~Nauk, 72:4 (436) (2017), 195--196;
English transl.:
Russian Math.~Surveys, 72:4 (2017), 770--772. 

\bibitem{Schur}
I.~Schur,
{\it {\"U}ber die Kongruenz $x^m+y^m=z^m\pmod p$\/},
Jahresbericht der Deutschen Mathematiker-Vereinigung, 
25 (1916), 114--117.

\bibitem{Shelah}
S.~Shelah, 
{\em On a~problem of Kurosh, J{\'o}nsson groups, 
and applications}, 
in: Word Problems, II, conf.~on decision problems 
in algebra, Oxford, 1976.
Stud. Logic Found. Math.~95, 373--394,
North-Holland, Amst.--N.\,Y., 1980.



\bibitem{Tachtsis}
E.~Tachtsis, 
{\it On the set-theoretic strength of Ellis' 
theorem and the existence of free idempotent 
ultrafilters on~$\omega$\/},
J.~Symbolic Logic, 83:2 (2018), 551--571.

\bibitem{Taimanov semi} 
A.\,D.~Taimanov,
{\it An example of a~semigroup admitting only 
the discrete topology\/}, 
Algebra and Logic 12:1 (1973), 114--116.

\bibitem{Taimanov}
A.\,D.~Taimanov, 
{\it On topologizability of countable algebras\/}, 
Doklady Akad. Nauk SSSR, 243:2 (1978), 284--286. 

\bibitem{Taylor 1977}
W.~Taylor, 
{\em Varieties of topological algebras}, 
J.~Austral. Math.~Soc. (series~A), 
23 (1977), 207--241.

\bibitem{Towsner}
H.~Towsner, 
{\it A~simple proof and some difficult examples 
for Hindman’s Theorem\/}, 
Notre Dame J. Formal Logic, 53:1 (2012), 53--65.




\bibitem{Whitehead} 
A.\,N.~Whitehead, 
{\it A~treatise on universal algebra: with applications\/}, 
Cambridge, The University Press, 1898.




\bibitem{Zelenyuk}
Y.\,G.~Zelenyuk,
{\it On topologizing groups\/}, 
J.~Group Theory 10:2 (2007), 235--244. 

\bibitem{Zelenyuk 2011}
Y.\,G.~Zelenyuk,
{\it Ultrafilters and topologies on groups\/}, 
W.~de~Gruyter, Expos. Math.~50, Berlin, 2011.

\bibitem{Zilber}
B.~Zilber, 
{\it Zariski geometries.
Geometry from the logician's point of view\/}, 
London Math. Soc. Lecture Note Ser.~360,
Cambridge Univ.~Press, 2010.

\end{thebibliography}

\vskip+2em
\begin{footnotesize} 
\noindent
{\sc
Higher School of Modern Mathematics MIPT 
\/}
\\
{\it E-mail address:\/} 
d.i.saveliev@gmail.com 
\end{footnotesize} 

\hide{
\vskip+2em
\begin{footnotesize} 
\noindent
{\sc
The Russian Academy of Sciences, 
Steklov Mathematical Institute, 
Gubkina street~8, 
Moscow 119991 Russia
\/}
\\
{\it E-mail address:\/} 
d.i.saveliev@gmail.com 
\end{footnotesize} 
}

\end{document}